\newcommand{\remove}[1]{}
\newtheorem*{theoremNoNum}{Theorem}
\newtheorem{theorem}{Theorem}[section]
\newtheorem{thm}{Theorem}[section]
\newtheorem{lemma}[thm]{Lemma}
\newtheorem{definition}[thm]{Definition}
\newtheorem{corollary}[thm]{Corollary}
\newtheorem{introthm}{Theorem}
\newtheorem{remark}[thm]{Remark}
\def\E{{\mathbb{E}}}
\def\Z{{\mathbb{Z}}}
\def\F{{\mathbb{F}}}
\def\R{{\mathbb{R}}}
\def\bias{{\mathrm{bias}}}
\def\poly{{\mathrm{poly}}}
\def\rank{{\mathrm{rank}}}
\def\chr{{\mathrm{char}}}
\newcommand{\eqdef}{\triangleq}
\newcommand{\ignore}[1]{}
\begin{document}
%\selectlanguage{english}%

\title{On the Structure of Cubic and Quartic Polynomials}

\author{Elad Haramaty\thanks{Faculty of Computer Science, The Technion, Haifa,
Israel.
 {\tt eladh,shpilka@cs.technion.ac.il}. Research
supported by the Israel Science Foundation (grant number 439/06).}
\and Amir Shpilka\footnotemark[1]}

\maketitle

\begin{abstract}
In this paper we study the structure of polynomials of degree
three and four that have high bias or high Gowers norm, over
arbitrary prime fields. In particular we obtain the following
results.
\begin{enumerate}
\item\label{abs:bias 3} Let $f$ be a degree three polynomial with
$\bias(f)=\delta$ then there exist $r=O(\log(1/\delta))$ quadratic
polynomials $\{q_i\}$, $c=O(\log^4(\frac{1}{\delta}))$ linear
functions $\{\ell_i\}$ and a degree three polynomial $g$ such that
$f = \sum_{i=1}^r \ell_i\cdot q_i + g(\ell_1,\ldots,\ell_c)\;.$
This result generalizes the corresponding result for quadratic
polynomials.

\item\label{abs:bias 4} Let $\deg(f)=4$ and $\bias(f)=\delta$.
Then $f = \sum_{i=1}^{r} \ell_i \cdot g_i + \sum_{i=1}^r q_i \cdot
q'_i\;,$ where $r=\poly(1/\delta)$, the $\ell_i$-s are linear, the
$q_i$-s are quadratics and the $g_i$-s are cubic.

\item\label{abs:gowers 4} Let $\deg(f)=4$ and
$\|f\|_{U^4}=\delta$. Then there exists a partition of a subspace
$V \subseteq \F^n$, $\dim(V)\geq n-O(\log(1/\delta))$, to
subspaces $\{V_\alpha\}$, such that $\forall
\alpha\;\dim(V_\alpha) \geq n/\exp(\log^2(1/\delta))$ and
$\deg(f|_{V_\alpha})=3$.
\end{enumerate}

Items~\ref{abs:bias 3},\ref{abs:bias 4} extend and improve
previous results for degree three and four polynomials
\cite{KaufmanLovett08,GreenTao07}. Item~\ref{abs:gowers 4} gives a
new result for the case of degree four polynomials with high $U^4$
norm. It is the first case where the inverse conjecture for the
Gowers norm fails \cite{LMS08,GreenTao07}, namely that such an $f$
is not necessarily correlated with a cubic polynomial. Our result
shows that instead $f$ equals a cubic polynomial on a large
subspace (in fact we show that a much stronger claim holds).

\ignore{ Results~\ref{abs:bias 3},\ref{abs:bias 4} improves and
extends previous results of \cite{KaufmanLovett08}. In particular,
previous to our results it was known that $f=F(g_1,\ldots,g_c)$
where $\deg(g_i)<\deg(f)$, but $c$ was much larger and no nice
structural theorem was known. In particular for degree three
polynomials $c_3 = \exp(1/\delta)$ and for degree four  $c_4$ is a
tower of height $\exp(1/\delta)$. Result~\ref{abs:gowers 3} is a
special case of Gowers inverse theorem for $U^3$. In general it is
known \cite{Samorodnitsky07,GreenTao07} that if $\|f\|_{U^3}$ is
high then $f$ is correlated with a quadratic. Here we get a more
accurate result for degree three polynomials.
Result~\ref{abs:gowers 4} treats the case of degree four
polynomials with high $U^4$ norm. It is the first case where the
inverse conjecture for the Gowers norm fails
\cite{LMS08,GreenTao07}, namely that such an $f$ is not
necessarily correlated with a cubic polynomial. Our result shows
that instead $f$ equals a cubic polynomial on a large subspace (in
fact a much stronger result holds). }

Our techniques are based on finding a structure in the space of
partial derivatives of $f$. For example, when $\deg(f)=4$ and $f$
has high $U^4$ norm we show that there exist quadratic polynomials
$\{q_i\}_{i \in [r]}$ and linear functions $\{\ell_i\}_{i \in
[R]}$ such that (on a large enough subspace) every partial
derivative of $f$ can be written as $\Delta_y(f) = \sum_{i=1}^{R}
\ell_i \cdot q_i^y + \sum_{i=1}^r q_i \cdot \ell_i^y + q_0^y$,
where $\ell_i^y,q_i^y$ depend on $y$, the direction of the partial
derivative, $r=O(\log^2(1/\|f\|_{U_4}))$ and $R=\exp(r)$.

\end{abstract}

\thispagestyle{empty}

\newpage

\pagenumbering{arabic}

\section{Introduction}

Assume that we are given a degree $d$ polynomial $f$ that, in some
sense, `behaves' differently from a random degree $d$ polynomial.
Is there anything that we can deduce about the structure of $f$
just by knowing this fact? Recently this question received a lot
of attention, where the `behavior' of $f$ was examined with
respect to its bias or the more general notion of the Gowers norm.

\begin{definition}\label{def:bias}
Let $f:\F^n \to \F$ be a function. The bias of $f$ is defined as
$$\bias(f) = \left|\E_{\bar{a} \in \F^n}[\omega^{f(\bar{a})}]\right| \;,$$
where $\omega = e^{\frac{2 \pi i}{|\F|}}$ is a complex primitive
root of unity of order $|\F|$.
\end{definition}

Intuitively, the bias of $f$ measures how far is the distribution
induced by $f$ from the uniform distribution. We expect a random
polynomial to have a vanishing small bias (as a function of the
number of variables), so it is interesting to know what can be
said when the bias is not too small. Indeed, Green and Tao
\cite{GreenTao07} showed that if $f$ is a degree $d$ polynomial
over $\F$, such that $d < |\F|$, and $\bias(f)=\delta$ then $f$
can be written as a function of a small number of lower degree
polynomial. Formally, $f(x) = F(g_1,\ldots,g_{c_d})$ for some
function $F$ and $c_d=c_d(\bias(f),|\F|)$ polynomials $\{g_i\}$
satisfying $\deg(g_i)<d$. Note that $c_d=c_d(\bias(f),|\F|)$ does
not depend on the number of variables, i.e. it is some constant.
This result was later extended by Kaufman and Lovett
\cite{KaufmanLovett08} to arbitrary finite fields (i.e. without
the restriction $d<|\F|$). Thus, if $f$ has a noticeable bias,
unlike a random degree $d$ polynomial, then $f$ is in fact very
far from being random; simple counting arguments show that most
degree $d$ polynomials cannot be represented as functions of a few
lower degree polynomials. This result is also interesting as it
gives an average case - worst case reduction. Namely, if $f$ has
correlation $\delta$ with a lower degree polynomial then it is a
function of a small number of lower degree polynomials. One
drawback of the results of \cite{GreenTao07,KaufmanLovett08} is
the dependance of the number of lower degree polynomials on the
bias of $f$. In particular when $\deg(f)=3$,
\cite{GreenTao07,KaufmanLovett08} get the bound $c_3 =
\exp(\poly(1/\bias(f)))$ and for $\deg(f)=4$ they
bound\footnote{These numbers are not explicitly computed there,
but this is what the recursive arguments in the papers imply.}
$c_4$ by a tower of height $c_3$. On the other hand if $\deg(f)=2$
and $\bias(f) = \delta$ then it is known that $f$ can be written
as a function of at most $2\log(1/\delta)+1$ linear functions.
This can be immediately deduced from the following well known
theorem.

%Dickson's theorem \cite{LidlNiederreiter}.

\begin{theorem}[Structure of quadratic polynomials]\label{thm:Dickson}
(Theorems 6.21 and 6.30 in \cite{LN97}). For every quadratic
polynomial $f: \F^n \to \F$ over a prime field $\F$ there exists
an invertible linear transformation $T$, a linear polynomial
$\ell$, and field elements $\alpha_1, \ldots \alpha_n$ (some of
which may be $0$) such that:
\begin{enumerate}
\item If $\chr(\F) = 2$ then $(q \circ T)(x) = \sum_{1=i}^{\lfloor
n/2\rfloor} \alpha_i \cdot x_{2i-1} \cdot x_{2i} + \ell(x)$,

\item If $\chr(\F)$ is odd then $(q \circ T)(x) = \sum_{1=i}^n
\alpha_i \cdot x_i^2 + \ell(x)$.
\end{enumerate}
Moreover, the number of non zero $\alpha_i$-s is invariant and
depends only on $f$.
\end{theorem}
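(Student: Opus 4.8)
The plan is to first strip off the lower-degree part and then normalize the homogeneous quadratic part, treating the two characteristics separately. Write $f = Q + \ell_0$, where $Q$ is the homogeneous degree-two part of $f$ and $\ell_0$ gathers all monomials of degree at most one. Since for any invertible linear $T$ one has $f\circ T = (Q\circ T) + (\ell_0\circ T)$ and $\ell_0\circ T$ is again of degree at most one, it suffices to choose $T$ so that $Q\circ T$ has the claimed shape; whatever degree-$\le 1$ remainder this produces gets absorbed, together with $\ell_0\circ T$, into the final $\ell$ (it is affine, but its constant term plays no role). To $Q$ I attach its polarization $B(x,y)\eqdef Q(x+y)-Q(x)-Q(y)$, a symmetric bilinear form; because $B(Tx,Ty)$ is precisely the polarization of $Q\circ T$, the quantity $\rank(B)$ is invariant under the admissible changes of variable, and the whole ``Moreover'' clause will follow once I show that the number of nonzero $\alpha_i$'s produced equals $\rank(B)$ when $\chr(\F)\ne 2$ and $\tfrac12\rank(B)$ when $\chr(\F)=2$.

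For $\chr(\F)$ odd, the map $Q\mapsto B$ is a bijection (one recovers $Q(x)=\tfrac12 B(x,x)$), so it is enough to diagonalize the symmetric form $B$. I would run the classical orthogonalization: if $B\not\equiv 0$, then some $v$ satisfies $B(v,v)\neq 0$ --- otherwise $0 = B(u+w,u+w) = 2B(u,w)$ for all $u,w$ forces $B\equiv 0$ --- so split $\F^n = \langle v\rangle \oplus v^{\perp}$, where $v^{\perp}=\{u: B(v,u)=0\}$ has codimension one, and recurse on $v^{\perp}$. This yields a basis in which $B$ is the diagonal form $\mathrm{diag}(\alpha_1,\dots,\alpha_r,0,\dots,0)$ with all $\alpha_i\neq 0$ and $r=\rank(B)$; on the corresponding complement $Q$ vanishes identically (its polarization is zero and $Q(w)=\tfrac12 B(w,w)$), and the cross terms vanish by orthogonality, so $Q\circ T = \sum_{i=1}^n \alpha_i x_i^2$ exactly, as required.

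For $\chr(\F)=2$ --- so $\F=\F_2$ --- the form $B$ is now alternating ($B(v,v)=Q(0)=0$) and no longer determines $Q$, which is the one real subtlety of the argument. I would still normalize $B$: pick $u,w$ with $B(u,w)=1$, observe that the plane $H=\langle u,w\rangle$ is nondegenerate for $B$ so that $\F^n = H\oplus H^{\perp}$, and recurse on $H^{\perp}$; this produces a basis in which $B(x,y)=\sum_{i=1}^{r}(x_{2i-1}y_{2i}+x_{2i}y_{2i-1})$ with $r=\tfrac12\rank(B)$. Now set $Q'\eqdef Q\circ T - \sum_{i=1}^{r} x_{2i-1}x_{2i}$: its polarization is $B-B=0$, and a direct check shows that over $\F_2$ any quadratic polynomial whose polarization vanishes is a linear function (each cross-term $x_ix_j$ contributes nontrivially to the polarization, while $x_i^2 = x_i$). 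Hence $Q\circ T = \sum_{i=1}^{r} x_{2i-1}x_{2i} + \ell_1$ for a linear $\ell_1$, giving $\alpha_i=1$ for $i\le r$ and $\alpha_i=0$ otherwise, i.e. exactly $r=\tfrac12\rank(B)$ nonzero coefficients.

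The main obstacle, as indicated, is the characteristic-two case: there the polarization loses information, so one must both set up the alternating (symplectic) normal form for $B$ and separately pin down the discrepancy $Q'$ via the observation that zero-polarization quadratics over $\F_2$ are linear. The odd-characteristic half is the standard diagonalization of quadratic forms and is routine, and the invariance statement is immediate once the count of nonzero $\alpha_i$'s has been tied to $\rank(B)$.
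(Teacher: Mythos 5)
The paper does not supply a proof of this statement; it is imported verbatim from Lidl--Niederreiter (Theorems 6.21 and 6.30), so there is no internal argument to compare against. Your proof is correct and is essentially the standard one behind that citation: strip off the degree-$\le 1$ part, normalize the homogeneous quadratic $Q$ via its polarization $B$, diagonalize $B$ by orthogonalization in odd characteristic (using $Q=\tfrac12 B(x,x)$ to recover $Q$), and pass to a symplectic basis in characteristic two, there supplementing with the observation that a quadratic over $\F_2$ with vanishing polarization is affine. The ``moreover'' clause is correctly reduced to the invariance of $\rank(B)$ under change of basis, with the count of nonzero $\alpha_i$'s equal to $\rank(B)$ (odd characteristic) or $\tfrac12\rank(B)$ (characteristic two). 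No gaps.
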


We thus see that there is a sharp contrast between the result for
quadratic polynomials and the results for polynomials of degrees
as low as three or four. We also note that the results of Kaufman
and Lovett only guarantee that $f$ can be represented as $f(x) =
F(g_1,\ldots,g_c)$ but no nice structure like the one in
Theorem~\ref{thm:Dickson} is known. It is thus an intriguing
question whether a nice structural theorem exists for biased
polynomials and what is the correct dependance of the number of
lower degree polynomials on $\deg(f)$ and $\bias(f)$.

As mentioned above, a more general measure of randomness that was
considered is the so called Gowers norm of $f$. Intuitively, the
$U^d$ Gower norm tests whether $f$ behaves like a degree $d-1$
polynomial on $d$ dimensional subspaces. To define the Gowers norm
we first define the notion of a discrete partial derivative.

\begin{definition}(Discrete partial derivative)\label{def:partial
derivative} For a function $f:\F^n \to \F$ and a direction $y \in
\F^n$ we define $\Delta_y(f)(x) \eqdef f(x+y) - f(x)$ to be the
discrete partial derivative of $f$ in direction $y$ at the point
$x$.
\end{definition}

It is not difficult to see that if $\deg(f)=d$ then for every $y$,
$\deg(\Delta_y(f)) \leq d-1$. We now define the $d$-th Gower norm
of a function $f$.

\begin{definition}[Gowers norm \cite{AKKLR,Gowers98,Gowers01}]
The $d$-th Gower norm, $U^d$, of $f$ is defined as $$\|f\|_{U^d}
\eqdef
|\E_{x,y_1,\ldots,y_d}[\omega^{\Delta_{y_d}\ldots\Delta_{y_1}(f)(x)}]|^{1/2^d}\;,$$
where again $\omega = e^{\frac{2\pi i}{|\F|}}$.
\end{definition}

Note that $\|f\|_{U^0}=\|f\|_{U^1}=\bias(f)$. It is also clear
that if $\deg(f)=d-1$ then $\|f\|_{U^d}=1$. For more properties of
the Gowers norm we refer the reader to
\cite{Gowers98,Gowers01,GreenTao08,Samorodnitsky07,ViolaW07}.

In \cite{AKKLR} Alon et. al. showed that if
$\|f\|_{U^d}>1-\delta$, for some small $\delta$, then $f$ can be
well approximated by a degree $d-1$ polynomial. This raises the
question whether any function that has a noticeable $U^d$ norm is
somewhat correlated with a lower degree polynomial and indeed in
\cite{Samorodnitsky07,GreenTao08} this was conjectured to be the
case. This conjecture has become known as the inverse conjecture
for the Gowers norm. Samorodnitsky \cite{Samorodnitsky07} proved
that if $\|f\|_{U^3} = \delta$ where $f:\F_2^n\to \F_2$ is an
arbitrary function, then $f$ has an exponentially high (in
$\delta$) correlation with a quadratic polynomial. Namely, there
exists a quadratic polynomial $q$ such that $\Pr_{x \in
\F_2^n}[f(x) = q(X)] \geq 1/2 + \exp(-\poly(1/\delta))$. Green and
Tao \cite{GreenTao08} obtained a similar result for fields of odd
characteristic.  These results gave an affirmative answer for the
case of the $U^3$ norm. More generally, Green and Tao proved that
if $d < |\F|$ and $f$ is a degree $d$ polynomial with a high $U^d$
norm then $f$ is indeed correlated with a lower degree polynomial
\cite{GreenTao07}. Recently, the case of large characteristic was
solved by Tao and Ziegler \cite{TaoZiegler09}.\footnote{In
fact,\cite{TaoZiegler09} only get a qualitative result. No
explicit connection is known between the Gowers norm and the
correlation with polynomials.} Using ideas from ergodic theory and
the earlier \cite{BTZ09} they proved that if $f:\F^n\to
\mathcal{D}$ (where $\mathcal D$ is the unit disk in $\mathbb{C}$)
is a function with high $U^d$ norm and $d\leq |\F|$ then $f$ is
correlated with a degree $d-1$ phase polynomial.\footnote{A degree
$d-1$ phase polynomial is a function of the form $e^{2\pi i
\theta}\omega^g$, for some degree $d-1-(p-1)t$ polynomial $g$
where $\theta \in [0,1]$ and $\omega = e^{2\pi i /|\F|^t}$.} This
completely settled the conjecture for the case $d \leq |\F|$. On
the other hand, for the $U^4$ norm it was shown, independently, by
Lovett, Meshulam and Samorodnitsky \cite{LMS08} and by Green and
Tao \cite{GreenTao07} that no such result is possible when
$\F=\F_2$. Namely, \cite{LMS08} proved that the symmetric
polynomial $S_4(x_1,\ldots,x_n) \eqdef \sum_{T \subset [n], |T|=4}
\prod_{i\in T}x_i$, which is of degree four, has a high $U^4$ norm
but has an exponentially (in $n$) small correlation with any lower
degree polynomial. Similar examples where given for other fields
(when $d$ is large enough compared to the size of the field).
These examples show that for small fields the inverse conjecture
for the Gowers norm is not true in its current form. In their
work, Tao and Ziegler \cite{TaoZiegler09} proved a variant of the
conjecture for the case $d\leq |\F|$. Namely, that if a function
$f$ has high $U^d$ norm then $f$ is correlated with a phase
polynomial of a certain constant degree (but not necessarily
smaller than $d$). We note however, that if $\deg(f)=d$ then the
results of \cite{TaoZiegler09} do not give any information on $f$.
In fact, even if $\deg(f)=4$ and $f$ has a high $U^4$ norm then
nothing is known on the structure of $f$. It is thus a very
interesting question to understand the structure of low degree
polynomials having high Gowers norm over small fields.

Besides being natural questions on the own, results on the Gowers
norms had many applications in mathematics and computer science.
In his seminal work on finding arithmetic progressions in dense
sets, Gowers first defined the $U^d$ norm (for functions from
$\Z_n$ to $\Z_n$) and proved an inverse theorem for them that was
instrumental in his proofs \cite{Gowers98,Gowers01}. Bogdanov and
Viola \cite{BogdanovViola07} attempt for constructing a pseudo
random generator for constant degree polynomials relied on the
(erroneous) inverse conjecture for the Gowers norm, yet it paved
the way for other papers solving the problem
\cite{Lovett08,Viola08}. In \cite{SamorodnitskyTrevisan06}
applications of an inverse theorem for the Gowers norm to PCP
constructions was given. Samorodnitsky's proof of the inverse
theorem for the $U^3$ norm \cite{Samorodnitsky07} implies a low
degree test distinguishing quadratic functions from those that do
not have a non trivial correlation with a quadratic function. This
result also gives a test for checking the distance of a given word
from the 2nd order Reed-Muller code, beyond the list decoding
radius. For a more elaborate discussion of the connection between
additive combinatorics and computer science see \cite{Trevisan09}.

\subsection{Our results}

In this work we are able to show analogs of
Theorem~\ref{thm:Dickson} for polynomials of degree three and
four. We also prove a structural result for the case that such a
polynomial has a high Gowers norm. Our first main result is the
following.

\begin{introthm}\label{thm:intro:deg-3:bias}(biased cubic
polynomials)\sloppy Let $\F$ be a finite field and $f \in
\F[x_1,\ldots,x_n]$ a cubic polynomial ($\deg(f)=3$) such that
$\bias(f)=\delta$. Then there exist $c_1 = O(\log (1/\delta))$
quadratic polynomials $q_1,\ldots,q_{c_1} \in \F[x_1,\ldots,x_n]$
and linear functions $\ell_1,\ldots,\ell_{c_1} \in
\F[x_1,\ldots,x_n]$ and another $c_2 =
O(\log^4(\frac{1}{\delta}))$ linear functions
$\ell'_1,\ldots,\ell'_{c_2} \in \F[x_1,\ldots,x_n]$ such that $f =
\sum_{j=1}^{c_1} \ell_j \cdot q_j +
g(\ell'_1,\ldots,\ell'_{c_2})$, where $g$ is cubic.
\end{introthm}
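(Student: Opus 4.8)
The plan is to study the space of partial derivatives $\{\Delta_y f\}_{y \in \F^n}$, which are quadratic polynomials, and leverage the classification of quadratics from Theorem~\ref{thm:Dickson}. Since $\bias(f) = \delta$, a standard manipulation of the bias expression shows that $\|f\|_{U^2}^4 = \E_y[\bias(\Delta_y f)^2]$ is bounded below by $\delta^4$ (or some fixed power of $\delta$); hence for a noticeably large fraction of directions $y$, the quadratic $\Delta_y f$ has bias at least $\poly(\delta)$. By Theorem~\ref{thm:Dickson}, a quadratic of bias $\beta$ has rank (in the sense of the number of nonzero $\alpha_i$, i.e. the rank of the associated bilinear form) at most $O(\log(1/\beta))$. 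So for many $y$, $\Delta_y f$ is a function of $O(\log(1/\delta))$ linear forms, i.e. $\Delta_y f$ lives in a low-dimensional "linear part." The first main step is to argue that in fact \emph{all} derivatives have bounded rank, not just a large fraction: using the additivity relation $\Delta_{y+y'} f = \Delta_y f + \Delta_{y'}(\Delta_y f) + \Delta_{y'} f$ together with the fact that $\Delta_{y'}(\Delta_y f)$ is linear in $x$, one can transfer the low-rank property from a generic pair $y, y'$ to their sum, and a covering/averaging argument then shows $\rank(\Delta_y f) = O(\log(1/\delta))$ for every $y$ (up to adjusting constants).

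Next I would extract a \emph{global} linear structure. Each $\Delta_y f$, being quadratic, has a symmetric bilinear form $B_y(u,v)$ of bounded rank $r = O(\log(1/\delta))$; moreover $B_y$ is itself linear in $y$ (the map $y \mapsto B_y$ comes from the trilinear form associated to the cubic $f$). A bilinear-forms analogue of the argument shows that the union of the (bounded-dimensional) images of all the $B_y$ spans a subspace $W \subseteq (\F^n)^*$ of dimension $O(\log^2(1/\delta))$ — essentially because a "generic" derivative already determines, via linearity in $y$, the row spaces of a spanning set of the $B_y$'s. Working modulo the linear forms spanning $W$ (i.e. after an invertible linear change of coordinates isolating these $c_2 = O(\log^4(1/\delta))$ coordinates — here the larger exponent absorbs the cost of iterating the rank bound and of the symmetric-square/char-$2$ subtleties), every partial derivative $\Delta_y f$ depends only on those coordinates plus, possibly, an affine (linear-in-$x$) term; equivalently the "genuinely cubic part" of $f$ only involves those coordinates. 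That is exactly the assertion that $f - g(\ell'_1,\dots,\ell'_{c_2})$ has all its pure-degree-$3$ monomials supported on variables outside $W$, so after removing $g$ the remainder is a cubic whose cubic part has a bounded-rank trilinear form.

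The final step handles $\sum_j \ell_j q_j$. After subtracting $g(\ell'_1,\dots,\ell'_{c_2})$ we are left with a cubic $h$ whose associated trilinear form $T(u,v,w)$ has the property that, say, contracting in one argument always lands in the fixed $r$-dimensional space — so $T$ factors through $\F^n \to \F^r$ in at least one slot, which means $h = \sum_{i=1}^{r} \ell_i \cdot q_i + (\text{lower order})$ for linear forms $\ell_1,\dots,\ell_r$ (a basis dual to that $\F^r$) and quadratics $q_i$; the lower-order terms of degree $\le 2$ can be folded into the $q_i$'s or into $g$. Collecting, $f = \sum_{j=1}^{c_1}\ell_j q_j + g(\ell'_1,\dots,\ell'_{c_2})$ with $c_1 = O(\log(1/\delta))$ and $c_2 = O(\log^4(1/\delta))$, as required. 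The main obstacle I anticipate is the passage from "low rank for a large fraction of directions" to "low rank for \emph{every} direction, with a controlled global linear span": naively the span of all derivative-images could be large, and one must carefully use the bilinearity/trilinearity (linearity of $B_y$ in $y$) together with an iterative rank-reduction, which is precisely what forces the $\log^4$ rather than $\log^2$ bound; the characteristic-$2$ case of Theorem~\ref{thm:Dickson} (where one works with the alternating form and must separately track the diagonal linear term) adds a layer of bookkeeping but no essential new difficulty.
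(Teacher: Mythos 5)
Your high-level outline (derivatives of $f$ have low $\rank_2$ $\Rightarrow$ the derivatives share a small common set of linear forms $\Rightarrow$ decomposition) matches the paper's strategy, but two substantive gaps remain.

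First, the claim that a covering/averaging argument yields $\rank_2(\Delta_y f) = O(\log(1/\delta))$ for \emph{every} $y$ (or every $y$ in a subspace of bounded codimension) is too strong. What the bias hypothesis gives directly is a $\poly(\delta)$ fraction of directions with rank $O(\log(1/\delta))$. To pass from a dense set of good directions to all directions on a subspace one uses the Bogolyubov--Chang lemma: the set $A$ of low-rank directions has some $kA-kA$ containing a subspace of codimension $O(\log(1/\delta))$, and by subadditivity of $\rank_2(\Delta_y f)$ (Lemma~\ref{lem:subadditivity of derivatives}) each $y$ there has rank $\le 2kr$. Since $A$ has density only $\poly(\delta)$, one must take $k=\Theta(\log(1/\delta))$, so the resulting bound is $O(\log^2(1/\delta))$ --- it is not $O(\log(1/\delta))$, and there is no obvious way to avoid the extra factor. (Your appeal to the cocycle identity $\Delta_{y+y'}f = \Delta_y f + \Delta_{y'} f + \Delta_{y'}\Delta_y f$ is the subadditivity fact itself; it does not let you write an arbitrary $y$ as a sum of $O(1)$ good directions.) Consequently, your later contraction step would give $\Theta(\log^2(1/\delta))$ quadratics $\ell_j q_j$, not $O(\log(1/\delta))$ as the theorem claims.

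Second --- and this is the key idea your sketch is missing --- the reduction from $O(\log^2(1/\delta))$ quadratics down to $c_1 = O(\log(1/\delta))$ is not a rank-of-derivatives argument at all; it is a second, direct use of the bias hypothesis. The paper first gets $f = \sum_{i=1}^t \ell_i q_i + q_0$ with $t=O(\log^2(1/\delta))$ (this is exactly what your argument would deliver, and is all that the weaker $U^3$-norm version, Theorem~\ref{thm:intro:deg-3:gowers}, asserts). It then regularizes $\{q_i\}$ (Lemma~\ref{lem:regularizing a set of quadratics}) so that every nontrivial combination of the surviving quadratics has $\rank_2 > \log_{|\F|}(2/\delta)$, at the cost of passing to a subspace of codimension $O(\log^3(1/\delta))$, and shows that if more than $\log_{|\F|}(2/\delta)$ regular quadratics remain then some restriction of $f$ has a biased high-rank quadratic, contradicting Lemma~\ref{lem:rank of biased quadratic}. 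The linear forms spent on regularization and on cutting out the subspace are exactly what populate $g(\ell'_1,\ldots,\ell'_{c_2})$, and this is why $c_2=O(\log^4(1/\delta))$ is allowed to be much larger than $c_1$. Without this step your approach proves only the $U^3$-norm statement, not the asymmetric $c_1/c_2$ split in Theorem~\ref{thm:intro:deg-3:bias}. Separately, the ``global linear structure'' step you sketch (the union of the row spaces of the $B_y$ is low-dimensional) is genuinely nontrivial: the paper's Lemma~\ref{lem:low rank space have few common functions} proves it by an iterative rank-halving argument, and ``a generic derivative determines the row spaces'' does not constitute a proof.
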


We note that if it weren't for the $g(\ell'_1,\ldots,\ell'_{c_2})$
part then this result would be quantitatively the same as
Theorem~\ref{thm:Dickson} (and tight of course). It is an
interesting open question to decide whether we can do only with
the $\sum_{j=1}^{O(\log_{|\F|} 1/\delta)} \ell_i \cdot q_i$ part.
Using the same techniques we show a similar result for the case
that $\|f\|_{U^3}
> \delta$.

\begin{introthm}\label{thm:intro:deg-3:gowers}(cubic polynomials with high $U^3$
norm)\sloppy Let $\F$ be a finite field and $f \in
\F[x_1,\ldots,x_n]$ a cubic polynomial such that
$\|f\|_{U^3}=\delta$. Then there exist $c+1 =
O(\log^2(\frac{1}{\delta}))$ quadratic polynomials
$q_0,\ldots,q_{c} \in \F[x_1,\ldots,x_n]$ and $c$ linear functions
$\ell_1,\ldots,\ell_{c} \in \F[x_1,\ldots,x_n]$  such that $f =
\sum_{j=1}^{c} \ell_j \cdot q_j +q_0$.
\end{introthm}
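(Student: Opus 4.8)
The plan is to mirror the proof of Theorem~\ref{thm:intro:deg-3:bias} but to start from the hypothesis that the quadratic derivatives of $f$ are constrained, rather than the polynomial itself. Since $\deg(f)=3$, each partial derivative $\Delta_y(f)$ is a quadratic polynomial (its leading quadratic part, call it $B_y$, is the symmetric bilinear-type form obtained from the cubic part of $f$). The key point is that $\|f\|_{U^3}=\delta$ means $\E_y[\bias(\Delta_y(f))^2]$ is bounded below by $\delta^{2^3}$ (unwinding the definition of $U^3$ as an average of $U^2$-norms, i.e. biases, of the derivatives). Hence for a noticeable fraction of directions $y$, the quadratic $\Delta_y(f)$ has bias at least $\poly(\delta)$, and by Theorem~\ref{thm:Dickson} such a quadratic has rank $O(\log(1/\delta))$; a standard averaging/Ramsey-type argument upgrades ``many $y$ have low rank'' to ``\emph{all} $y$ have rank at most $r=O(\log^2(1/\delta))$'' (this is where one uses that the rank of $B_y$ is a ``linear-algebraic'' function of $y$ — bounding it on a large subspace, or on a spanning set, bounds it everywhere up to a factor).

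Next I would extract the common structure. The condition $\rank(B_y)\le r$ for all $y$ says that the trilinear form $T$ associated to the cubic part of $f$ has the property that every ``slice'' $T(y,\cdot,\cdot)$ has low rank. The plan is to invoke the analysis of such trilinear forms (the same lemma that underlies Theorem~\ref{thm:intro:deg-3:bias}): there is a subspace of linear functionals $\ell_1,\dots,\ell_c$, with $c=O(\log^2(1/\delta))$, such that $T$ lives in the span of $\{\ell_j\otimes(\text{arbitrary bilinear})\}$ plus its symmetrizations — concretely the cubic part of $f$ can be written as $\sum_{j=1}^{c}\ell_j\cdot Q_j$ for quadratics $Q_j$. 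Writing $f$ minus this sum leaves a polynomial all of whose cubic contributions have been absorbed, i.e. a quadratic remainder $q_0$; collecting the lower-order terms of the $\ell_j Q_j$ into the $q_j$'s and $q_0$ gives exactly $f=\sum_{j=1}^c \ell_j q_j + q_0$.

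I expect the main obstacle to be the bootstrapping step that turns ``$\Delta_y f$ has small rank for many $y$'' into ``$\Delta_y f$ has small rank for \emph{every} $y$'', with only a quadratic (not exponential) blow-up in the rank bound. Naively, knowing low rank on a dense but unstructured set of directions does not immediately control the rank on a subspace; one needs to find a large subspace of good directions (using that the set of $y$ with $\rank(B_y)\le r$ is ``almost closed'' under addition, via $B_{y+y'}=B_y+B_{y'}$ up to lower order, so sums of two good directions have rank $\le 2r$, etc.), and then argue that restricting to that subspace does not destroy the Gowers-norm hypothesis. Controlling the arithmetic of ranks here — and making sure the final $c$ is $O(\log^2(1/\delta))$ rather than $\exp(O(\log(1/\delta)))$ — is the delicate part; everything after that is the same linear-algebra packaging as in the biased case.
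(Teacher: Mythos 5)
Your proposal matches the paper's argument in all essentials: derivative biases from $\|f\|_{U^3}$, Theorem~\ref{thm:Dickson} (via Lemma~\ref{lem:rank of quadratic with high U2}) to get low rank for many directions, subadditivity of $y\mapsto\rank_2(\Delta_y f)$ plus a Bogolyubov--Chang step (Lemmas~\ref{lem:subadditivity of derivatives} and \ref{lem:subadditive}) to pass to a subspace $V$ of codimension $O(\log(1/\delta))$ with uniformly low rank, and then Lemma~\ref{lem:low rank space have few common functions} to extract $O(\log^2(1/\delta))$ common linear functions whose vanishing kills the quadratic parts of all derivatives. Two small clarifications to your worries: you never need (and never get) a rank bound for \emph{all} $y\in\F^n$ --- only on the subspace $V$ --- and once $V$ is in hand the Gowers-norm hypothesis has done its job, so there is no issue of "preserving" it after the restriction; the remaining steps are pure linear algebra on the space of restricted derivatives, after which the codimension of the resulting $U$ directly gives the $c=O(\log^2(1/\delta))$ linear functions $\ell_j$ in the decomposition $f=\sum_j \ell_j q_j + q_0$.
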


Note that the difference between the structure of $f$ in
Theorems~\ref{thm:intro:deg-3:bias} and
\ref{thm:intro:deg-3:gowers} is the number of quadratic function
required. Recall that in \cite{Samorodnitsky07} Samorodnitsky
proved that if an $\F_2$ function $f$ has a high $U^3$ norm then
it has an exponentially (in $\|f\|_{U^3}$) high correlation with a
quadratic polynomial. Thus, our theorem shows that when $f$ is a
cubic polynomial then a much stronger statement holds. Namely, $f$
has correlation $\exp(\log^2(1/\delta))$ with a quadratic
polynomial, and further, has a nice structure.

Our second main result is an analog of
Theorem~\ref{thm:intro:deg-3:bias} for the case of quartic
polynomials (i.e. $\deg(f)=4$).

\begin{introthm}\label{thm:intro:deg-4:bias}(biased quartic polynomials)
Let $\F$ be a finite field and $f \in \F[x_1,\ldots,x_n]$ a
quartic polynomial ($\deg(f)=4$) such that $\bias(f)=\delta$. Then
there exist $4c = \poly(|\F|/\delta)$ polynomials
$\{\ell_i,q_i,q'_i,g_i\}_{i=1}^{c}$, where the $\ell_i$-s are
linear, the $q_i$-s and $q'_i$-s are quadratic and the $g_i$-s are
cubic such that $f = \sum_{j=1}^{c} \ell_j \cdot g_j +
\sum_{j=1}^{c} q_j \cdot q'_j $.
\end{introthm}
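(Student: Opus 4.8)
The plan is to descend to the discrete derivatives $\Delta_y f$, which are cubic, apply the structure theorem for biased cubics to them, promote the resulting direction-dependent decompositions to a single uniform one, and finally ``integrate'' that uniform structure back to $f$. For the descent, observe that $\bias(f)^2=\bigl|\E_{x,y}[\omega^{\Delta_y f(x)}]\bigr|\le \E_y[\bias(\Delta_y f)]$, so at least a $\delta^2/2$ fraction of directions $y$ are ``good'' in the sense that $\bias(\Delta_y f)\ge\delta^2/2$; for each good $y$ Theorem~\ref{thm:intro:deg-3:bias} then yields
\[
\Delta_y f \;=\; \sum_{i=1}^{O(\log(1/\delta))}\ell_i^{\,y}\,q_i^{\,y}\;+\;h^{\,y},
\]
where $h^{\,y}$ is a cubic function of at most $O(\log^4(1/\delta))$ linear forms, with all the forms depending on the direction $y$.

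The heart of the proof is to replace these $y$-dependent families by one bounded family. I would use the identity $\Delta_{y+z}f=\Delta_y f+\Delta_z f+\Delta_z\Delta_y f$, in which $\Delta_z\Delta_y f$ has degree $\le 2$ because $\deg f=4$, so that a quadratic error is always harmless; this lets the structures for good $y$ and good $z$ be combined into one for $y+z$ up to such an error. Since the good set has density $\ge\delta^2/2$, a Bogolyubov-type theorem (in the quantitatively strong form due to Sanders) shows that its fourfold sumset contains a subspace $V_0\subseteq\F^n$ of codimension $\poly(\log(1/\delta))$, and combining this with a maximal-span (stabilization) argument over the linear and quadratic forms appearing across the good directions inside $V_0$, one extracts fixed linear forms $\ell_1,\dots,\ell_R$ and fixed quadratics $q_1,\dots,q_r$, with $R,r=\poly(|\F|/\delta)$, such that for every $y\in V_0$
\[
\Delta_y f \;=\; \sum_{i=1}^{R}\ell_i\cdot q_i^{\,y}\;+\;\sum_{i=1}^{r}q_i\cdot\ell_i^{\,y}\;+\;q_0^{\,y},
\]
with $q_i^{\,y},q_0^{\,y}$ quadratic and $\ell_i^{\,y}$ linear in $x$, all depending on $y$; this is the uniform derivative structure announced in the abstract.

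It remains to integrate. Enlarging $\{\ell_1,\dots,\ell_R\}$ by the $\poly(\log(1/\delta))$ linear forms cutting out $V_0$, complete the resulting list to a coordinate system and write $f=\sum_j \ell_j g_j+f'$, where $\deg g_j\le 3$ and $f'$ uses only the complementary coordinates; the first summand already has the desired shape. Reading the uniform derivative identity in the complementary coordinates shows that every derivative of $f'$ equals $\sum_{i=1}^{r}\hat q_i\cdot(\text{linear})+(\text{quadratic})$ for fixed quadratics $\hat q_i$ (the reductions of the $q_i$); hence every slice $\widehat T(\cdot,\cdot,\cdot,y)$ of the symmetric $4$-linear form $\widehat T$ of $f'$ is a symmetrization of $\sum_i B_{\hat q_i}\otimes(\text{linear in }y)$, and by the full symmetry of $\widehat T$ this forces its partition rank under the $2{+}2$ split to be $O(r)$; an analogous look at the $3$-linear form of $f'$ shows it too has bounded partition rank. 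Realizing these forms by explicit products of two quadratics, respectively of a linear form and a quadratic (a direct computation with the discrete Leibniz rule; here Theorem~\ref{thm:Dickson} handles the small-characteristic quadratic-to-bilinear-form correspondence), and folding the leftover degree-$\le 2$ part in as one more product of quadratics, gives $f'=\sum_k q_k q_k'+\sum_k \ell_k''g_k''$. Collecting everything yields $f=\sum_{j=1}^{c}\ell_j g_j+\sum_{j=1}^{c}q_j q_j'$ with $c=\poly(|\F|/\delta)$, as claimed.

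The principal obstacle is the uniformization step: making the direction-dependent families coming from the cubic theorem collapse into a single family whose size stays \emph{polynomial} in $1/\delta$ (the $U^4$ analogue described in the abstract only reaches $R=\exp(\poly\log(1/\delta))$). This is where the Bogolyubov-type passage to a large subspace and the rank/stabilization bookkeeping on the quadratics $q_i$ must be carried out with care; by contrast the descent to derivatives and the final integration, including the absorption of the sub-leading (cubic and lower-degree) parts via the analogous easier arguments for cubics and quadratics, are comparatively routine once the uniform derivative structure is in hand.
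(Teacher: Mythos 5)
Your plan takes a genuinely different route from the paper, and it has a real gap precisely at the step you yourself flag as ``the principal obstacle.''

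The paper does \emph{not} try to uniformize the $y$-dependent decompositions of the derivatives $\Delta_y f$ into a single global family $\{\ell_i\}\cup\{q_i\}$. That uniformization strategy is what the paper uses for the $U^4$ theorems (Lemma~\ref{lem:low rank few com fun deg 4} and its supporting Lemmas~\ref{lem:low rank -> low rank 3c} and~\ref{lem:low dim c implie lower dim}), and there it yields $t_1\le r$ quadratics but $t_2=2^{O(r)}$ linear forms, with $r=\mathrm{polylog}(1/\delta)$ --- so $R=\exp(\mathrm{polylog}(1/\delta))$, which is \emph{super-polynomial} in $1/\delta$. You assert that ``a maximal-span (stabilization) argument'' will give $R,r=\poly(|\F|/\delta)$, but you supply no argument for this, and the paper's own stabilization argument is exactly where the exponential blow-up happens: each round of Lemma~\ref{lem:low rank -> low rank 3c} multiplies the modulus $c$ by a constant and must be iterated $r$ times, and Lemma~\ref{lem:low dim c implie lower dim} then needs to be iterated $3c=3\exp(r)$ times. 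Appealing to Sanders' improved Bogolyubov bound controls only the codimension of the initial subspace $V_0$, not the size of the common basis; the blow-up is in the common-basis extraction, not in the sumset step.

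What the paper actually does for the bias case sidesteps uniformization entirely: it applies Lemma~\ref{lem:Bogdanov and Viola} (Bogdanov--Viola) to $\epsilon$-approximate $\tilde f$ by a function $H$ of $\poly(|\F|/\delta)$ of its own derivatives, hence (after restricting to a subspace) by a function of $\poly(|\F|/\delta)$ quadratics; it then uses Lemma~\ref{lem:regularizing a set of quadratics} to make these quadratics high-rank, Lemma~\ref{lem:rank implies regular} to conclude strong regularity, and the Kaufman--Lovett Lemma~\ref{lem:Kaufman Lovett} to upgrade the approximation to an exact identity $\tilde f=G(q_1,\dots,q_{c'})$. Finally it uses the disjoint-polynomial device (Lemmas~\ref{lem:making disjoint} and~\ref{lem:representing by disjoint}) to show $G$ can be taken quadratic, which directly produces the form $\sum \ell_j g_j+\sum q_j q_j'$. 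All the quantitative control stays polynomial because the Bogdanov--Viola lemma already gives the outer function with polynomially many arguments; there is no per-direction-to-global collapse to pay for.

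Separately, your integration step via symmetric $4$-linear forms and partition rank is underspecified: from $\Delta_y f'=\sum_i \hat q_i\,\ell_i^y+(\mathrm{quadratic})$ you get that the top-degree slice $T(\cdot,\cdot,\cdot,y)$ is a \emph{symmetrization over the first three slots} of $\sum_i B_{\hat q_i}\otimes \ell_i(\cdot,y)$, and that symmetrization produces cross terms such as $B_{\hat q_i}(x_1,x_3)\ell_i(x_2,x_4)$ which do not split along the $\{1,2\}\mid\{3,4\}$ partition. It is not clear how ``full symmetry of $\widehat T$'' alone recovers a genuine $O(r)$ bound on $2{+}2$ partition rank, and over small characteristic one cannot freely pass between a quartic and its symmetric $4$-linear form. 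The paper avoids these issues altogether by working directly with the composition $G(q_1,\dots,q_{c'})$ and the disjointness normalization. If you want to pursue your route you would need, at a minimum, a proof that the uniformization can be done with polynomially many linear forms (which the paper does not provide and which appears to be a nontrivial strengthening of Lemma~\ref{lem:low rank few com fun deg 4}), and a careful partition-rank argument that handles the symmetrization and small-characteristic issues.
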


As mentioned above, prior to this result it was known that there
exist $C$ cubic polynomials $g_1,\ldots,g_C$ and a function $F$
such that $f=F(g_1,\ldots,g_C)$, where $C$ is a tower of height
$\exp(\poly(1/\delta))$ \cite{GreenTao07,KaufmanLovett08}. Thus,
our result greatly improves the dependance on $\delta$ and gives a
nice structure for the polynomial. We note that in their work
Green and Tao do show that such a nice structure exists when $d <
|\F|$ \cite{GreenTao07}, but no such result was known for smaller
fields (in addition $C$ needs to be even larger for such a nice
representation to hold).

Our third main result is for the case where $\deg(f)=4$ and
$\|f\|_{U^4} = \delta$. In such a case it is known
\cite{LMS08,GreenTao07} that we cannot hope to get a nice
structure as in Theorem~\ref{thm:intro:deg-3:gowers} as it may be
the case that $f$ has an exponentially small (in $n$) correlation
with all lower degree polynomials. However, we do manage to show
that there is some subspace $U \subset \F^n$ such that when
restricted to $V$, $f|_U$ is equal to some degree three
polynomial. Thus, $f$ does not have a correlation with a cubic
polynomial in the entire space but instead there is a large
subspace on which it is of degree three. In fact we show a more
general result. Namely, that there is a large subspace $V$, of
dimension $n-O(\log(1/\delta))$, that can be partitioned to
subspaces of dimension $n/\exp(\log^2(1/\delta))$ such that the
restriction of $f$ to any of the subspaces in the partition is of
degree three.

\begin{introthm}\label{thm:intro:deg-4:gowers}(quartic polynomials with high $U^4$ norm)
Let $\F$ be a finite field and $f \in \F[x_1,\ldots,x_n]$ a degree
four polynomial such that $\|f\|_{U^4}=\delta$. Then there exists
a partition of a subspace $V\subseteq \mathbb{F}^{n}$, of
dimension $\dim(V) \geq n-O(\log(1|/\delta))$, to subspaces
$\left\{ V_{\alpha}\right\} _{\alpha\in I}$, satisfying
$\dim(V_\alpha) =\Omega(n/|\F|^{\log^2(1/\delta)})$, such that for
every $\alpha\in I$, $f|_{V_{\alpha}}$ is a cubic polynomial.
\end{introthm}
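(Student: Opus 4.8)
The plan is to extract from $\|f\|_{U^4}=\delta$ a \emph{uniform} structural description of every first derivative $\Delta_y f$, and then to find a partition of a large subspace on which this description forces $\deg(f)$ down from $4$ to $3$.

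First I would move from $f$ to its derivatives. Since $\|f\|_{U^4}^{16}=\E_y\big[\|\Delta_y f\|_{U^3}^{8}\big]$, a Markov/averaging argument produces a set $A\subseteq\F^n$ of density $\poly(\delta)$ of directions $y$ with $\|\Delta_y f\|_{U^3}\ge\poly(\delta)$; for each such $y$ the cubic polynomial $\Delta_y f$ is governed by Theorem~\ref{thm:intro:deg-3:gowers}, giving $\Delta_y f=\sum_{j=1}^{c}\ell_j^{y}q_j^{y}+q_0^{y}$ with $c=O(\log^2(1/\delta))$ uniformly in $y\in A$ (the $\ell_j^y$ linear, the $q_j^y,q_0^y$ quadratic). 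A set of density $\rho$ is not contained in a subspace of codimension exceeding $\log_{|\F|}(1/\rho)$, so $V:=\langle A\rangle$ satisfies $\dim V\ge n-O(\log(1/\delta))$; this is the subspace in the statement.

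The core of the proof is to replace the $y$-dependent $\ell_j^y,q_j^y$ by a single bounded collection. I would use two structural features. (i) The cocycle identity $\Delta_{y+z}f=\Delta_y f+\Delta_z f+\Delta_y\Delta_z f$, together with $\deg(\Delta_y\Delta_z f)\le 2$, means that the set of polynomials of the shape $\sum_{i=1}^{R}\ell_i\cdot(\text{quadratic})+\sum_{i=1}^{r}q_i\cdot(\text{linear})+(\text{quadratic})$, with the $\ell_i,q_i$ \emph{fixed}, is a linear subspace; hence a description of this shape valid for all $y\in A$ is automatically valid for all $y\in\langle A\rangle=V$. (ii) The map sending $y$ to the homogeneous degree-$3$ part of $\Delta_y f$ — equivalently the trilinear slice $B_{f_4}(y,\cdot,\cdot,\cdot)$ of the quartic form $f_4$ — is \emph{linear} in $y$. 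Feeding the per-direction decompositions into (ii) and inducting on the partition rank, one shows that these low-rank cubic slices $\{[\Delta_y f]_3\}_{y\in A}$ all lie in a common subspace $\sum_{i=1}^{R}\ell_i\cdot\mathcal Q+\sum_{i=1}^{r}q_i\cdot\mathcal L$ determined by $R=\exp(O(\log^2(1/\delta)))$ fixed linear forms $\ell_i$ and $r=O(\log^2(1/\delta))$ fixed quadratics $q_i$ (the exponential blow-up $R=\exp(r)$ arises because each inductive step may multiply the number of linear forms needed by a constant); the lower-degree parts of the $\Delta_y f$ are absorbed by a similar but easier argument. The upshot is: for every $y\in V$, $\Delta_y f=\sum_{i=1}^{R}\ell_i q_i^{y}+\sum_{i=1}^{r}q_i\ell_i^{y}+q_0^{y}$ with the $\ell_i,q_i$ independent of $y$. \textbf{I expect this uniformization step to be the main obstacle}: the naive greedy choice of linear forms is unbounded, and one really has to exploit the multilinearity of $B_{f_4}$ to keep the count finite.

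Finally I would assemble the partition. Inside $V$ choose a subspace $V_0$ that (a) lies in $\bigcap_{i=1}^{R}\ker\ell_i$ and (b) is totally singular for every fixed quadratic $q_i$ (so $q_i$ and its polar form vanish on $V_0$); since one totally singular/isotropic subspace costs at most a bounded factor of the dimension per constraint, $\dim V_0=\Omega(n/|\F|^{\log^2(1/\delta)})$, and the cosets $\{V_0+a\}_{a\in V}$ partition $V$ into affine subspaces of this dimension. On any coset, each $\ell_i$ is constant and each $q_i$ restricts to an affine function, so for every $y\in V_0$ the restriction $(\Delta_y f)|_{V_0+a}=\sum_i(\text{const})q_i^{y}+\sum_i(\text{affine})\ell_i^{y}+q_0^{y}$ has degree at most $2$; equivalently every directional derivative of $f|_{V_0+a}$ along $V_0$ is quadratic, which forces $f|_{V_0+a}$ to be a cubic polynomial. (Two minor points: ``cubic'' should be read as ``degree at most $3$'', as the $S_4$ example already shows, and in characteristic $2$ or $3$ one works throughout with multilinear/reduced representations.) This produces the desired partition.
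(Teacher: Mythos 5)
Your high-level plan matches the paper's: pass to derivatives, get a low-rank description of each $\Delta_y f$ via Theorem~\ref{thm:intro:deg-3:gowers}, find a common basis of linear and quadratic forms, and fix them to constants to force $\deg(f)$ down. But the step where you take $V := \langle A\rangle$ has a genuine gap. The span of $A$ does give the codimension bound, but it gives you no control on $\rank_3(\Delta_y f)$ for $y\in\langle A\rangle\setminus A$, and the uniformization induction you correctly flag as the main obstacle really needs a rank bound on a \emph{linear space} of derivatives. Your observation~(i) is fine as far as it goes --- once a single bounded collection $\{\ell_i,q_i\}$ works for every $y\in A$, the cocycle identity does push the representation to $\langle A\rangle$ for free --- but it is only useful \emph{after} the common basis is found. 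The induction that finds it (the paper's Lemma~\ref{lem:low rank few com fun deg 4}, via Lemma~\ref{lem:low rank -> low rank 3c}) proceeds by picking a maximal-rank $g$ and a putative bad $h$ and bounding $\rank_3^c$ of $g+h$; translated back to directions, this forces you to invoke the rank hypothesis for $\Delta_{y+z}f$ where $y,z\in A$, and $y+z$ need not lie in $A$. The paper closes this hole with a Bogolyubov--Chang step: $y\mapsto\rank_3(\Delta_y f)$ is subadditive (the cubic analogue of Lemma~\ref{lem:subadditivity of derivatives}), so Lemma~\ref{lem:subadditive} applied to $A$ produces a subspace $V\subseteq kA-kA$ of codimension $O(\log(1/\delta))$ on which $\rank_3(\Delta_y f)=O(\log^2(1/\delta))$ holds for \emph{every} $y$, and that $V$ --- not $\langle A\rangle$ --- is where the common-basis induction runs. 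Without this step the induction has nothing to stand on.

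Two smaller points. First, the common-basis step is indeed the technical heart; you sketch the right ingredients (linearity of $y\mapsto[\Delta_y f]_3$, an induction lowering rank), but the actual argument works modulo a growing set $A$ of linear and quadratic forms and uses a two-stage potential --- first a ``rank modulo $A$'' quantity (Lemma~\ref{lem:low rank -> low rank 3c}) and then a ``dimension modulo $A$'' quantity (Lemma~\ref{lem:low dim c implie lower dim}) --- and it is this two-stage structure that caps $R$ at $\exp(r)$. Second, your partition step is correct and is actually a slightly cleaner packaging than the paper's: you choose one $V_0\subseteq\bigcap_i\ker\ell_i$ that is simultaneously totally singular for the $q_i$-s and partition $V$ into cosets of $V_0$, whereas the paper re-writes each $Q_i$ in Dickson form coset by coset and refines iteratively; the dimension bounds agree, since each quadratic costs at most a factor $|\F|$ of dimension (the $p$-at-a-time regrouping the paper does in odd characteristic). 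Your parenthetical about ``cubic'' meaning ``degree at most three'' is also right and worth keeping.
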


\begin{remark}
Note that the structure guaranteed in
Theorem~\ref{thm:intro:deg-4:gowers} is shared by very few
polynomials. Specifically, a random polynomial of degree four is
unlikely to be equal to any degree three polynomial on any
subspace of dimension larger than, say, $n^{0.9}$. To see this
note that if $|\F|=p$ and $\dim(V)=d$ then there are roughly
$p^{d^3}$ cubic polynomials and $p^{d^4}$ quartic polynomials over
$V$. Furthermore, the map taking a quartic polynomial over $\F^n$
to its restriction is a linear map and so the fraction of quartic
polynomials that equal a degree three polynomial on $V$ is
(roughly) $p^{-d^4+d^3}$. As the total number of subspaces can be
bounded by $p^{n^2}$ we get that the fraction of quartic
polynomial that are equal to a degree three polynomial on some
subspace of dimension greater than $n^{0.9}$ is at most $p^{n^2 -
n^{3.6} + n^3} = \exp(-n^{3.6})$.
\end{remark}

This result has the same flavor as the inverse $U^3$ norm theorem
of \cite{GreenTao08}. There it was shown that if $f:\F_5^n \to
\F_5$ satisfies $\|f\|_{U^3}=\delta$ then there exists a subspace
$V$ of codimension $\poly(1/\delta)$, such that on an `average'
coset of $V$, $f$ is correlated with a quadratic polynomial.
Recently, Wolf \cite{Wolf09} proved a similar result for the case
of characteristic two, thus extending Samorodnitsky's argument
\cite{Samorodnitsky07}. The main difference between these results
and our result is that ours only holds for polynomials of degree
four whereas the results of
\cite{GreenTao08,Samorodnitsky07,Wolf09} hold for arbitrary
functions. On the other hand our result holds for the $U^4$ norm
compared to the $U^3$ norm studied there. Moreover, when
$\mathrm{char}(\F)>4$, using the same techniques we can actually
show that $f$ must have a structure similar to the one guaranteed
by Theorem~\ref{thm:intro:deg-4:bias}.

\begin{introthm}\label{thm:intro:deg-4:gowers:high
char} Let $\F$ be a finite field with $\mathrm{char}(\F)>4$ and $f
\in \F[x_1,\ldots,x_n]$ a degree four polynomial such that
$\|f\|_{U^4}=\delta$. Then
$$f = \sum_{i=1}^{R} \ell_i\cdot g_i + \sum_{i=1}^{r} q_i \cdot
q'_i\;,$$ for $r=O(\log^2(1/\delta))$  and
$R=\exp(\log^2(1/\delta))$ where $\ell_i$ is linear, $q_i,q'_i$
are quadratic and $g_i$ cubic.
\end{introthm}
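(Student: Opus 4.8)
The plan is to combine the structural description of the partial derivatives of $f$ that is developed in the course of proving Theorems~\ref{thm:intro:deg-4:bias} and~\ref{thm:intro:deg-4:gowers} with the elementary fact that, in characteristic larger than $\deg(f)=4$, one can reconstruct a polynomial from its formal partial derivatives via Euler's identity without losing structure; this makes the proof a variant of that of Theorem~\ref{thm:intro:deg-4:bias}. Concretely, recall that there are $r=O(\log^2(1/\delta))$ fixed quadratics $q_1,\dots,q_r$ and $R=\exp(\log^2(1/\delta))$ fixed linear forms $\ell_1,\dots,\ell_R$ such that every discrete derivative decomposes as $\Delta_yf=\sum_{i=1}^R\ell_i q_i^y+\sum_{i=1}^r q_i\ell_i^y+q_0^y$; since $\mathrm{char}(\F)>4$ we may take all the $\ell_i$ and $q_i$ homogeneous, pushing their lower-order parts into $q_0^y$, and — again because $\mathrm{char}(\F)>4$ — this decomposition can be arranged to hold for \emph{every} $y\in\F^n$ rather than only on a subspace, which is what will ultimately produce a global rather than a subspace-wise conclusion.

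First I would pass from discrete to formal partials. Since $|\F|>4$, for each coordinate $j$ the formal partial $\partial_jf$ is a fixed $\F$-linear combination of $\Delta_{e_j}f,\Delta_{2e_j}f,\Delta_{3e_j}f,\Delta_{4e_j}f$: this follows by inverting the (Vandermonde) linear system coming from $\Delta_{ce_j}f=\sum_{k=1}^4\frac{c^k}{k!}\partial_j^kf$ for $c=1,2,3,4$. Hence every $\partial_jf$ has the same shape $\partial_jf=\sum_{i=1}^R\ell_i h_{i,j}+\sum_{i=1}^r q_i m_{i,j}+p_j$ with $h_{i,j}$ quadratic, $m_{i,j}$ linear and $p_j$ of degree at most two, and — crucially — with the same $\ell_i,q_i$ for all $j$. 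I would then reassemble $f$ one homogeneous degree at a time: for $2\le k\le 4$, Euler's identity gives $f_k=\tfrac1k\sum_j x_j(\partial_jf)^{(k-1)}$ (the superscript denoting the homogeneous part of that degree), and on substituting the description of $\partial_jf$ and using $\sum_j\ell_{i,j}x_j=\ell_i(x)$ (where $\ell_{i,j}$ are the coefficients of the homogeneous form $\ell_i$), the $\ell_i$-contributions collapse to $\sum_i\ell_i\cdot(\text{homogeneous polynomial})$ and the $q_i$-contributions to $\sum_i q_i\cdot(\text{homogeneous polynomial})$, still with only $R$ and $r$ summands. For $k=4$ the bare quadratics $p_j$ contribute nothing to $(\partial_jf)^{(3)}$, so $f_4=\sum_{i=1}^R\ell_i g_i'+\sum_{i=1}^r q_i q_i''$ exactly, and $f_2,f_1,f_0$, being of degree at most two, are trivially of the required form.

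The one piece not yet of the desired shape is the leftover cubic form produced at $k=3$ by the bare quadratics $p_j$, namely $\Theta:=\tfrac13\sum_j x_j p_j^{(2)}$, and I expect handling this to be the main obstacle. The clean route is to carry through the proof of the derivative-structure lemma while tracking the leftover: the quadratics $q_0^y$ — hence the $p_j$ — all lie in a single $O(\log^2(1/\delta))$-dimensional space of quadratics, since that is precisely the space one obtains when the structure theorem for quadratics, Theorem~\ref{thm:Dickson}, is applied inside that argument. Writing $p_j^{(2)}=\sum_s\gamma_{j,s}\hat r_s$ with $O(\log^2(1/\delta))$ fixed quadratic forms $\hat r_s$ then gives $\Theta=\sum_s\hat r_s\cdot(\sum_j\gamma_{j,s}x_j)$, a sum of $O(\log^2(1/\delta))$ products of a quadratic and a linear form. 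An alternative, which I expect to be where the real work lies, is to show that $\Theta$ (equivalently, the residual piece of $f_3$) is correlated with a quadratic — a consequence of $\|f\|_{U^4}=\delta$ being noticeable — and then to invoke Theorem~\ref{thm:intro:deg-3:bias}, whose output $\sum\ell_iq_i+g(\ell_1',\dots,\ell_{c_2}')$ is itself of the required shape, since a cubic function of $O(\log^4(1/\delta))$ linear forms is a sum of $\poly(\log(1/\delta))$ products of a linear and a quadratic form.

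Finally I would add up the pieces and merge the $\ell_i$-summands (and the $q_i$-summands) coming from the different homogeneous degrees; since the $\ell_i$ (resp. the $q_i$) are shared, this does not increase their number, yielding $f=\sum_{i=1}^{R'}\ell_i g_i+\sum_{i=1}^{r'}q_i q_i'$ with $R'=\exp(\log^2(1/\delta))$ and $r'=O(\log^2(1/\delta))$, the $\ell_i$ linear, the $g_i$ cubic and the $q_i,q_i'$ quadratic, as claimed. The genuine content of the argument is thus (a) the direction-independence of the $\ell_i,q_i$ in the derivative decomposition, established earlier, and (b) showing that the leftover garbage collapses to the same shape; everything else is bookkeeping made possible by the high-characteristic Euler identity.
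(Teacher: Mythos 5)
Your approach via formal partial derivatives, Vandermonde inversion, and Euler's identity is genuinely different from the paper's, which instead passes to a subspace where the $\ell_i$ vanish, applies Lemma~\ref{lem:making disjoint} to put the $q_i$ in disjoint form, eliminates $x_i^2x_j^2$ monomials by subtracting products $q_iq_j$, and then analyzes the remaining degree-four monomials coordinate by coordinate (this monomial/integration argument, not the Vandermonde step, is where the paper actually uses $\chr(\F)>4$). Your Euler reconstruction is cleaner in that it goes straight from the derivative decomposition to the desired form for $f_4$ without the disjointness machinery, and it largely works: $f_4=\frac14\sum_j x_j(\partial_j f)^{(3)}$ does collapse to $\sum_i\ell_i g_i+\sum_i q_iq_i'$ as you say. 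Two remarks on side issues before the main one. The assertion that $\chr(\F)>4$ lets the derivative decomposition hold for \emph{every} $y\in\F^n$ is wrong; the restriction to a subspace $V$ of codimension $O(\log(1/\delta))$ comes from the Bogolyubov--Chang step and has nothing to do with characteristic. It is, however, harmless: one runs the argument for $f|_V$ and then re-expresses $f$ as $\sum_{i=1}^{m}x_i\tilde g_i + f|_V$, adding $m=O(\log(1/\delta))\le R$ more terms to the $\ell_ig_i$ sum.

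The genuine gap is exactly where you flag it, and neither of your two routes for handling $\Theta=\frac13\sum_jx_jp_j^{(2)}$ can work. The obstruction is that $\|f\|_{U^4}$ is completely insensitive to the cubic part of $f$: if $h$ is any cubic then $\Delta_{y_1}\Delta_{y_2}\Delta_{y_3}\Delta_{y_4}h\equiv0$, so replacing $f$ by $f+h$ leaves $\|f\|_{U^4}$ unchanged. Consequently the degree-two parts $p_j=(\partial_j f)^{(2)}-(\text{absorbed part})=\partial_j f_3-(\text{absorbed part})$ can be made to span an arbitrarily large space of quadratics, killing Route~1, and $\Theta$ (which equals $f_3$ modulo the absorbed cubic) can be an arbitrary cubic with no correlation to any quadratic, killing Route~2's premise and its appeal to Theorem~\ref{thm:intro:deg-3:bias}. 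What actually closes the argument is simply that the leftover $\Theta$ plus the degree-$\le2$ parts form a cubic $g_0$; the paper's proof explicitly ends with an extra $+g_0$ (``$f=\sum_{i\le j}\alpha_{i,j}q_iq_j+\sum_{i=1}^{2r+R}y_ig_i+g_0$''), and this is absorbed into the statement's $\sum\ell_ig_i$ only by taking one of the $\ell_i$ to be the constant $1$. If you allow that (reading ``linear'' as ``degree $\le1$''), your reconstruction goes through with no further work and is arguably shorter than the paper's; without that reading, the claim as literally written is unattainable, since the cubic part of a degree-four polynomial is unconstrained by the $U^4$ hypothesis.
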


\ignore{

If instead of knowing that the $U^4$ norm of $f$ is large we know
that its $U^3$ norm is large then we get a stronger conclusion,
namely that $f$ is constant on a large subspace.

\begin{introthm}\label{thm:intro:deg-4:gowers3}(quartic polynomials with high $U^3$ norm)
Let $\F$ be a finite field and $f \in \F[x_1,\ldots,x_n]$ a degree
four polynomial such that $\|f\|_{U^3}=\delta$. Then there exist a
constant $c = ***$ and a linear space $V \subset \F^n$ of
dimension $\dim(V) = n/c$ such that $f|_v = \alpha$ for some
$\alpha \in \F$.
\end{introthm}
}

\subsection{Proof Technique}\label{sec:intro:proof technique}

The main approach in all the proofs is to consider the space of
discrete partial derivatives of $f$ and look for some structure
there. We will explain the idea for the case of degree three
polynomials and then its extension to degree four polynomials.

Let $f$ be a degree three polynomials. Assume that $f$ has high
bias (alternatively, high $U^3$ norm). By a standard argument it
follows that a constant fraction of its derivatives, which are
degree $2$ polynomials, have high bias (high $U^2$ norm). By
Theorem~\ref{thm:Dickson} it follows that for a constant fraction
of the directions, the partial derivatives depends on a small
number of linear functions (same for the $U^3$ norm). Hence, in
the space of partial derivatives, a constant fraction of the
elements depend on a few linear functions. We now show that there
must be a small number of linear functions that `explain' this.
More accurately, we show that there exist a subspace $V \subset
\F^n$, of dimension $\dim(V) = n - O(1)$, and $O(1)$ linear
functions $\ell_1,\ldots,\ell_c$, such that for every $y \in V$ it
holds that $\Delta_y(f) = \sum_{i=1}^{c} \ell_i \cdot \ell^{(y)}_i
+ \ell^{(y)}_0$, where the $\ell_i^{(y)}$-s are linear functions
determined by $y$.

We are now basically done. Consider the subspace $U = \{x :
\ell_1(x)=\ldots=\ell_c(x)=0\}$. Then, for every $y\in V$ it holds
that $\Delta_y(f)|_U = \ell^{(y)}_0|_U$. This implies that $f|_V =
\sum_{i=1}^{c} \ell_i \cdot q_i + q_0$, where the $q_i$-s are
quadratic polynomials. As $\dim(V) = n-O(1)$ we obtain the same
structure (with a different constant $c$) for $f$.

To prove the result for biased degree four polynomials we follow
the footsteps of \cite{KaufmanLovett08} with two notable
differences. Let $f$ be such a polynomial. First, we pass to a
subspace on which all the partial derivatives of $f$ have low rank
as degree three polynomials. This steps relies on our results for
biased degree three polynomials. Then, as in
\cite{KaufmanLovett08}, we show that $f$ can be approximated by a
function of a few of its derivatives. Because of the properties of
the derivatives, this means that $f$ can be approximated well by a
function of a few quadratics and linear functions. We then show,
again following \cite{KaufmanLovett08}, that in such a case $f$ is
actually a function of a few quadratics and linear functions. Here
we heavily rely on properties of quadratic functions to avoid the
blow up in the number of polynomials approximating $f$ that occurs
in \cite{KaufmanLovett08,GreenTao07}. Finally, we show that if a
degree four polynomial is a function of several quadratic and
linear functions then it actually have a nice structure.

The proof for the case of degree four polynomials with high $U^4$
norm is more delicate. Assume that $f$ is such a polynomial. As
before, a constant fraction of the partial derivatives of $f$ are
degree three polynomials with high $U^3$ norm. By the result for
degree three polynomials we get that each of those partial
derivatives is of the form $\Delta_y(f) = \sum_{i=1}^{c}
\ell^{(y)}_i \cdot q^{(y)}_i + q^{(y)}_0$. Again we find a
subspace $V$, of constant co-dimension, such that for every $y \in
V$, $\Delta_y(f)$ has a nice structure. We now show that there
exist a small number of linear and quadratic functions
$\{\ell_i,q_i\}_{i=1}^{c}$ such that for every $y \in V$ it holds
that $\Delta_y(f) = \sum_{i=1}^{c} \ell_i \cdot q^{(y)}_i +
\sum_{i=1}^{c} \ell^{(y)}_i \cdot q_i + q^{(y)}_0$, where the
polynomials $\{\ell^{(y)}_i,q^{(y)}_i\}$ depend on $y$. This is
the technical heart of the proof. It now follows quite easily that
there is a subspace $U\subseteq V$ of dimension $n/\exp(c)$ such
that when restricted to $U$ all the functions $\{\ell_i,q_i\}$ are
fixed to constants. Thus, for every $y \in  U$ it holds that
$\deg(\Delta_y(f)) = 2$. So we get that $\deg(f|_{U}) = 3$. In
fact, by closely examining the argument above we show an even
stronger result. Namely, that we can partition a large subspace of
$\F^n$ to (affine) subspaces of dimension $n/\exp(c)$ such that on
each of the subspaces $f$ is equal to some cubic polynomial (that
may depend on the subspace).

\subsection{Organization}

In Section~\ref{sec:prelim} we give some basic definitions and
discuss properties of subadditive functions. In
Section~\ref{sec:degree 3} we prove the theorems concerning degree
three polynomials. In Section~\ref{sec:bias 4} we prove
Theorem~\ref{thm:intro:deg-4:bias} and in Section~\ref{sec:U4
norm} we prove Theorems~\ref{thm:intro:deg-4:gowers} and
\ref{thm:intro:deg-4:gowers:high char}.

\section{Preliminaries}\label{sec:prelim}

In this paper $\F$ will always be a prime field. We denote with
$\F_p$ the field with $p$ elements. As we will be considering
functions over $\F_p$ we will work modulo the polynomials $x_i^p -
x_i$. In particular, when we write $f=g$, for two polynomials, we
mean that they are equal as functions and not just as formal
expressions. This will be mainly relevant when we consider
quadratic polynomials (or higher degree polynomials) over $\F_2$.
More generally, we shall say that a function $f$ has degree $d$ if
there is a degree $d$ polynomial $g$ such that $f=g$. Note that
this does not have an affect on the bias and the Gowers norm.
Namely, the bias and $U^d$ norm of $f$ do not change when adding
multiplies of $x_i^p-x_i$. Finally we note that if all the partial
derivative of $f$ have degree at most $d-1$ then there is a
polynomial $g$ of degree at most $d$ such that $f=g$ (this is
easily proved by observing that a degree $k$ polynomial, all of
whose individual degrees are smaller than $|\F|$, always has a
partial derivative whose degree is $k-1$). From this point on we
shall use the notion of a function and a polynomial arbitrarily
without any real distinction.\\

The Fourier transform of a function $f:\F^n \to \F$ is defined as
$$\hat{f}(\alpha) = \E_{x \in \F^n}[f(x)\overline{\chi_\alpha(x)}]
\;,$$ where for $\alpha=(\alpha_1,\ldots,\alpha_n)$,
$\chi_\alpha(x) = \omega^{\sum_{i=1}^{n}\alpha_i x_i}$ where
$\omega = e^{\frac{2\pi i}{|\F|}}$ is a complex primitive root of
unity of order $|\F|$.
%For two functions $f,g:\F^n \to \F$ we define the convolution as $$h(x) = f*g(x) = \E_{y \in
%\F^n}[f(x-y)g(y)]\;.$$ It is not difficult to see that $$\widehat{f*g}(\alpha) = \hat{f}(\alpha)\cdot
%\hat{g}(\alpha)\;.$$
For more on Fourier transform see \cite{Stefankovic}.

We say that a function $h$ $\epsilon$-approximates a function $f$
if $\Pr_x[f(x) \neq h(x)]\leq \epsilon$.

\begin{definition}\label{def:gamma-approx}
Following \cite{KaufmanLovett08} we say that the distribution
induced by a set of functions $\{h_i\}_{i=1}^{m}$ (all from $\F^n$
to $\F$) is $\gamma$ close to the uniform distribution if for
every $\alpha_1,\ldots,\alpha_m \in \F$ it holds that $$\left|
\Pr_{x \in \F^n}[\forall 1\leq i \leq m,\; h_i(x) = \alpha_i] -
|\F|^{-m}\right| \leq \gamma|\F|^{-m} \, .$$
\end{definition}

The following well known lemma bounds the distance between
distributions using the Fourier transform.

\begin{lemma}\label{lem:XOR}
For $i =1\ldots m$ let $h_i:\F^n \to \F$ be a function. Then, the
distribution induced by the $h_i$-s is $\gamma$ close to uniform
if for every nontrivial linear combination
$h_\alpha=\sum_{i=1}^{m} \alpha_i h_i$,  we have that
$\bias(h_\alpha) \leq \gamma/|\F|^{3m/2}$.
\end{lemma}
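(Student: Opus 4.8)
The plan is to reduce the statement about the joint distribution of $h_1,\ldots,h_m$ to a statement about the biases of their linear combinations via Fourier inversion on the group $\F^m$. First I would fix target values $\alpha_1,\ldots,\alpha_m \in \F$ and write the indicator of the event $\{\forall i,\ h_i(x)=\alpha_i\}$ using characters: for each $i$,
\[
\mathbf{1}[h_i(x)=\alpha_i] \;=\; \frac{1}{|\F|}\sum_{\beta_i \in \F} \omega^{\beta_i(h_i(x)-\alpha_i)}.
\]
Multiplying these out and taking expectation over $x \in \F^n$, I would obtain
\[
\Pr_{x}[\forall i,\ h_i(x)=\alpha_i] \;=\; \frac{1}{|\F|^m}\sum_{\beta \in \F^m} \omega^{-\sum_i \beta_i\alpha_i}\,\E_x\!\left[\omega^{\sum_i \beta_i h_i(x)}\right].
\]

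Next I would isolate the term $\beta = 0$, which contributes exactly $|\F|^{-m}$ — this is the uniform-distribution value. Subtracting it and applying the triangle inequality gives
\[
\left|\Pr_{x}[\forall i,\ h_i(x)=\alpha_i] - |\F|^{-m}\right| \;\le\; \frac{1}{|\F|^m}\sum_{0 \neq \beta \in \F^m} \left|\E_x\!\left[\omega^{h_\beta(x)}\right]\right| \;=\; \frac{1}{|\F|^m}\sum_{0 \neq \beta \in \F^m}\bias(h_\beta),
\]
where $h_\beta = \sum_i \beta_i h_i$ is exactly a nontrivial linear combination of the $h_i$-s. By hypothesis each such term is at most $\gamma/|\F|^{3m/2}$, and there are fewer than $|\F|^m$ nonzero $\beta$, so the whole sum is at most $|\F|^m \cdot \gamma/|\F|^{3m/2}$, giving the bound $\gamma |\F|^{-m/2} \le \gamma |\F|^{-m}$ — wait, that is in the wrong direction; the clean estimate actually yields $\gamma/|\F|^{m/2}$, which is comfortably smaller than the required $\gamma|\F|^{-m}$ only when... so in fact the correct accounting is that the number of nonzero $\beta$ is $|\F|^m - 1 < |\F|^m$, and $|\F|^m \cdot |\F|^{-3m/2} = |\F|^{-m/2} \le |\F|^{-m}\cdot|\F|^{m/2}$; the exponent $3m/2$ in the hypothesis is precisely chosen so that after summing $|\F|^m$ terms one is left with $|\F|^{-m/2}\gamma$, and one then uses $|\F|^{-m/2} \le |\F|^{-m}$ is false for $m\ge 1$ — so the intended reading is simply that $\gamma|\F|^{-m/2} \le \gamma|\F|^{-m}$ is not what is claimed; rather the conclusion "$\gamma$ close" tolerates the slack, i.e. one proves closeness with parameter $\gamma|\F|^{-m/2}\cdot|\F|^{m}= \gamma|\F|^{m/2}$, no — I would simply carry the constant through honestly and observe that the stated hypothesis with exponent $3m/2$ gives total error $\le \gamma |\F|^{-m/2}$, which certainly implies $\le \gamma |\F|^{-m}$ fails; the resolution is that Definition \ref{def:gamma-approx} asks for error $\le \gamma|\F|^{-m}$, so one needs the per-term bound $\gamma |\F|^{-2m}$, not $|\F|^{-3m/2}$ — I will therefore recheck the bookkeeping against the paper's normalization rather than guess.

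The only genuine obstacle here is this constant-chasing: getting the exponent in "$\bias(h_\beta) \le \gamma/|\F|^{3m/2}$" to line up with the "$\le \gamma|\F|^{-m}$" in Definition \ref{def:gamma-approx}. The mathematical content — Fourier expansion of the indicator, isolating the trivial character, triangle inequality — is completely routine; the point is purely to track the $|\F|$-powers, noting that there are at most $|\F|^m$ nonzero frequencies $\beta$, that each $h_\beta$ is a nontrivial linear combination, and that $\bigl|\E_x[\omega^{h_\beta(x)}]\bigr| = \bias(h_\beta)$ by Definition \ref{def:bias}. I would write the proof in the forward direction exactly as above and simply insert whatever exponent makes $(\text{number of terms}) \times (\text{per-term bound}) \le \gamma|\F|^{-m}$, which with $|\F|^m$ terms is $\gamma|\F|^{-2m}$; since the paper writes $3m/2$ it presumably bounds the number of nonzero combinations more cleverly or uses a slightly different convention, and I would reconcile that detail before finalizing.
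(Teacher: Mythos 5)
Your Fourier-inversion approach is correct and, when the arithmetic is carried through properly, in fact yields a \emph{sharper} bound than what Definition~\ref{def:gamma-approx} asks for. The confusion in the second half of your write-up is caused by a single bookkeeping slip: you dropped the leading factor $|\F|^{-m}$ in front of the sum when computing the final bound. From your own display,
\[
\left|\Pr_{x}[\forall i,\ h_i(x)=\alpha_i] - |\F|^{-m}\right|
\;\le\; \frac{1}{|\F|^m}\sum_{0 \neq \beta \in \F^m}\bias(h_\beta)
\;\le\; \frac{1}{|\F|^m}\cdot|\F|^m\cdot\frac{\gamma}{|\F|^{3m/2}}
\;=\; \frac{\gamma}{|\F|^{3m/2}}
\;\le\; \frac{\gamma}{|\F|^{m}},
\]
since $3m/2\ge m$. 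So the hypothesized exponent $3m/2$ is already sufficient for your argument; your claim that the per-term bound would need to be $\gamma|\F|^{-2m}$ arose from comparing $\gamma|\F|^{-m/2}$ (the value of the bare sum, without its $|\F|^{-m}$ prefactor) against the target $\gamma|\F|^{-m}$. Reinsert the prefactor and the proof closes with no adjustment to the hypothesis.

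As for the comparison with the paper: you prove the pointwise bound directly via the triangle inequality on $\hat f$, whereas the paper bounds the $\ell^1$ distance $\sum_y |f(y)-|\F|^{-m}|$ by first passing to $\ell^2$ via Cauchy--Schwarz, then applying Parseval to convert to $\sum_{\alpha\ne 0}\bias(h_\alpha)^2$, and finally observing that the $\ell^1$ sum dominates each individual term. Both are standard and both need the same hypothesis: your route multiplies $|\F|^m$ terms each of size $\gamma|\F|^{-3m/2}$ against a prefactor $|\F|^{-m}$, the paper's route multiplies $|\F|^m$ squared terms each of size $\gamma^2|\F|^{-3m}$ and picks up a factor $|\F|^m$ from Cauchy--Schwarz, so the exponent $3m/2$ is exactly tuned for their computation while yours has an extra factor $|\F|^{m/2}$ of slack. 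Either way, the content of the lemma is the same Fourier-analytic fact and your version is, if anything, slightly more elementary.
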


\begin{proof}
Let $H:\F^n \to \F^m$ be defined as $H(x) =
(h_1(x),\ldots,h_m(x))$. For $y \in \F^m$ let $f(y)=\Pr_{x\in
\F^n}[H(x)=y]$. We have that
\begin{eqnarray*}
|\hat{f}(\alpha)| &=& \left|\E_{y\in
\F^m}\left[f(y)\overline{\chi_{\alpha}(y)}\right]\right| =
\left|\E_{y \in
\F^m}\left[\Pr_{x\in \F^n}[H(x)=y]\overline{\chi_{\alpha}(y)}\right]\right| \\
&=& |\F|^{-n-m}\left|\sum_{x \in \F^n}\chi_{\alpha}[H(x)]\right| =
|\F|^{-m}\bias\left(\sum_{i=1}^{m}\alpha_i h_i\right)\;.
\end{eqnarray*}
Therefore,
\begin{eqnarray*} \left(\sum_{y \in \F^m}\left|f(y) - |\F|^{-m}\right|\right)^{2}
&\leq& |\F|^m\sum_{y \in \F^m}\left|f(y) - |\F|^{-m}\right|^2
\\&=& |\F|^m\sum_{y \in \F^m} f(y)^2 -
2|\F|^{-m}f(y) +
|\F|^{-2m}\\
&=& \left(\sum_{\alpha \in \F^m} |\F|^{2m}\hat{f}(\alpha)^2
\right)- 1 =\left(\sum_{0\neq \alpha \in \F^m}
|\F|^{2m}\hat{f}(\alpha)^2 \right)
\\&=& \sum_{0\neq \alpha \in \F^m}
\bias\left(\sum_{i=1}^{m}\alpha_i h_i\right)^2 <
|\F|^{-2m}\gamma^2 \;.
\end{eqnarray*}
Hence, for every $y \in \F^m$ it holds that $|f(y) - |\F|^{-m}|<
|\F|^{-m}\gamma$, which is what we wanted to prove.
\end{proof}

\subsection{Subadditive functions}

As described in Section~\ref{sec:intro:proof technique} our proofs
are based on finding a structure for the space of partial
derivatives of the underlying polynomial $f$. For this end we need
a special case of the Bogolyubov-Chang lemma (see e.g.
\cite{Green}).

For a set $A \subseteq \F^n$ denote with $kA-kA$ the set $$kA-kA =
\{ a_1+\ldots+a_k - a_{k+1}-\ldots-a_{2k} \mid \forall i \; a_i
\in A\}\;.$$

\begin{lemma}[Bogolyubov-Chang]
\label{lem:B-C}  Let $A \subseteq U$ be a subset of a linear space
$U$ such that $|A|=\mu_0\cdot |U|$. Then, for some $k \leq
\max(1,\lceil\frac{1}{2}(\log_{\frac{|\F|}{|\F|-1/2}}(2/\mu_0)+2)\rceil)$,
$kA-kA$ contains a subspace $W$ of co-dimension at most
$\log_{\frac{|\F|-1/2}{|\F|-1}}(1/2\mu_0)$.
\end{lemma}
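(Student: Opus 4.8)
The plan is to prove this by an iterated Fourier-analytic dichotomy: each iteration either finishes the argument or passes to a coset on which $A$ has become strictly denser. I will produce a decreasing chain of affine subspaces $U_0 = U \supseteq U_1 \supseteq U_2 \supseteq \cdots$, with $\mathrm{codim}(U_i)=i$, together with the sets $A_i = A \cap U_i$ of density $\mu_i$ in $U_i$, maintaining $\mu_i \ge \mu_0$ and in fact showing that $\mu_i$ grows geometrically. After translating I may treat each $U_i$ as a linear subspace $W_i$ and $A_i$ as a subset of density $\mu_i$, using that $kA_i - kA_i$ is unchanged under translating $A_i$ and that $A_i \subseteq A$ forces $kA_i - kA_i \subseteq kA - kA$.

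The single iteration is the heart. Fix the parameter $k$ and form the $2k$-fold convolution $g = f_i * \cdots * f_i * \tilde f_i * \cdots * \tilde f_i$ ($k$ copies of $f_i = 1_{A_i}$ and $k$ of $\tilde f_i(x) = f_i(-x)$): then $g \ge 0$ is supported exactly on $kA_i - kA_i$ and $\hat g(\alpha) = |\hat f_i(\alpha)|^{2k}$, so Fourier inversion together with Parseval, $\sum_\alpha |\hat f_i(\alpha)|^2 = \mu_i$, gives for every $x \in W_i$
\[
 g(x) \;=\; \mu_i^{2k} + \sum_{\alpha \neq 0} |\hat f_i(\alpha)|^{2k}\chi_\alpha(x) \;\ge\; \mu_i^{2k} - \Big(\max_{\alpha\ne 0}|\hat f_i(\alpha)|\Big)^{2k-2}\mu_i .
\]
If $\max_{\alpha\ne 0}|\hat f_i(\alpha)| < \mu_i^{(2k-1)/(2k-2)}$ then $g>0$ everywhere on $W_i$, so $kA_i - kA_i = W_i$, and this subspace of co-dimension $i$ sits inside $kA - kA$: we stop. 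Otherwise there is a nonzero $\alpha^\ast$ with $|\hat f_i(\alpha^\ast)| \ge \mu_i^{(2k-1)/(2k-2)}$, and the standard density-increment estimate — writing $\hat f_i(\alpha^\ast) = |\F|^{-1}\sum_c (\mu_c - \mu_i)\bar\omega^c$ over the hyperplane cosets $\{x : \langle \alpha^\ast, x\rangle = c\}$ with densities $\mu_c$, and using that at most $|\F|-1$ of the $\mu_c$ can exceed their mean $\mu_i$ — yields a coset on which $A_i$ has density at least $\mu_i + \tfrac{|\F|}{2(|\F|-1)}\,|\hat f_i(\alpha^\ast)|$. Take $U_{i+1}$ to be that coset; then $\mu_{i+1} \ge \mu_i\big(1 + \tfrac{|\F|}{2(|\F|-1)}\,\mu_i^{1/(2k-2)}\big)$. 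Once $\mu_i > 1/2$ we also stop, since then $A_i - A_i = W_i \subseteq kA - kA$ by pigeonhole.

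It remains to choose $k$ and count iterations. Since $\mu_i \ge \mu_0$, the per-step multiplicative gain $1 + \tfrac{|\F|}{2(|\F|-1)}\mu_i^{1/(2k-2)}$ is smallest at the start, so I would pick $k$ just large enough that $\mu_0^{1/(2k-2)}$ is at least about $(|\F|-1/2)/|\F|$ — which is exactly $k \approx 1 + \tfrac12 \log_{|\F|/(|\F|-1/2)}(2/\mu_0)$, the stated value — making every gain at least $\beta := \tfrac{|\F|-1/2}{|\F|-1}$. The density then multiplies by at least $\beta$ at each step while staying below $1/2$, so the process halts after at most $\log_{\beta}(1/2\mu_0)$ steps, at which point we have exhibited a subspace of $kA - kA$ of co-dimension at most $\log_{\beta}(1/2\mu_0)$. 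The delicate part — the main obstacle — is precisely this bookkeeping: the value of $k$, the Fourier threshold $\mu_i^{(2k-1)/(2k-2)}$ in the dichotomy, and the density-increment constant $\tfrac{|\F|}{2(|\F|-1)}$ all have to be balanced so that the bound on $k$ and the bound on the co-dimension emerge simultaneously with the exact stated bases, rather than merely up to constant factors.
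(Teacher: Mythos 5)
Your proposal is correct and follows essentially the same route as the paper: an iterated Fourier-analytic density increment on a chain of co-dimension-one (affine, then translated to linear) subspaces, terminating either when the density exceeds $1/2$ or when no large Fourier coefficient remains, at which point the $2k$-fold convolution bound $g(x)\ge \mu^{2k}-(\max_{\alpha\ne 0}|\hat f(\alpha)|)^{2k-2}\mu>0$ gives $kA_i-kA_i=W_i$. The only cosmetic differences are that you fix $k$ once at the outset in terms of $\mu_0$ and use the threshold $\mu_i^{(2k-1)/(2k-2)}$, while the paper uses the threshold $\rho(\mu)=\frac{|\F|-1/2}{|\F|}\mu$ and chooses $k$ in terms of the terminal density; both bookkeepings yield the same stated bounds on $k$ and on the co-dimension.
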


For completeness we give the proof here.

\begin{proof}
For $\mu \in (0,1)$ define  $\rho(\mu) = \frac{|\F| -
1/2}{|\F|}\cdot \mu$. We shall think of $A$ also as the
characteristic function of the set $A$ and denote with
$\{\hat{A}(\alpha)\}$ its fourier coefficients. Assume that there
is some $\alpha \neq 0$ such that $|\hat{A}(\alpha)| \geq
\rho(\mu_0)$. This means that there is some (affine) subspace $W$
of co-dimension at most one such that
$$|A\cap W|/|W| \geq \rho(\mu_0) \cdot |\F|/(|\F|-1) = \frac{|\F|
- 1/2}{|\F|-1}\cdot \mu_0 = (1+\epsilon) \mu_0\;,$$ where
$\epsilon =\frac{1}{2|\F|-2}$. In other words, the density of $A$
on $W$ is $(1+\epsilon)$ larger than its density over the entire
space. We continue restricting $A$ to co-dim one subspaces
(updating $\mu$ and considering $\rho(\mu)$ at each step) until
after at most $t=\log_{\frac{|\F|-1/2}{|\F|-1}}(1/2\mu_0)$ steps
we reach one of two possibilities. Either we get a subspace
$V\subseteq U$ of co-dimension at most $t$ such that $|A \cap V| >
|V|/2$, or $\widehat{A \cap V}(\alpha) < \rho(\mu)$ for every
$\alpha \neq 0$, where $\mu_0<\mu = |A \cap V|/|V|$. In the first
case it is clear that $(A \cap V) + (A \cap V) = V$ and so we
found a subspace $V$ of co-dimension at most $t$ contained in
$A+A$. In the second case where all the non-zero Fourier
coefficients are smaller than $\rho(\mu)$ we show that for
$k=\lceil
\frac{1}{2}(\log_{\frac{|\F|}{|\F|-1/2}}(2/\mu)+2)\rceil$ it holds
that $k(A \cap V) - k(A \cap V) = V$. For this end we follow the
proof of Lemma 4.4 in \cite{Green}. Let $B=A \cap V$. For $x \in
V$ denote with $r_k(x)$ the number of representations of $x$ as
$a_1+\ldots+a_k-a'_1-\ldots-a'_k$ where the $a_i$-s and $a'_i$-s
are from $B$. Clearly, $r_k(x)$ is equal to the sum, over all
$(y_1,\ldots,y_k,z_1,\ldots,z_{k-1})\in B^{2k-1}$, of $
A(y_1)\cdot A(y_2)\cdot\ldots\cdot A(y_k)\cdot
A(z_1)\cdot\ldots\cdot A(z_{k-1})\cdot A(y_1+\ldots+y_{k}-
z_1-\ldots-z_{k-1}-x)$. Writing the Fourier expansion $A$ and
using routine calculations we conclude that $$ r_k(x) =
|\F|^{(2k-1)n} \cdot
\sum_{\alpha}|\hat{B}(\alpha)|^{2k}\chi_\alpha(x) > |\F|^{(2k-1)n}
\cdot \left(\hat{B}(0)^{2k} - \sum_{\alpha\neq
0}|\hat{B}(\alpha)|^{2k}\right)\geq$$
$$|\F|^{(2k-1)n} \cdot \left(\hat{B}(0)^{2k} -
\rho(\mu)^{2k-2}\sum_{\alpha}|\hat{B}(\alpha)|^{2}\right) =
|\F|^{(2k-1)n} \cdot \left(\mu^{2k} - \rho(\mu)^{2k-2} \mu\right)
> 0\;,$$ where the last inequality follows from the choice of $k$
(we also used the fact that $A$ is a $0/1$ function). In
particular, $V \subseteq kA-kA$ as needed.
\end{proof}

We will mainly apply the lemma on sets $A \subseteq \F^n$
containing all directions where the partial derivatives of our
underlying polynomial $f$ are either very biased or have a high
Gowers norm. More generally we define the notion of a subadditive
function below.

\begin{definition}\label{def:sub additive}
Let $V \subset \F^n$ be a linear space. $\mathcal{F}:V \to \R^+$
is a subadditive function if for every $u,v \in V$ and $\alpha \in
\F$ it holds that $\mathcal{F}(\alpha\cdot u+v) \leq
\mathcal{F}(u)+\mathcal{F}(v)$.
\end{definition}

\begin{lemma}
\label{lem:subadditive} Let $\mathcal{F}:U\rightarrow\mathbb{R}^+$
be a subadditive function. Define, $A_{r}\triangleq\left\{ x\in
U\mid\mathcal{F}(x)\leq r\right\} $. If $|A_{r}|\geq \mu|U|$, then
there exists a vector space $V$ of co-dimension at most
$\log_{\frac{|\F|-1/2}{|\F|-1}}(1/2\mu) = O(\log(1/\mu))$ such
that for every $y \in V$ it holds that ${\cal F}(y) \leq 2r\cdot
\lceil\frac{1}{2}(\log_{\frac{|\F|}{|\F|-1/2}}(2/\mu)+2)\rceil+2r
= O(r \log(1/\mu))$.
\end{lemma}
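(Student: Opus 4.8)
The plan is to apply the Bogolyubov--Chang lemma (Lemma~\ref{lem:B-C}) directly to the sublevel set $A_r$ and then propagate the membership in $kA_r-kA_r$ through the subadditivity of $\mathcal{F}$. Concretely, I would set $\mu_0 \triangleq |A_r|/|U| \geq \mu$ and invoke Lemma~\ref{lem:B-C} with $A = A_r \subseteq U$: this produces an integer $k \leq \max\bigl(1,\lceil \tfrac12(\log_{|\F|/(|\F|-1/2)}(2/\mu_0)+2)\rceil\bigr)$ together with a subspace $W \subseteq kA_r-kA_r$ of co-dimension at most $\log_{(|\F|-1/2)/(|\F|-1)}(1/2\mu_0)$. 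The space $V$ in the statement will be exactly this $W$.

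Next I would bound $\mathcal{F}$ on $V$. Every $y \in V$ lies in $kA_r-kA_r$, so $y = a_1+\cdots+a_k - a_{k+1}-\cdots-a_{2k}$ with each $a_i \in A_r$, i.e. $\mathcal{F}(a_i) \leq r$. Applying Definition~\ref{def:sub additive} repeatedly --- first with coefficient $\alpha=1$ to show by induction on $j$ that $\mathcal{F}(a_1+\cdots+a_j) \leq \sum_{i \leq j}\mathcal{F}(a_i)$, and then with coefficient $\alpha=-1$ (legitimate since $\F$ is prime, hence $-1 \in \F$) to absorb $a_{k+1},\ldots,a_{2k}$ one at a time --- yields $\mathcal{F}(y) \leq \sum_{i=1}^{2k}\mathcal{F}(a_i) \leq 2kr$.

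Finally I would reconcile the quantitative bounds with those in the statement. Since the bases $|\F|/(|\F|-1/2)$ and $(|\F|-1/2)/(|\F|-1)$ both exceed $1$, the logarithms appearing above are increasing in their arguments; as $\mu_0 \geq \mu$, I may replace $\mu_0$ by $\mu$ everywhere at the cost of only weakening the estimates. This gives $\mathrm{codim}(V) \leq \log_{(|\F|-1/2)/(|\F|-1)}(1/2\mu) = O(\log(1/\mu))$ and, after absorbing the $\max(1,\cdot)$ into an extra additive $+2r$, $\mathcal{F}(y) \leq 2r\lceil \tfrac12(\log_{|\F|/(|\F|-1/2)}(2/\mu)+2)\rceil + 2r = O(r\log(1/\mu))$ for every $y \in V$, which is the claim.

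I do not expect a genuine obstacle here: the lemma is essentially a repackaging of Lemma~\ref{lem:B-C}. The only points requiring a little care are (i) iterating the subadditivity inequality correctly over a $2k$-fold \emph{signed} sum --- which is precisely where the hypothesis that $\F$ is prime (so that $-1$ is an admissible coefficient) enters --- and (ii) checking the monotonicity in $\mu$ so that the clean bounds in the statement follow from the slightly sharper bounds produced by Lemma~\ref{lem:B-C}.
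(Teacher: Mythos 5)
Your proposal is correct and follows essentially the same route as the paper: apply Lemma~\ref{lem:B-C} to $A_r$, take $V$ to be the resulting subspace contained in $kA_r-kA_r$, and iterate the subadditivity inequality over the $2k$-fold signed sum to get $\mathcal{F}(y)\leq 2kr$. The paper simply states this is ``immediate''; you spell out the induction over the signed sum and the monotonicity that lets one replace $\mu_0$ by $\mu$, but these are minor elaborations rather than a different argument (and the parenthetical about $\F$ being prime is unnecessary, since $-1\in\F$ for any field and Definition~\ref{def:sub additive} already allows arbitrary $\alpha\in\F$).
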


\begin{proof}
The proof is immediate from Lemma~\ref{lem:B-C}. Let $V$ be the
subspace guaranteed by the lemma when applied on $A_r$. As $V
\subseteq kA_r-kA_r$, for $k \leq
\max(1,\lceil\frac{1}{2}(\log_{\frac{|\F|}{|\F|-1/2}}(2/\mu)+2)\rceil)$,
we get that ${\cal F}(y) \leq 2kr$ for every $y\in V$.
\end{proof}

A typical example of a subadditive function will be the rank of a
quadratic polynomial.

\begin{definition}\label{def:rank of quadratic}
Let $q$ be a degree two function over a prime field $\F$. We
define $\rank_2(q) = r$, where $r$ is the number of $\alpha_i$-s
that are non zero when considering the canonical representation of
$q$ in Theorem~\ref{thm:Dickson}.
\end{definition}

The following lemma is immediate.

\begin{lemma}\label{lem: rank is subadditive}
For two quadratic polynomials $q,q'$ and a constant $\alpha \in
\F$ we have that $\rank_2(q + \alpha q') \leq \rank_2(q) +
\rank_2(q')$.
\end{lemma}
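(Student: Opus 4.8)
The plan is to linearize the statement by associating to each quadratic function $q$ its symmetric bilinear form $B_q(x,y) \eqdef q(x+y) - q(x) - q(y) + q(0)$, and then to deduce the subadditivity of $\rank_2$ from the (elementary) subadditivity of matrix rank. The first thing I would record is that the assignment $q \mapsto B_q$ is \emph{linear}: straight from the definition, $B_{q+\alpha q'} = B_q + \alpha B_{q'}$, and $B_\ell \equiv 0$ for any affine $\ell$. I would also note that under an invertible linear change of variables $T$ one has $B_{q\circ T}(x,y) = B_q(Tx,Ty)$, i.e. the matrix of $B_q$ changes by congruence ($M \mapsto T^{t}MT$), so its rank $\rank(B_q)$ is an invariant of $q$ that does not depend on the choice of coordinates.

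The key step — and the only place where Theorem~\ref{thm:Dickson} is used — is to identify $\rank(B_q)$ with $\rank_2(q)$ up to a field-dependent constant. Applying the canonical form: if $\chr(\F)$ is odd and $q\circ T = \sum_i \alpha_i x_i^2 + \ell$, then $B_{q\circ T}$ is the diagonal form $\sum_i 2\alpha_i\, x_i y_i$; since $2 \neq 0$ in $\F$, its rank equals the number of nonzero $\alpha_i$, so $\rank(B_q) = \rank_2(q)$. If $\chr(\F) = 2$ and $q\circ T = \sum_i \alpha_i x_{2i-1}x_{2i} + \ell$, then $B_{q\circ T}$ is block diagonal with one rank-two $2\times 2$ block (zero diagonal, off-diagonal entries $\alpha_i$) for each nonzero $\alpha_i$, so $\rank(B_q) = 2\,\rank_2(q)$. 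In both cases $\rank_2(q) = c_{\F}\cdot \rank(B_q)$ with $c_\F \in \{1, \tfrac12\}$ depending only on $\F$. I expect this to be the main obstacle only in the bookkeeping sense: it is immediate from the explicit shape of the canonical form together with the assertion in Theorem~\ref{thm:Dickson} that the number of nonzero $\alpha_i$ is an invariant of $q$ (which is exactly what makes $\rank_2$ well defined and the computation coordinate-free); the one thing to double-check is that the affine part $\ell$ contributes nothing, which follows since $B_\ell \equiv 0$.

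Putting the pieces together finishes the proof. The case $\alpha = 0$ is trivial, and otherwise
\[
\rank_2(q + \alpha q') \;=\; c_\F\,\rank\bigl(B_{q+\alpha q'}\bigr) \;=\; c_\F\,\rank\bigl(B_q + \alpha B_{q'}\bigr) \;\le\; c_\F\bigl(\rank(B_q) + \rank(B_{q'})\bigr) \;=\; \rank_2(q) + \rank_2(q'),
\]
where the inequality is the standard subadditivity $\rank(M+N) \le \rank(M) + \rank(N)$ of matrix rank (valid over any field), applied to the matrices of $B_q$ and $\alpha B_{q'}$.
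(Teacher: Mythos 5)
Your proof is correct. The paper itself offers no argument for this lemma---it is dismissed as ``immediate'' after Definition~\ref{def:rank of quadratic}---so there is nothing to compare against, but your route is exactly the natural one: linearize by passing to the associated symmetric bilinear form $B_q$, observe $q\mapsto B_q$ is linear and kills affine functions, use Theorem~\ref{thm:Dickson} to identify $\rank_2(q)$ with $\rank(B_q)$ (odd characteristic) or $\tfrac12\rank(B_q)$ (characteristic $2$), and conclude from subadditivity of matrix rank. The identity $B_{q\circ T}(x,y)=B_q(Tx,Ty)$ is the right observation to justify that the identification is coordinate-free, and you correctly handle the $\alpha=0$ versus $\alpha\neq 0$ cases (nonzero scalars preserve matrix rank). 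It is worth noting that the paper effectively re-derives the same matrix-rank identification later, inside the proof of Lemma~\ref{lem:rank implies regular}, where it shows $\rank(A+A^t)/2\le\rank_2(q)\le\rank(A+A^t)$ for any matrix $A$ representing $q$; your $B_q$ is precisely the form with matrix $A+A^t$, so your argument is fully consistent with the paper's own toolkit. The only (very minor) simplification you could make is to skip the congruence-invariance remark entirely: since $B_q$ is defined pointwise from the function $q$, it is automatically an invariant of $q$, and you only ever evaluate its rank through the canonical form.
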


A more interesting example is given in the following lemma.

\begin{lemma}\label{lem:subadditivity of derivatives}
Let $f$ be a cubic polynomial over a prime field $\F$. For every
$y \in \F^n$ define ${\cal F}(y) = \rank_2(\Delta_y(f))$. Then
$\cal F$ is a subadditive function.
\end{lemma}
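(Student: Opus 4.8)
The goal is to show that $\mathcal{F}(y) = \rank_2(\Delta_y(f))$ is subadditive, i.e. that for all $u, v \in \F^n$ and $\alpha \in \F$ we have $\mathcal{F}(\alpha u + v) \leq \mathcal{F}(u) + \mathcal{F}(v)$. The natural first step is to understand how the discrete derivative behaves under the operation $u \mapsto \alpha u + v$. The key algebraic identity is that for a cubic polynomial $f$, the second derivative $\Delta_u \Delta_v(f)$ is a \emph{linear} function (since $\deg f = 3$), and more importantly it is \emph{bilinear} in the two directions $u$ and $v$ in an appropriate sense: $\Delta_{u+v}(f) = \Delta_u(f) + \Delta_v(f) + \Delta_u\Delta_v(f)$ as functions of $x$ (up to the base point), and iterating, $\Delta_{\alpha u + v}(f)$ decomposes into $\Delta_v(f)$ plus a sum of terms each of which is (a scalar multiple of) either $\Delta_u(f)$ or an iterated derivative $\Delta_u \Delta_u (f)$, $\Delta_u \Delta_v(f)$, etc.

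\textbf{Key steps.} First I would write out, for the cubic $f$, the exact expansion of $\Delta_{\alpha u + v}(f)(x) = f(x + \alpha u + v) - f(x)$ by telescoping through the intermediate points $x + v$ and then incrementing by $\alpha u$ one unit at a time (or, in odd characteristic, using the multilinear expansion of $f$ directly). The upshot should be an expression of the form $\Delta_{\alpha u + v}(f) = \Delta_v(f) + (\text{stuff built from } \Delta_u(f) \text{ and iterated derivatives in the } u \text{ direction})$. Since $\deg(\Delta_u f) \le 2$, the iterated derivative $\Delta_u\Delta_u(f)$ has degree $\le 1$ and hence $\rank_2 = 0$; similarly any mixed term $\Delta_u \Delta_v(f)$ is linear. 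Second, I would invoke Lemma~\ref{lem: rank is subadditive}: $\rank_2$ is subadditive for linear combinations of quadratics, and adding a linear function does not change $\rank_2$ (it only affects the $\ell(x)$ part in Theorem~\ref{thm:Dickson}). Therefore $\rank_2(\Delta_{\alpha u + v}(f)) \le \rank_2(\Delta_v(f)) + \rank_2(c \cdot \Delta_u(f))$ for the relevant scalar $c$, and since $\rank_2(c \cdot \Delta_u f) \le \rank_2(\Delta_u f)$, we are done.

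\textbf{Main obstacle.} The delicate point is the characteristic-$2$ (and small characteristic) case, where $\alpha \in \{0, 1\}$ so the scalar multiplication is trivial but one must be careful that the telescoping identity for $\Delta_{u+v}(f)$ genuinely produces only \emph{quadratic} correction terms whose ranks add, rather than something that could interact badly with the reduction modulo $x_i^p - x_i$. Concretely, I need that $\Delta_{u+v}(f) = \Delta_u(f) + \Delta_v(f) + \Delta_u\Delta_v(f)$ holds as an identity of \emph{functions}, and that $\Delta_u\Delta_v(f)$ has degree $\le 1$ (true since $\deg f = 3$), so it contributes nothing to $\rank_2$. For general $\alpha$ in odd characteristic, one similarly checks that $\Delta_{\alpha u}(f) = \sum$ of derivatives that, after collecting, equals $\alpha \cdot \Delta_u(f)$ plus lower-degree terms; the cleanest route is to use that over a field of odd characteristic a cubic can be taken multilinear-plus-squares, making the dependence on $\alpha$ explicitly polynomial and the quadratic part of $\Delta_{\alpha u}(f)$ literally $\alpha$ times the quadratic part of $\Delta_u(f)$. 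Once this bookkeeping is in place, the subadditivity follows immediately from Lemmas~\ref{lem: rank is subadditive} and the rank-invariance of $\rank_2$ under addition of linear forms.
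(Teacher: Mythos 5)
Your proposal is correct and takes essentially the same route as the paper: both rest on the identity $\Delta_{y+z}(f) = \Delta_y(f) + \Delta_z(f) + \Delta_y\Delta_z(f)$, the fact that $\Delta_y\Delta_z(f)$ is linear when $\deg(f)=3$, and the rank-subadditivity / rank-invariance-under-linear-shifts facts (Lemma~\ref{lem: rank is subadditive} and Theorem~\ref{thm:Dickson}). One small point in your favor: the paper's written proof only verifies the $\alpha=1$ case of Definition~\ref{def:sub additive}, whereas you explicitly flag the scalar $\alpha$. Your cleanest resolution of that bookkeeping is the one you gesture at: the degree-$2$-in-$x$ part of $\Delta_y(f)$ is $\F$-linear in $y$ (this is visible directly from the monomial expansion, no polarization needed, and works in every characteristic), so $\rank_2(\Delta_{\alpha u}(f)) = \rank_2(\Delta_u(f))$ for $\alpha \neq 0$; the naive telescoping bound $\mathcal{F}(\alpha u) \le \alpha\,\mathcal{F}(u)$ would not suffice on its own.
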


\begin{proof}
The proof follows from the following simple observation
\begin{eqnarray*}
\Delta_y(f) + \Delta_z(f) &=& f(x+y)-f(x) + f(x+z)-f(x)\\ &=&
f(x+y+z) - f(x) - (f(x+y+z)-f(x+y) - (f(x+z)-f(x))) \\ &=&
\Delta_{y+z}(f)(x) - (\Delta_z(f)(x+y) - \Delta_z(f)(x))\\
&=& \Delta_{y+z}(f)(x) - \Delta_y\Delta_z(f)(x) \;.
\end{eqnarray*}
Indeed, we now get that ${\cal F}(y+z) = \rank_2(\Delta_{y+z}(f))
= \rank_2(\Delta_y(f) + \Delta_z(f) +\Delta_y\Delta_z(f)(x)) =
\rank_2(\Delta_y(f) + \Delta_z(f)) \leq \rank_2(\Delta_y(f)) +
\rank_2(\Delta_z(f))={\cal F}(y)+{\cal F}(z)$, where we used the
fact that adding a linear function to a quadratic polynomial does
not change its rank.
\end{proof}

\section{The structure of cubic polynomials}\label{sec:degree 3}

In this section we prove Theorems~\ref{thm:intro:deg-3:bias} and
\ref{thm:intro:deg-3:gowers}. As described in
Section~\ref{sec:intro:proof technique} both proofs are based on
finding a structure for the space of partial derivatives of $f$.

\subsection{Restricting the polynomial to a `good' subspace}\label{sec:3-good-subspace}

In this section we show that if a cubic $f$ ia biased or have a
large $U^3$ norm then there is a subspace $V \subseteq \F^n$ such
that for every $y \in V$ the rank of $\Delta_y(f)$ is relatively
small. We start by showing that if $f$ is biased or has a high
Gowers norm then so do many of its partial derivatives. The
following lemmas are well known and we prove them here for
completeness.

\begin{lemma}
\label{lem:biased der}Let
$f:\mathbb{F}_{p}^{n}\rightarrow\mathbb{F}_{p}$ be such that
$\bias(f) = \delta$. Then a fraction of at least
$\frac{1}{2}\delta^{2}$ of the partial derivatives $\Delta_{y}(f)$
satisfy $\bias(\Delta_y(f)) \geq \frac{1}{2}\delta^{2}$.
\end{lemma}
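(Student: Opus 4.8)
The plan is to use the standard Fourier/Cauchy–Schwarz averaging argument. Recall that $\bias(f)=\delta$ means $\left|\E_{x}[\omega^{f(x)}]\right|=\delta$, so $\left|\E_{x}[\omega^{f(x)}]\right|^2=\delta^2$. The key identity is that $\left|\E_x[\omega^{f(x)}]\right|^2 = \E_{x,y}[\omega^{f(x+y)-f(x)}] = \E_y\left[\E_x[\omega^{\Delta_y(f)(x)}]\right]$. The inner expectation $\E_x[\omega^{\Delta_y(f)(x)}]$ is a complex number of modulus at most $1$ whose modulus is exactly $\bias(\Delta_y(f))$, and in fact (since $\Delta_0(f)=0$ and more generally these averages are real and nonnegative for this particular quadratic form structure — or one can just take real parts) we may treat $\E_y[\bias(\Delta_y(f))] \ge \delta^2$, or at the very least $\E_y\left[\mathrm{Re}\,\E_x[\omega^{\Delta_y(f)(x)}]\right] = \delta^2$, which suffices since $\mathrm{Re}(z) \le |z|$.

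First I would establish the identity $\delta^2 = \E_y\left[\E_x[\omega^{\Delta_y(f)(x)}]\right]$ by expanding the square of the absolute value of $\E_x[\omega^{f(x)}]$ as a product with its conjugate and substituting $x \mapsto x+y$. Then I would note $\bias(\Delta_y(f)) = \left|\E_x[\omega^{\Delta_y(f)(x)}]\right| \le 1$ for every $y$, and that the average of this quantity (using that the real part is bounded by the modulus) is at least $\delta^2$. Finally I would run a Markov-type / averaging argument: if the fraction of $y$ with $\bias(\Delta_y(f)) \ge \frac12\delta^2$ were less than $\frac12\delta^2$, then $\E_y[\bias(\Delta_y(f))] < \frac12\delta^2 \cdot 1 + 1 \cdot \frac12\delta^2 = \delta^2$, a contradiction. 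Hence at least a $\frac12\delta^2$ fraction of directions $y$ satisfy $\bias(\Delta_y(f)) \ge \frac12\delta^2$.

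The only mildly delicate point — the "main obstacle," though it is minor here — is making sure that taking the real part (rather than the modulus) of $\E_x[\omega^{\Delta_y(f)(x)}]$ does not cost anything in the averaging step. This is handled by observing that $\delta^2 = \left|\E_x[\omega^{f(x)}]\right|^2$ is a nonnegative real number, so its expansion $\E_y\E_x[\omega^{\Delta_y(f)(x)}]$ is real, hence equals $\E_y\left[\mathrm{Re}\,\E_x[\omega^{\Delta_y(f)(x)}]\right]$, and since $\mathrm{Re}(z)\le|z|$ for all $z\in\mathbb{C}$ we get $\E_y[\bias(\Delta_y(f))] \ge \delta^2$; the averaging argument then applies verbatim. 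No deeper structural input (not even Theorem~\ref{thm:Dickson}) is needed for this lemma — it is purely a second-moment computation.
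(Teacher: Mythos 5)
Your proof is correct and follows essentially the same route as the paper: establish $\E_y[\bias(\Delta_y(f))] \geq \delta^2$ via the substitution $x \mapsto x+y$, then conclude by Markov-type averaging using $\bias \leq 1$. The only cosmetic difference is that the paper applies the triangle inequality $\E_y[|Z_y|] \geq |\E_y[Z_y]|$ directly and then factors the double average into $\delta \cdot \delta$, whereas you first prove the exact identity $\delta^2 = \E_{y,x}[\omega^{\Delta_y(f)(x)}]$ and then invoke $\mathrm{Re}(z) \leq |z|$ — these are equivalent steps, and your hedge about taking real parts is exactly the right way to handle the fact that the individual inner averages need not be real.
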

\begin{proof}
We first compute the expected bias of a partial derivative with
respect to a random direction. {\begin{eqnarray*}
\mathbb{E}_{y\in\mathbb{F}^{n}}\left[\mbox{bias}(\Delta_y(f))\right]
& = &
\mathbb{E}_{y\in\mathbb{F}^{n}}\left[\left|\mathbb{E}_{x\in\mathbb{F}^{n}}
\left[\omega^{\Delta_y(f)(x)}\right]\right|\right]
\geq\left|\mathbb{E}_{y\in\mathbb{F}^{n}}
\left[\mathbb{E}_{x\in\mathbb{F}^{n}}\left[\omega^{f(x+y)-f(x)}\right]\right]\right|\\
 & = & \left|\mathbb{E}_{y\in\mathbb{F}^{n},x\in\mathbb{F}^{n}}\left[\omega^{f(x+y)}\omega^{-f(x)}\right]\right|=
 \left|\mathbb{E}_{z\in\mathbb{F}^{n},x\in\mathbb{F}^{n}}\left[\omega^{f(z)}\omega^{-f(x)}\right]\right|\\
 & = & \left|\mathbb{E}_{z\in\mathbb{F}^{n}}\left[\omega^{f(z)}\right]
 \right|\left|\overline{\mathbb{E}_{x\in\mathbb{F}^{n}}\left[\omega^{f(x)}\right]}\right|=
 \delta\cdot\delta=\delta^{2} \;.\end{eqnarray*}
}

Therefore, by the fact that $\mbox{bias}(f)\leq1$, it follows that
$$\Pr_{y\in\mathbb{F}^{n}}\left[\mbox{bias}(\Delta_y(f))>\frac{1}{2}\delta^{2}\right]>\frac{1}{2}\delta^{2} \;.$$
\end{proof}

A similar result holds when $f$ has a high $U^d$ norm.

\begin{lemma}
\label{lem:gowers norm of der}Let
$f:\mathbb{F}_{p}^{n}\rightarrow\mathbb{F}_{p}$ be such that
$\|f\|_{U^d} = \delta$. Then a fraction of at least
$\frac{1}{2}\delta^{2^{d}}$ of the partial derivatives
$\Delta_{y}(f)$ satisfy $\|\Delta_y(f)\|_{U^{d-1}} \geq
\frac{1}{2}\delta^{2}$.
\end{lemma}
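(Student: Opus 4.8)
The plan is to mimic the proof of Lemma~\ref{lem:biased der}, replacing the bias by the (normalized) Gowers expectation. First I would record the commutativity of discrete derivatives: for any directions $y,z$ one has $\Delta_y\Delta_z f = \Delta_z\Delta_y f$ (both sides equal $f(x+y+z)-f(x+y)-f(x+z)+f(x)$), so in particular $\Delta_{y_{d-1}}\cdots\Delta_{y_1}\bigl(\Delta_{y_d} f\bigr) = \Delta_{y_d}\Delta_{y_{d-1}}\cdots\Delta_{y_1} f$. Using this together with the definition of the $U^{d-1}$ norm, $\|\Delta_{y_d} f\|_{U^{d-1}}^{2^{d-1}} = \bigl|\E_{x,y_1,\ldots,y_{d-1}}[\omega^{\Delta_{y_d}\Delta_{y_{d-1}}\cdots\Delta_{y_1}f(x)}]\bigr|$.

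Next I would average over $y_d$ and apply the triangle inequality (pulling the absolute value outside the expectation, exactly as in the proof of Lemma~\ref{lem:biased der}):
\[
\E_{y_d\in\F^n}\Bigl[\|\Delta_{y_d} f\|_{U^{d-1}}^{2^{d-1}}\Bigr]
\;=\;\E_{y_d}\Bigl[\bigl|\E_{x,y_1,\ldots,y_{d-1}}[\omega^{\Delta_{y_d}\cdots\Delta_{y_1}f(x)}]\bigr|\Bigr]
\;\geq\;\bigl|\E_{x,y_1,\ldots,y_d}[\omega^{\Delta_{y_d}\cdots\Delta_{y_1}f(x)}]\bigr|
\;=\;\|f\|_{U^d}^{2^d}\;=\;\delta^{2^d}.
\]
Since each term $\|\Delta_{y_d} f\|_{U^{d-1}}^{2^{d-1}}$ lies in $[0,1]$, a Markov-type averaging argument (if $X\le 1$ and $\E[X]\ge \delta^{2^d}$ then $\Pr[X>\tfrac12\delta^{2^d}]\ge \tfrac12\delta^{2^d}$) shows that for at least a $\tfrac12\delta^{2^d}$ fraction of directions $y$ we have $\|\Delta_y f\|_{U^{d-1}}^{2^{d-1}} > \tfrac12\delta^{2^d}$.

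Finally I would unwind the exponent: for such $y$, $\|\Delta_y f\|_{U^{d-1}} > \bigl(\tfrac12\bigr)^{1/2^{d-1}}\delta^{2^{d}/2^{d-1}} = 2^{-1/2^{d-1}}\,\delta^{2}\ge \tfrac12\delta^{2}$, where the last inequality uses $1/2^{d-1}\le 1$ for $d\ge 1$. This gives the claimed fraction $\tfrac12\delta^{2^d}$ of directions with $\|\Delta_y f\|_{U^{d-1}}\ge\tfrac12\delta^{2}$. I do not expect a genuine obstacle here; the only points requiring care are the commutativity rearrangement of the iterated derivatives so that the averaging identity is exact, and the bookkeeping of the $2^{d-1}$-th powers when converting the Markov bound back into a bound on the norm itself.
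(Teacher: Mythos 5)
Your proof is correct and follows the same route as the paper: pull the absolute value inside the average over $y_d$, recognize the inner expectation as $\|\Delta_{y_d}f\|_{U^{d-1}}^{2^{d-1}}$, and finish with a Markov-type averaging argument on a $[0,1]$-valued quantity. The paper compresses the final exponent bookkeeping into ``as before we get,'' whereas you spell out that $2^{-1/2^{d-1}}\delta^2 \ge \tfrac12\delta^2$; that step is fine and the two arguments are essentially identical.
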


\begin{proof}
The proof is again immediate from the definition.
\begin{eqnarray*} \delta^{2^d} = \|f\|_{U^d}^{2^d} & = &
\left|\E_{x,y_1,\ldots,y_d}\left[\omega^{\Delta_{y_1}\ldots\Delta_{y_d}(f)(x)}\right]\right|\\
&\leq &
\E_{y_d}\left|\E_{x,y_1,\ldots,y_{d-1}}\left[\omega^{\Delta_{y_1}\ldots\Delta_{y_{d-1}}(\Delta_{y_d}(f))(x)}\right]\right|\\
& = & \E_{y}\left[\|\Delta_{y}(f)\|_{U^{d-1}}^{2^{d-1}}\right] \;.
\end{eqnarray*}
As before we get that
$$\Pr_{y\in\mathbb{F}^{n}}\left[\|\Delta_y(f)\|_{U^{d-1}}>\frac{1}{2}\delta^{2}\right]>\frac{1}{2}\delta^{2^{d}}
\;.$$
\end{proof}

We thus see that in both cases a constant fraction of all partial
derivatives of $f$ have high bias or high $U^2$ norm. From
Theorem~\ref{thm:Dickson} we get that if a partial derivative
(which is a quadratic function) has a high bias then it depends on
a few linear functions.

\begin{lemma}\label{lem:rank of biased quadratic}
Let $q$ be a quadratic polynomial over a prime field $\F$. Then
$q$ is a function of at most $\log_{|\F|}(\bias(q))+1$ linear
functions. More accurately, in the notations of
Theorem~\ref{thm:Dickson} the number of non zero $\alpha_i$-s is
at most $\log_{|\F|}(1/\bias(q))$.
\end{lemma}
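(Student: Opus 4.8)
The plan is to reduce everything to the canonical form of Theorem~\ref{thm:Dickson} and then compute the bias of a normalized quadratic form by hand. First I would fix an invertible linear transformation $T$ as in Theorem~\ref{thm:Dickson}. Since $T$ is a bijection of $\F^n$ we have $\bias(q\circ T)=\bias(q)$, and a function of $k$ linear forms pulls back under $T$ to a function of $k$ linear forms, so it suffices to prove both assertions when $q$ is already in canonical form. Write $r=\rank_2(q)$ and, after reindexing the variables, assume the nonzero coefficients are $\alpha_1,\dots,\alpha_r$; thus $q=\sum_{i=1}^r\alpha_i x_i^2+\ell(x)$ when $\chr(\F)$ is odd, and $q=\sum_{i=1}^r x_{2i-1}x_{2i}+\ell(x)$ when $\chr(\F)=2$, with $\ell$ linear.

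Next I would dispose of the linear term and factor the bias. If $\bias(q)=0$ there is nothing to prove, so assume $\bias(q)>0$. In the canonical form each variable outside the quadratic part appears only inside $\ell$, so if $\ell$ had a nonzero coefficient on such a variable then averaging over that single variable would already force $\bias(q)=0$; hence $\ell$ is supported on $\{x_1,\dots,x_r\}$ (respectively $\{x_1,\dots,x_{2r}\}$), and the expectation defining $\bias(q)$ splits as a product over the $r$ independent blocks. For odd characteristic each block is $\E_x[\omega^{\alpha_i x^2+\beta_i x}]$; completing the square (legitimate since $2$ is invertible) rewrites this as $\omega^{c_i}\E_x[\omega^{\alpha_i x^2}]$, and the standard evaluation of the quadratic Gauss sum gives $|\E_x[\omega^{\alpha_i x^2}]|=|\F|^{-1/2}$ whenever $\alpha_i\neq 0$. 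For characteristic two each block is $\E_{a,b}[\omega^{ab+\beta'a+\beta''b}]$, and the substitution $a\mapsto a+\beta''$, $b\mapsto b+\beta'$ turns it into $\omega^{\beta'\beta''}\E_{a,b}[\omega^{ab}]$, of absolute value $\tfrac12=|\F|^{-1}$.

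Multiplying the blocks gives $\bias(q)=|\F|^{-r/2}$ when $\chr(\F)$ is odd and $\bias(q)=|\F|^{-r}$ when $\chr(\F)=2$; in the binary case this is exactly $\rank_2(q)=\log_{|\F|}(1/\bias(q))$, and in general it yields $\rank_2(q)\le 2\log_{|\F|}(1/\bias(q))$. Finally, the canonical $q$ is visibly a function of the $r$ (respectively $2r$) coordinate forms $x_1,\dots,x_r$, so the original $q$ is a function of their images under $T^{-1}$, i.e. of $O(\log_{|\F|}(1/\bias(q)))$ linear functions. The only real content is the Gauss-sum magnitude together with the bookkeeping that absorbs the linear term without changing any absolute value; the rest is just the multiplicativity of the bias under a direct sum of quadratic forms.
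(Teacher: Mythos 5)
The paper gives no proof of this lemma; it simply points to Lemmas 15--17 of Bogdanov--Viola. Your self-contained argument is correct and is the natural direct proof: pass to the Dickson canonical form (bias is preserved by an invertible linear substitution), note that if the linear term touches a variable outside the quadratic support then the average over that coordinate already kills the bias, factor the remaining expectation over the independent blocks, and evaluate each block by a Gauss sum ($|\E_x[\omega^{\alpha x^2}]| = |\F|^{-1/2}$ for odd $\chr(\F)$, $|\E_{a,b}[\omega^{ab}]| = |\F|^{-1}$ for $\F_2$). All of these steps are sound, and this is exactly the computation that the Bogdanov--Viola citation stands in for.

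You were also right to flag the constants rather than forcing them to match. Your calculation gives $\bias(q) = |\F|^{-r/2}$ when $\chr(\F)$ is odd with $r=\rank_2(q)$, so $\rank_2(q) = 2\log_{|\F|}(1/\bias(q))$, a factor of $2$ weaker than the ``more accurately'' clause as written. That clause is in fact too strong: already $q=x_1x_2$ over $\F_p$ with $p$ odd has $\rank_2(q)=2$ but $\bias(q)=1/p$, so $\log_{|\F|}(1/\bias(q))=1<2$. Likewise the first clause (reading $\log_{|\F|}(1/\bias(q))+1$, as surely intended) undercounts in characteristic $2$, where a rank-$r$ form genuinely depends on $2r$ coordinate forms, not on $r+1$. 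The correct statement, which is what you derived, is that $q$ is a function of at most $2\log_{|\F|}(1/\bias(q))$ linear functions and $\rank_2(q)\le 2\log_{|\F|}(1/\bias(q))$. Every downstream use of the lemma only needs $O(\log(1/\delta))$, so the factor of two is harmless, but your proof produces the right constant and the lemma as stated does not.
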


\begin{proof}
See e.g. Lemmas 15-17 of \cite{BogdanovViola07}.
\end{proof}

The next lemma of Bogdanov and Viola \cite{BogdanovViola07} shows
that a similar result holds when a partial derivative has a high
$U^2$ norm.

\begin{lemma}\label{lem:rank of quadratic with high U2}(Lemma 15
of \cite{BogdanovViola07}) Every quadratic polynomial $q$ over a
prime field $\F$ is a function of at most
$\log_{|\F|}(1/\|q\|_{U^2})+ 1$ linear functions. Further, in the
notations of Theorem~\ref{thm:Dickson} the number of non zero
$\alpha_i$-s is at most $\log_{|\F|}(1/\|q\|_{U^2})$.
\end{lemma}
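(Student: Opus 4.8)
The plan is to put $q$ into the canonical form of Theorem~\ref{thm:Dickson} and then evaluate $\|q\|_{U^2}$ in closed form in terms of the rank of the symmetric bilinear form attached to the quadratic part of $q$.

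First I would record two invariances. (i) For any invertible linear map $T$ on $\F^n$ one has $\Delta_{y_1}\Delta_{y_2}(q\circ T)(x)=(\Delta_{Ty_1}\Delta_{Ty_2}q)(Tx)$, and $(x,y_1,y_2)\mapsto(Tx,Ty_1,Ty_2)$ is a bijection of $(\F^n)^3$, so $\|q\circ T\|_{U^2}=\|q\|_{U^2}$. (ii) The operator $\Delta_{y_1}\Delta_{y_2}$ annihilates every polynomial of degree at most $1$, so adding a linear or constant function to $q$ does not change $\|q\|_{U^2}$, and the norm depends only on the quadratic part of $q$ (consistent with the role of $\rank_2$ in Lemma~\ref{lem:subadditivity of derivatives}). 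By Theorem~\ref{thm:Dickson} together with (i) and (ii), I may assume $q$ is the homogeneous form $\sum_{i=1}^{r}\alpha_i x_i^2$ when $\chr(\F)$ is odd, or $\sum_{i=1}^{r} x_{2i-1}x_{2i}$ when $\chr(\F)=2$, where $r=\rank_2(q)$ is the number of nonzero $\alpha_i$'s; by the ``moreover'' clause of Theorem~\ref{thm:Dickson} this is exactly the quantity to be bounded.

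Now the computation. A short manipulation (in the style of the identity proved for Lemma~\ref{lem:subadditivity of derivatives}) shows that for a quadratic form $q$ the iterated derivative $\Delta_{y_1}\Delta_{y_2}(q)(x)=B(y_1,y_2)$ equals the associated symmetric bilinear form and is independent of $x$, so
\[
\|q\|_{U^2}^{4}=\Bigl|\,\E_{y_1,y_2\in\F^n}\bigl[\omega^{B(y_1,y_2)}\bigr]\,\Bigr|.
\]
For fixed $y_2$ the map $y_1\mapsto B(y_1,y_2)$ is $\F$-linear, so the inner average is $1$ when $y_2\in\mathrm{rad}(B)=\{v:B(v,\cdot)\equiv0\}$ and $0$ otherwise; hence the displayed average is $|\mathrm{rad}(B)|/|\F|^n=|\F|^{-\mathrm{rank}(B)}$. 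For either canonical form $B$ is block diagonal with $r$ nonzero blocks, so $\mathrm{rank}(B)\ge\rank_2(q)$ (with equality when $\chr(\F)$ is odd, and $\mathrm{rank}(B)=2r$ when $\chr(\F)=2$), and in canonical form $q$ is visibly a function of at most $\mathrm{rank}(B)+1$ linear forms. Combining with $\|q\|_{U^2}^{4}=|\F|^{-\mathrm{rank}(B)}$, and matching the normalization of the Gowers norm to that of \cite{BogdanovViola07}, yields both $\rank_2(q)\le\log_{|\F|}(1/\|q\|_{U^2})$ and the claimed bound on the number of linear functions.

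The step needing genuine care is the characteristic-two bookkeeping: there $\rank_2(q)=r$ and $\mathrm{rank}(B)=2r$ differ, and one must keep track of how this interacts with the $1/2^{d}$ exponent in the definition of $\|\cdot\|_{U^d}$ (equivalently, with the normalization convention of \cite{BogdanovViola07}, which omits that root) so that the logarithmic bound comes out exactly as stated. Verifying invariance (i) also deserves one careful line, but the remainder is the routine finite-Fourier computation above.
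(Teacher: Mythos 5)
The paper does not give its own proof of this lemma --- it simply cites Lemma~15 of \cite{BogdanovViola07} --- so the only thing to assess is whether your argument is sound, and it is. The core identity $\Delta_{y_2}\Delta_{y_1}q(x)=B(y_1,y_2)$ for the associated symmetric bilinear form (independent of $x$), followed by the orthogonality computation giving $\|q\|_{U^2}^{4}=|\F|^{-\mathrm{rank}(B)}$ and the translation $\mathrm{rank}(B)=\rank_2(q)$ (odd characteristic) or $\mathrm{rank}(B)=2\rank_2(q)$ (characteristic two), is exactly the standard proof. Your two reductions --- $U^2$-invariance under invertible linear substitutions, and the annihilation of affine terms by $\Delta_{y_1}\Delta_{y_2}$ --- are both correct and are the right way to justify passing to the Dickson normal form.

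You are also right to flag, and not sweep under the rug, the normalization mismatch: with the paper's definition $\|q\|_{U^2}=|\E[\cdots]|^{1/4}$ your computation yields $\rank_2(q)=4\log_{|\F|}(1/\|q\|_{U^2})$ (odd characteristic) and $\rank_2(q)=2\log_{|\F|}(1/\|q\|_{U^2})$ (characteristic two), \emph{not} the ``$\le\log_{|\F|}(1/\|q\|_{U^2})$'' claimed. The same discrepancy already appears in the companion Lemma~\ref{lem:rank of biased quadratic}: a rank-$r$ diagonal form over $\F_p$, $p$ odd, has $\bias=p^{-r/2}$ by Gauss sums, so $r=2\log_p(1/\bias)$, again larger than the stated bound. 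These are constant-factor slips coming from the $2^{-d}$ exponent in the Gowers-norm definition (or from a different convention in \cite{BogdanovViola07}); they are harmless for the paper, which only ever uses these lemmas through $O(\cdot)$ bounds (e.g.\ Lemma~\ref{lem:structur of non random cubic} and Corollary~\ref{cor:good subspace of low rank}), but your computation makes the true constants explicit, which is a small improvement in rigor over the cited statement. The only line I would insist you actually write out, rather than gesture at, is the verification that $\Delta_{y_2}\Delta_{y_1}(x^{t}Ax)=y_2^{t}(A+A^{t})y_1$, since the whole argument hangs on the derivative being $x$-free; but that is a one-line expansion.
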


Concluding, we have proved the following lemma (recall
Definition~\ref{def:rank of quadratic}).

\begin{lemma}\label{lem:structur of non random cubic}
Let $f$ be a cubic polynomial.
\begin{enumerate}
\item If $\bias(f) = \delta$, then for (at least) a
$\frac{\delta^2}{2}$ fraction of $y \in \F^n$ it holds that
$\rank_2(\Delta_y(f)) \leq \log_{|\F|}(\frac{2}{\delta^2})$.

\item If $\|f\|_{U^2} = \delta$, then for (at least) a
$\frac{\delta^4}{2}$ fraction of $y \in \F^n$ it holds that
$\rank_2(\Delta_y(f)) \leq \log_{|\F|}(\frac{2}{\delta^2})$.

\end{enumerate}
\end{lemma}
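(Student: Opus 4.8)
The plan is to combine the three ingredients just assembled. First I would apply Lemma~\ref{lem:biased der} (for the bias case) or Lemma~\ref{lem:gowers norm of der} (for the $U^2$ case): if $\bias(f)=\delta$ then at least a $\tfrac12\delta^2$ fraction of directions $y$ satisfy $\bias(\Delta_y(f))\geq\tfrac12\delta^2$, and similarly if $\|f\|_{U^2}=\delta$ then at least a $\tfrac12\delta^4$ fraction of directions satisfy $\|\Delta_y(f)\|_{U^2}\geq\tfrac12\delta^2$. (I should remark that in the statement the hypothesis "$\|f\|_{U^2}=\delta$" must be read as "$\|f\|_{U^3}=\delta$", since $\Delta_y(f)$ is the relevant object whose $U^2$ norm we control, and $\|f\|_{U^2}=\bias(f)$; this is the $d=3$ instance of Lemma~\ref{lem:gowers norm of der}.)

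For each such good direction $y$, the polynomial $\Delta_y(f)$ is quadratic (since $\deg f=3$), so I would feed it into Lemma~\ref{lem:rank of biased quadratic} in the bias case, respectively Lemma~\ref{lem:rank of quadratic with high U2} in the Gowers case. Both lemmas say that a quadratic with the relevant parameter bounded below by $\tfrac12\delta^2$ has, in the canonical Dickson form, at most $\log_{|\F|}\!\big(1/(\tfrac12\delta^2)\big)=\log_{|\F|}(2/\delta^2)$ nonzero $\alpha_i$'s. By Definition~\ref{def:rank of quadratic} this is exactly the statement $\rank_2(\Delta_y(f))\leq\log_{|\F|}(2/\delta^2)$. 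Chaining the "fraction of good $y$" bound with this rank bound gives both items verbatim.

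There is essentially no obstacle here: this lemma is a bookkeeping corollary that packages Lemmas~\ref{lem:biased der}, \ref{lem:gowers norm of der}, \ref{lem:rank of biased quadratic}, and \ref{lem:rank of quadratic with high U2} into the single form that will be used in the next subsection (where one applies the subadditive-function machinery of Lemma~\ref{lem:subadditive}, using Lemma~\ref{lem:subadditivity of derivatives} that $y\mapsto\rank_2(\Delta_y(f))$ is subadditive). The only thing to be slightly careful about is keeping the two chains of constants straight — the extra squaring in the $U^2$-norm chain versus the bias chain — which accounts for the $\delta^2/2$ versus $\delta^4/2$ discrepancy in the fraction of good directions, while the rank bound $\log_{|\F|}(2/\delta^2)$ is the same in both cases because in each case the surviving parameter of $\Delta_y(f)$ is at least $\delta^2/2$.
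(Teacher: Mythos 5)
Your overall strategy — read the lemma as a bookkeeping corollary that chains Lemma~\ref{lem:biased der} (resp.\ Lemma~\ref{lem:gowers norm of der}) with Lemma~\ref{lem:rank of biased quadratic} (resp.\ Lemma~\ref{lem:rank of quadratic with high U2}) — is exactly what the paper intends; the lemma is stated there without any explicit proof, introduced by the word ``Concluding,'' so there is nothing substantively different to compare. The item~1 chain is correct verbatim.

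Two points in your parenthetical remark about item~2 should be fixed. First, the identity $\|f\|_{U^2}=\bias(f)$ is false; the paper only asserts $\|f\|_{U^0}=\|f\|_{U^1}=\bias(f)$, and in general $\|f\|_{U^2}^4=\sum_\alpha|\widehat{\omega^f}(\alpha)|^4$, which differs from $|\widehat{\omega^f}(0)|=\bias(f)$ (e.g.\ a nonconstant linear $f$ has $\bias(f)=0$ but $\|f\|_{U^2}=1$). Second, the $d=3$ instance of Lemma~\ref{lem:gowers norm of der} applied to $\|f\|_{U^3}=\delta$ yields a $\tfrac12\delta^{2^3}=\tfrac12\delta^8$ fraction of good directions, not $\tfrac12\delta^4$; the stated fraction $\tfrac12\delta^4$ is in fact the $d=2$ instance, which controls $\|\Delta_y(f)\|_{U^1}=\bias(\Delta_y(f))\geq\tfrac12\delta^2$ and then uses Lemma~\ref{lem:rank of biased quadratic}, not Lemma~\ref{lem:rank of quadratic with high U2}. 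So as written, item~2 is internally consistent with hypothesis $\|f\|_{U^2}=\delta$ and fraction $\delta^4/2$. Your instinct that the authors \emph{intend} to invoke this lemma under the hypothesis $\|f\|_{U^3}=\delta$ (that is how Corollary~\ref{cor:good subspace of low rank} is cited in the proof of Theorem~\ref{thm:intro:deg-3:gowers}, and $\|f\|_{U^3}=\delta$ does \emph{not} imply $\|f\|_{U^2}\geq\delta$, since the Gowers norms increase in $d$) is well founded, but if one replaces $U^2$ by $U^3$ in the hypothesis, the fraction of good directions should correspondingly be $\delta^8/2$ rather than $\delta^4/2$ — a change that only affects hidden constants downstream through Lemma~\ref{lem:subadditive} and hence is harmless, but should be stated.
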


We now combine Lemma~\ref{lem:subadditivity of derivatives} with
Lemma~\ref{lem:structur of non random cubic} and
Lemma~\ref{lem:subadditive} and obtain the following corollary.

\begin{corollary}\label{cor:good subspace of low rank}
Let $f$ be a cubic polynomial. If $\bias(f) = \delta$ or
$\|f\|_{U^2} = \delta$, then there exists a subspace $V \subseteq
\F^n$ such that $\dim(V) \geq n -  O(\log(\frac{1}{\delta}))$ and
such that for every $y \in V$ it holds that $\rank_2(\Delta_y(f))
= O(\log^2(\frac{1}{\delta}))$.
\end{corollary}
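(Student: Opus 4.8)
The plan is to combine the three lemmas cited in the statement essentially mechanically. First I would recall that by Lemma~\ref{lem:subadditivity of derivatives}, the function $\mathcal{F}(y) = \rank_2(\Delta_y(f))$ is subadditive on $\F^n$; this is the structural fact that makes the Bogolyubov--Chang machinery applicable. Next, I would invoke Lemma~\ref{lem:structur of non random cubic}: whether $\bias(f)=\delta$ or $\|f\|_{U^2}=\delta$, a fraction at least $\mu \eqdef \delta^4/2$ of directions $y$ satisfy $\rank_2(\Delta_y(f)) \leq r \eqdef \log_{|\F|}(2/\delta^2) = O(\log(1/\delta))$. (For the bias case the fraction is actually the larger $\delta^2/2$, but taking the smaller value $\delta^4/2$ covers both cases simultaneously, and only affects constants.) Thus the sublevel set $A_r = \{y : \mathcal{F}(y) \le r\}$ has density at least $\mu$ in $\F^n$.

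Now I would apply Lemma~\ref{lem:subadditive} to $\mathcal{F}$ with this $A_r$. It yields a subspace $V \subseteq \F^n$ of co-dimension $O(\log(1/\mu)) = O(\log(1/\delta))$ — so $\dim(V) \geq n - O(\log(1/\delta))$ — such that every $y \in V$ satisfies
$$
\mathcal{F}(y) \;\leq\; 2r\cdot\Bigl\lceil \tfrac{1}{2}\bigl(\log_{\frac{|\F|}{|\F|-1/2}}(2/\mu)+2\bigr)\Bigr\rceil + 2r \;=\; O\!\bigl(r\log(1/\mu)\bigr).
$$
Since $r = O(\log(1/\delta))$ and $\log(1/\mu) = O(\log(1/\delta))$, the right-hand side is $O(\log^2(1/\delta))$, which is exactly the claimed bound $\rank_2(\Delta_y(f)) = O(\log^2(1/\delta))$ for all $y \in V$. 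This completes the argument.

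There is really no serious obstacle here: the three ingredients are tailored to fit together, and the only thing to be careful about is bookkeeping the two regimes (bias versus $U^2$ norm) with a single choice of parameters, and tracking that both the co-dimension bound and the rank bound come out with the advertised asymptotics. If anything, the mild subtlety is that the bound from Lemma~\ref{lem:subadditive} is a product of two logarithmic quantities — the per-direction rank cutoff $r$ and the "walk length" $k = O(\log(1/\mu))$ coming from the $kA_r - kA_r$ construction — which is precisely why we land at $\log^2$ rather than $\log$; this is also the source of the quadratic exponent that propagates into Theorems~\ref{thm:intro:deg-3:gowers} and \ref{thm:intro:deg-4:gowers}.
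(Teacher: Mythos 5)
Your proof is correct and is exactly the argument the paper intends: the paper itself merely says ``combine Lemma~\ref{lem:subadditivity of derivatives} with Lemma~\ref{lem:structur of non random cubic} and Lemma~\ref{lem:subadditive}'' and leaves the bookkeeping implicit, while you spell it out accurately, including the observation that both bias and Gowers-norm cases can be handled uniformly by taking the worse density $\mu=\delta^4/2$. Your remark that the $\log^2$ arises as the product of the per-direction rank cutoff $r=O(\log(1/\delta))$ with the Bogolyubov--Chang walk length $k=O(\log(1/\mu))$ is precisely the right way to see where the quadratic exponent comes from.
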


\subsection{The structure of low rank spaces}

So far we have established the existence of a subspace $V
\subseteq\F^{n}$ such that for every $y \in V$ it holds that
$\rank_{2}(\Delta_{y}(f)) = O(\log^{2}(\frac{1}{\delta}))$. We now
show that such spaces of low rank polynomials have a very
restricted structure. Namely, there exist
$r=O(\log^{2}(\frac{1}{\delta}))$ linear functions
$\ell_{1},\ldots,\ell_{r}$ such that every $\Delta_{y}(f)$ can be
written as $\Delta_{y}(f) = \sum_{i=1}^{r} \ell_{i}
\cdot\ell_{i}^{(y)} + \ell_{0}^{(y)}$, where the
$\ell_{i}^{(y)}$-s are linear functions determined by $y$. The
intuition behind this result is that $\rank_{2}(q+q')$ can be much
smaller than $\rank_{2}(q)+\rank_{2}(q')$ only if there is some
basis with respect to which $q$ and $q'$ share many linear
functions when represented in the form of
Theorem~\ref{thm:Dickson}. From this observation we deduce that if
we consider some function of maximal rank,
$q=\sum_{i=1}^{r}\ell_{i} \cdot\ell_{i}'$, and set $\left\{
\ell_{i},\ell'_{i}\right\} _{i=1}^{r}$ to zero (namely, consider
the subspace on which they all vanish), then on this subspace the
rank of the remaining quadratic functions decreases by a factor of
two. Repeating this argument we get that after setting at most
$4r$ linear functions to zero, all our quadratic functions become
linear functions.

\begin{lemma}
\label{lem:low rank space have few common functions} Let $M$ be a
linear space of quadratic functions satisfying $\rank_2(p) \leq r$
for all $p \in M$. Then there exists a subspace $V \subseteq\F^{n}
$ of co-dimension $\leq 4r$ such that $p|_{V}$ is a linear
function for all $p \in M$.
\end{lemma}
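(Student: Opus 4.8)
I would prove the statement by induction on $r$, and it is convenient to take $r$ to be exactly $r_{0}:=\max_{p\in M}\rank_2(p)$, so that the claimed bound reads $4r_{0}$. If $r_{0}=0$ every $p\in M$ is already linear and $V=\F^{n}$ works; so assume $r_{0}\ge 1$ and fix some $q\in M$ with $\rank_2(q)=r_{0}$. By Theorem~\ref{thm:Dickson} there is an invertible linear change of coordinates after which $q$ equals $\sum_{i=1}^{r_{0}}\alpha_{i}x_{i}^{2}+\ell$ when $\chr(\F)$ is odd, and $\sum_{i=1}^{r_{0}}x_{2i-1}x_{2i}+\ell$ when $\chr(\F)=2$, for some linear form $\ell$. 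Let $L$ be the set of the at most $2r_{0}$ coordinates occurring in the quadratic part, and let $W_{0}:=\{x:\ x_{j}=0\ \text{for all}\ x_{j}\in L\}$, a subspace of co-dimension at most $2r_{0}$ on which $q|_{W_{0}}=\ell|_{W_{0}}$ is linear.

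The heart of the argument is the following \emph{rank-halving claim}: for every $p\in M$ one has $\rank_2(p|_{W_{0}})\le\lfloor r_{0}/2\rfloor$. Granting it, $M|_{W_{0}}:=\{p|_{W_{0}}:p\in M\}$ is a linear space of quadratic functions on $W_{0}\cong\F^{\,n-\mathrm{codim}(W_{0})}$, every element of which has rank at most $\lfloor r_{0}/2\rfloor<r_{0}$. By the induction hypothesis there is a subspace $V\subseteq W_{0}$ of co-dimension in $W_{0}$ at most $4\lfloor r_{0}/2\rfloor\le 2r_{0}$ on which every $p|_{W_{0}}$, hence every $p|_{V}$, is linear. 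Then the co-dimension of $V$ in $\F^{n}$ is at most $2r_{0}+2r_{0}=4r_{0}$, which is exactly what we want, and the induction bottoms out since $\lfloor r_{0}/2\rfloor<r_{0}$ for $r_{0}\ge 1$.

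So everything reduces to the rank-halving claim, and this is the step I expect to be the main obstacle. The key input is maximality of $q$: since $q+\lambda p\in M$ for every $\lambda\in\F$, Lemma~\ref{lem: rank is subadditive} together with maximality gives $\rank_2(q+\lambda p)\le r_{0}$ for all $\lambda$. I would pass to the polar bilinear form $B_{g}(x,y):=g(x+y)-g(x)-g(y)+g(0)$ of a quadratic $g$, recalling that $\rank_2(g)$ equals $\rank(B_{g})$ when $\chr(\F)$ is odd and $\tfrac12\rank(B_{g})$ when $\chr(\F)=2$, that neither quantity changes when a linear form is added, and that (in the coordinates above) $W_{0}$ realizes the radical of $B_{q}$. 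The claim then amounts to: if every form in the pencil $\{B_{q}+\lambda B_{p}:\lambda\in\F\}$ has rank at most $\rank(B_{q})$, then the restriction of $B_{p}$ to $\ker B_{q}$ has rank at most half the rank of $B_{p}$; equivalently, $q$ and $p$ can be brought simultaneously to canonical form so that at least half of the linear forms used by $p$ lie in $\mathrm{span}(L)$, and killing $L$ removes them from $p$.

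The intuition, as in the paragraph preceding the lemma, is that $\rank_2(q+\lambda p)$ can remain as small as $\rank_2(q)$ for all $\lambda$ only if $q$ and $p$ massively share linear forms: if $p$ contributed a rank-$t$ piece on coordinates independent of $L$, then $q+p$ would have rank roughly $r_{0}+t>r_{0}$. Making this quantitative through the pencil of polar forms — handled separately in the odd-characteristic case and in the alternating (characteristic $2$) case, since the canonical forms in Theorem~\ref{thm:Dickson} differ — is the real work; I would expect to use the classical structure theory of pencils of symmetric/alternating forms, or a direct coordinate argument peeling off one shared hyperbolic pair (resp. one shared square) at a time. Once the claim is established, the induction and the geometric-series bookkeeping in the second paragraph finish the proof.
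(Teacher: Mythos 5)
Your skeleton matches the paper's exactly: pick $g\in M$ of maximal rank $r$, pass to the subspace $W_0$ that kills the linear forms in $g$'s canonical quadratic part, argue that every $p\in M$ satisfies $\rank_2(p|_{W_0})\leq r/2$, and recurse, with the geometric sum $2r+2(r/2)+2(r/4)+\cdots\leq 4r$ controlling the co-dimension. The bookkeeping in your induction is correct (and yields $2r$ rather than $4r$ in odd characteristic, since only $r$ coordinates occur in $\sum\alpha_ix_i^2$).

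The problem is that the rank-halving claim, which you yourself flag as ``the main obstacle'' and ``the real work,'' is precisely the content of the paper's proof, and you leave it as a sketch rather than proving it. The paper proves it with a direct coordinate decomposition that uses only $\lambda=1$: writing $h|_{W_0}=\sum_{i=1}^{s}m_{2i-1}m_{2i}+m_0$, then lifting $h=\sum_{i=1}^{s}m_{2i-1}m_{2i}+m_0+\sum_{i=1}^{2r}\ell_iL_i$ and splitting each $L_i$ into its components in $\mathrm{span}\{m_j\}$, $\mathrm{span}\{\ell_j\}$, and a complementary piece $\tilde L_i$. After absorbing the $\mathrm{span}\{\ell_j\}$-parts into the $m$-side, the two pieces of $h$ live on disjoint sets of variables, so $\rank_2(h)$ splits additively as $s+\rank_2(\tilde h)$, giving $\rank_2(\tilde h)\leq r-s$; then $r\geq\rank_2(g+h)=s+\rank_2(g+\tilde h)\geq s+(r-(r-s))=2s$, forcing $s\leq r/2$. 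Your sketched route via the pencil $\{B_q+\lambda B_p\}$ and the classification of pairs of symmetric/alternating forms plausibly works, but ``peel off one shared hyperbolic pair at a time and show half of $p$'s forms lie in $\mathrm{span}(L)$'' is exactly the assertion that needs proof, not a proof of it. Also note a slight mismatch in what you propose to prove: you want $\rank_2(p|_{W_0})\leq\lfloor\rank_2(p)/2\rfloor$, whereas the paper directly gets the bound $\lfloor r/2\rfloor$ in terms of the maximal rank $r$; your version would need the extra step of invoking $\rank_2(p)\leq r$ to recover the paper's bound, and it is a genuinely stronger claim that would itself need justification. As it stands, the proposal correctly identifies the structure and the crux but does not actually establish the halving step, so it is incomplete.
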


We shall give the proof for the case $\F=\F_2$. The proof for odd
characteristics is very similar (except that in the odd
characteristic case we have that the co-dimension of $V$ is $2r$
whereas in the even characteristic case it is $4r$).

\begin{proof}\sloppy
Let $g\in M$ be such that $\rank_2(g)=r$. By
Theorem~\ref{thm:Dickson}, $g$ can be expressed as
$g=\sum_{i=1}^{r}\ell_{2i-1} \cdot \ell_{2i} + \ell_0$. Denote
$V\triangleq \left\{
x\mid\ell_{1}(x)=\ell_{2}(x)=...=\ell_{2r}(x)=0\right\}$. We now
show that for every $h\in M$ it holds that $\rank_2(h|_V)\leq
\frac{r}{2}$. Repeating this argument we get that after setting at
most $2r + 2(r/2) + 2(r/4) + \ldots \leq 4r$ linear functions to
zero, the rank of all the quadratic functions in $M$ became zero.

Pick some $h\in M$ and denote $\rank_2(h|_V)=s$. As before, $h|_V$
can be expressed as $h|_{V}=\sum_{i=1}^{s}m_{2i-1}\cdot m_{2i} +
m_0$ (where the $m_{i}$-s are linear functions). Clearly the
functions $\{\ell_1,\ldots,\ell_{2r},m_1,\ldots,m_{2s}\}$ are
linearly independent. We can therefore write
$$h = \sum_{i=1}^{s}m_{2i-1}\cdot m_{2i} + m_0 +
\sum_{i=1}^{2r}\ell_i \cdot L_i \;,$$ where the $L_i$-s are linear
functions. Write $L_i = \tilde{m}_i + \tilde{\ell}_i +
\tilde{L}_i$ where $\tilde{m}_i \in
\mathrm{span}\{m_0,\ldots,m_{2s}\}$, $\tilde{\ell}_i \in
\mathrm{span}\{\ell_0,\ldots,\ell_{2r}\}$ and $\tilde{L}_i$ is
linearly independent of the $m_j$-s and $\ell_j$-s. Rearranging
terms we get that
$$h = \sum_{i=1}^{s}(m_{2i-1} + \ell'_{2i-1})\cdot (m_{2i}+ \ell'_{2i}) + (m_0 +\ell'_0) +
\tilde{h}(\ell_0,\ldots,\ell_{2r},\tilde{L}_1,\ldots,\tilde{L}_{2r})\;,$$
where each $\ell'_i$ is in the span of the $\ell_i$-s and
$\tilde{h}$ is a quadratic polynomial. Denote $m'_i = m_i +
\ell'_i$. It is clear that
$\ell_0,\ldots,\ell_{2r},\tilde{L}_1,\ldots,\tilde{L}_{2r}$ are
linearly independent of the $m'_i$-s (and vice versa).
Consequently,\footnote{From Theorem~\ref{thm:Dickson} it is clear
that for quadratic polynomials $q_1,q_2$ it holds that
$\rank_2(q_1(\bar{x})+q_2(\bar{y})) =
\rank_2(q_1(\bar{x}))+\rank_2(q_2(\bar{y}))$. }
$\rank_2(\sum_{i=1}^{s}m'_{2i-1}\cdot m'_{2i} + m'_0) +
\rank_2(\tilde{h}) = \rank_2(h) \leq r$. Hence,
$\rank_2(\tilde{h}) \leq r-s$. We now get that
$$r \geq \rank_2(g+h) = \rank_2\left(\sum_{i=1}^{s}m'_{2i-1}\cdot m'_{2i} + m'_0
+\tilde{h}(\ell_0,\ldots,\ell_{2r},\tilde{L}_1,\ldots,\tilde{L}_{2r})
+g\right) =$$  $$\rank_2\left(\sum_{i=1}^{s}m'_{2i-1}\cdot m'_{2i}
+ m'_0\right) + \rank_2\left(g +
\tilde{h}(\ell_0,\ldots,\ell_{2r},\tilde{L}_1,\ldots,\tilde{L}_{2r})\right)
\geq s + (r - (r-s)) =2s \;,$$ where we used the fact that
$\rank_2(g+\tilde{h})\geq \rank_2(g)-\rank_2(\tilde{h})$. As we
showed that $r \geq 2s$ the proof is completed.
\end{proof}

\ignore{

Repeating this argument $r$ times yield the next
corollary.\footnote{In fact, the proof above and the corollary can
be made tighter but as it will not improve our end result we did
not make an attempt to improve the parameters.}

\begin{corollary}
\label{cor:Low rank few common functions}Let $M$ be a vector space
of quadratic functions satisfying $\rank_2(M)\leq r$. Then there
exists a set $A=\left\{ \ell_{i}\right\} _{i=1}^{r^{2}}$ of
linearly independent linear functions such that
$\rank_2([M]_A)=0$. In other words, every $q \in M$ can be written
as $q=\sum_{i=1}^{r^{2}}\ell_{i}\ell_{i}'+\ell_{0}'$ for some
linear functions $\left\{ \ell_{i}'\right\} _{i=0}^{r^{2}}$.
\end{corollary}

}

\subsection{Completing the proofs}

We are no ready to complete the proofs of
Theorems~\ref{thm:intro:deg-3:bias} and
\ref{thm:intro:deg-3:gowers}.

\begin{proof}[Proof of Theorem~\ref{thm:intro:deg-3:gowers}]
By Corollary~\ref{cor:good subspace of low rank} we get that if
$\|f\|_{U^2} = \delta$, then there exists a subspace $V \subseteq
\F^n$ such that $\dim(V) \geq n - O(\log(1/\delta))$ and such that
for every $y \in V$ it holds that $\rank_2(\Delta_y(f)) =
O(\log^2(\frac{1}{\delta}))$. Lemma~\ref{lem:low rank space have
few common functions} implies that there are at most
$r=O(\log^2(\frac{1}{\delta}))$ linear functions
$\ell_1,\ldots,\ell_r$ such that for every $y \in V$ we have that
$\Delta_y(f) = \sum_{i=1}^{r}\ell_i \cdot \ell^{(y)}_i +
\ell^{(y)}_0$. Let $U = \{x\in V \mid\ell_1(x) = \ldots \ell_r(x)
= 0 \}$. Then $U$ is a linear space of dimension $\dim(U) \geq
n-O(\log^2(\frac{1}{\delta}))$. For every $y \in U$ we have that
$\Delta_y(f)|_U = \ell^{(y)}_0|_U$. Hence, for every $y \in U$,
$\deg(\Delta_y(f)) \leq 1$. Therefore, $\deg(f|_U) \leq 2$. Let
$\ell'_1,\ldots,\ell'_t$ be linearly independent linear functions
such that $x \in U$ iff $\ell'_1(x)=\ldots=\ell'_t(x)=0$. It
follows that we can write $f = \sum_{i=1}^{t}\ell'_i \cdot q_i +
q_0$ for some quadratic polynomials $\{q_i\}$. As $t = n - \dim(U)
 = O(\log^2(\frac{1}{\delta}))$ the result follows.
\end{proof}

The proof of Theorem~\ref{thm:intro:deg-3:bias} is essentially the
same except that we make another small optimization that reduces
the required number of quadratic functions.

\begin{proof}[Proof of Theorem~\ref{thm:intro:deg-3:bias}]
By the same argument as above we get that $f =
\sum_{i=1}^{t}\ell_i \cdot q_i + q_0$ for some quadratic
polynomials $\{q_i\}$ and linear functions $\{\ell_i\}$, where $t
= O(\log^2(\frac{1}{\delta}))$. For convenience we shall assume
w.l.o.g. that
\begin{equation}\label{eq: f before reducing t} f = \sum_{i=1}^{t}x_i \cdot q_i + q_0
\;.\end{equation} The following lemma shows that by adding a few
more linear functions we can assume that no nontrivial linear
combination of the $q_i$-s has a low rank.

\begin{lemma}\label{lem:regularizing a set of quadratics}
Let $q_1,\ldots,q_t$ be quadratic polynomials over $\F^n$. Then,
for every $r$ there exist a subspace $V \subset \F^n$ of dimension
$\dim(V) \geq n - t(r+1)$, and $t' \leq t$ indices
$i_1,\ldots,i_{t'}$ such that for every affine shift $V'$ of $V$
the following holds
\begin{enumerate}
\item For all $i$, $q_i|_{V'} \in
\mathrm{span}\{1,q_{i_1}|_{V'},\ldots,q_{i_{t'}}|_{V'}\}$.

\item For any non trivial linear combination we have that
$\rank_2\left(\sum_{j=1}^{t'}\alpha_i q_{i_j}|_{V'}\right) >r$.
\end{enumerate}
\end{lemma}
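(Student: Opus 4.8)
The plan is to prove this by an iterative "regularization" process on the list $q_1,\ldots,q_t$, in the spirit of standard regularity-lemma arguments. First I would describe one step: suppose some nontrivial linear combination $q \eqdef \sum_{i} \alpha_i q_i$ has $\rank_2(q) \leq r$; then by Theorem~\ref{thm:Dickson} (and Definition~\ref{def:rank of quadratic}) there are at most $r+1$ linear functions $\ell_1,\ldots,\ell_{r+1}$ such that $q \in \mathrm{span}\{1, \ell_1\cdot\ell_2, \ell_3\cdot\ell_4,\ldots\}$, so in particular $q$ is a function of $\ell_1,\ldots,\ell_{r+1}$. Passing to any affine subspace $V'$ on which $\ell_1,\ldots,\ell_{r+1}$ are all constant makes $q|_{V'}$ a constant (an element of $\mathrm{span}\{1\}$). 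The key point is that fixing $r+1$ linear forms to constants is an affine-subspace restriction of codimension at most $r+1$, and — crucially — whether such a restriction makes $q|_{V'}$ constant does not depend on which affine shift $V'$ we take, only on the underlying linear subspace; this is what lets us state the conclusion for "every affine shift $V'$ of $V$".

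The iteration then goes as follows. Maintain a sublist $S \subseteq \{q_1,\ldots,q_t\}$ (initially all of them) and a subspace $V$ (initially $\F^n$, codimension $0$). At each step, if there is a nontrivial combination of the current $S$ whose restriction to the affine shifts of $V$ has rank $\leq r$, then one member of $S$ lies in the span of $\{1\}$ together with the others when restricted to $V'$; remove that member from $S$, restrict $V$ further by the $\leq r+1$ linear forms witnessing the low rank of that combination (reducing $\dim V$ by at most $r+1$), and repeat. Since $|S|$ drops by one each step, this halts after at most $t$ steps, having removed $t - t'$ of the $q_i$ and reduced the codimension by at most $(t-t')(r+1) \leq t(r+1)$. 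Relabel the surviving indices $i_1,\ldots,i_{t'}$. When the process halts, no nontrivial combination of $q_{i_1}|_{V'},\ldots,q_{i_{t'}}|_{V'}$ has rank $\leq r$ (that is item~2), and by construction every removed $q_i|_{V'}$ lies in $\mathrm{span}\{1, q_{i_1}|_{V'},\ldots,q_{i_{t'}}|_{V'}\}$ — for the surviving ones this is trivial — which is item~1. One has to be mildly careful that "the combination that got removed is in the span of the rest" is preserved under the later restrictions; this holds because restriction is a linear map on the space of quadratics, so membership in a span is preserved.

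The main obstacle, and the step I would treat most carefully, is bookkeeping the affine-shift uniformity: I need the rank of $\sum \alpha_j q_{i_j}|_{V'}$ and the span memberships in item~1 to hold simultaneously for \emph{all} affine shifts $V'$ of the final $V$, not just one. The clean way to handle this is to observe that for a fixed linear subspace $V$, restricting a quadratic $q$ to an affine shift $V' = v_0 + V$ changes $q|_{V}$ only by adding a linear-plus-constant term (depending on $v_0$), and by the "moreover" clause of Theorem~\ref{thm:Dickson} together with Lemma~\ref{lem: rank is subadditive}-type reasoning, $\rank_2$ is unaffected by adding linear and constant terms; likewise the span $\mathrm{span}\{1, q_{i_1},\ldots,q_{i_{t'}}\}$ already absorbs constants and the linear corrections can be folded in after enlarging $V$'s defining set by the relevant (few) linear forms if necessary. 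So I would fix the subspace first via the iteration above using the linear (homogeneous) subspace throughout, and only at the end note that the conclusions are shift-invariant by this linear/constant-term argument.
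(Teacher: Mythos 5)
Your iterative elimination is the same argument the paper gives by induction on $t$: while some nontrivial combination of the surviving $q_i$'s restricted to the current subspace has rank $\le r$, use its low-rank witness to cut down $V$ by at most $r+1$ linear forms, drop one polynomial from the list, and repeat. One detail in your step description is wrong, however, and is worth flagging because it affects how the $r+1$ is achieved. You write that if $\rank_2(q)\le r$ then $q$ is a function of $r+1$ linear forms. This is true in odd characteristic, where Theorem~\ref{thm:Dickson} gives $q=\sum_{i=1}^r\alpha_i\ell_i^2+\ell_0$, but over $\F_2$ the canonical form is $q=\sum_{i=1}^r\ell_{2i-1}\ell_{2i}+\ell_0$, which involves $2r+1$ linear forms. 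Fixing all of them would cost codimension $2r+1$ per step and overshoot the bound $t(r+1)$. The paper instead fixes only $\ell_0$ and \emph{one member of each product pair} ($\ell_2,\ell_4,\dots,\ell_{2r}$), which is exactly $r+1$ forms and already kills the quadratic rank. That is the move you should use in characteristic $2$. A consequence of this, which you should also be upfront about, is that on a general affine shift $V'$ the removed polynomial $q|_{V'}$ is not a constant but an affine-linear function (a $\pm$ combination of the unfixed partners $\ell_1,\ell_3,\dots$). So item~1 of the lemma cannot hold literally as ``$\in\mathrm{span}\{1,\dots\}$'' on every shift; the paper has the same looseness, and the intended reading (consistent with all the places the lemma is invoked, where only the rank of combinations matters) is span modulo degree-$\le 1$ terms. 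Your closing remark that $\rank_2$ is unaffected by adding linear and constant terms is precisely the observation that makes this looseness harmless, so after replacing ``constant'' by ``degree $\le 1$'' and correcting the $\F_2$ count as above, your plan is a faithful restatement of the paper's proof.
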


\begin{proof}
The proof is by induction on $t$. For $t=1$ the claim is clear: If
$\rank_2(q_1) >r$ then we are done. Otherwise we have $q_1 =
\sum_{i=1}^{r}\ell_{2i-1}\ell_{2i} + \ell_0$. Letting $V = \{x\mid
\ell_0(x)=\ell_2(x)=\ldots=\ell_{2r}(x)=0\}$ the claim follows
(indeed notice that passing to an affine shift of $V$ simply means
fixing the $\ell_i$-s to arbitrary values). Assume now that we
have $q_1,\ldots,q_t$ and that (w.l.o.g.) $\rank_2\left(q_t +
\sum_{i=1}^{t-1}\alpha_i q_i\right) \leq r$. Write $q_t +
\sum_{i=1}^{t-1}\alpha_i q_i = \sum_{i=1}^{r}\ell_{2i-1}\ell_{2i}
+ \ell_0$. Set $V = \{x\mid
\ell_0(x)=\ell_2(x)=\ldots=\ell_{2r}(x)=0\}$. Then, $q_t|_V \in
\mathrm{span}\{q_1|_V.\ldots,q_{t-1}|_V\}$. As $\dim(V) = n-(r+1)$
the claim follows by applying the induction argument to
$q_1|_V.\ldots,q_{t-1}|_V$ (again the claim about any affine shift
follows easily).
\end{proof}

We continue with the proof of the theorem. Having
Equation~\eqref{eq: f before reducing t} in mind we set $U =
\{(0,\ldots,0,x_{t+1},\ldots,x_n)\} \subset \F^n$. Applying
Lemma~\ref{lem:regularizing a set of quadratics} on
$q_1|_U,\ldots,q_t|_U$ with $r=\log_{|\F|}(2/\delta)$ we get that
there is a subspace $W \subset U$ and $t' \leq t$ such that:
$\dim(W) \geq \dim(U) - (r+1)t \geq n - (r+2)t =
n-O(\log^3(\frac{1}{\delta}))$; w.l.o.g. for every $i=1\dots t$,
$q_i|_W \in \mathrm{span}\{q_1|_W,\ldots,q_{t'}|_W\}$; any
nontrivial linear combination of $q_1|_W,\ldots,q_{t'}|_W$ has
rank larger than $r$. By applying an invertible linear
transformation\footnote{This step is not really required but we
continue using it just to make the proofs easier to read.} we can
further assume that $W = \{x \in \F^m \mid x_1=\ldots=x_m=0\}$ for
some $m \leq (r+2)t$. For $i=1\ldots t'$ let $q'_i = q_i|_W$. Note
that $q'_i$ does not contain any of the variables
$x_1,\ldots,x_m$. We can rewrite Equation~\eqref{eq: f before
reducing t} as\footnote{We will later explain why $q_0$
`disappeared' from this expression.}
\begin{equation}\label{eq:
f after reducing t} f = \sum_{i=1}^{t'}\ell'_i q'_i +
\sum_{i=1}^{t}\sum_{j=1}^{m}x_i x_j \ell_{i,j}\;,\end{equation}
where the $\ell'_i$-s are linearly independent linear functions in
$x_1,\ldots,x_{t}$. We now show that $t' < \log_{|\F|}(2/\delta)$.
Assume for contradiction that $t' \geq \log_{|\F|}(2/\delta)$.  As
$$\bias(f) =
\E_{\alpha_1,\ldots,\alpha_{t'}}\left[\bias(f(x_{1},\ldots,
x_n)|_{(\ell'_1,\ldots,\ell'_{t'})=(\alpha_1,\ldots,\alpha_{t'})}\right]\;,$$
\sloppy there exists an assignment
$(x_1,\ldots,x_m)=(\beta_1,\ldots,\beta_m)$ satisfying
$(\ell'_1,\ldots,\ell_{t'}) = (\alpha_1,\ldots,\alpha_{t'})\neq 0$
such that
$$\bias\left(\sum_{i=1}^{t'}\alpha_i q'_i  +
\sum_{i=1}^{t}\beta_i\sum_{j=1}^{m} \beta_j \ell_{i,j} \right)
\geq \delta - \frac{1}{|\F|^{t'}} \geq \delta/2\;.$$ Therefore,
for some constants $\alpha_1,\ldots,\alpha_{t'}$ (where not all
$\alpha_1,\ldots,\alpha_{t'}$ are zero) we have that
$$\bias\left( \sum_{i=1}^{t'}\alpha_i q'_i + \ell\right)\geq \delta/2 \;,$$ for some
linear function $\ell$. By Lemma~\ref{lem:rank of biased
quadratic} we get that
$$\rank_2\left(\sum_{i=1}^{t'}\alpha_i q'_i\right) =
\rank_2\left(\sum_{i=1}^{t'}\alpha_i q'_i + \ell\right) \leq
\log_{|\F|}(1/(\delta/2))  = r \;,$$ in contradiction to the
choice of $q'_1,\ldots,q'_{t'}$.

To complete the proof we explain the reason for dropping $q_0$.
Indeed, consider Equation~\eqref{eq: f before reducing t}. Let $U
= \{x\mid x_1=\ldots=x_t=0\}$. Set $\tilde{q}_i = q_i|_U$. Then we
can rewrite \eqref{eq: f before reducing t} as
$\sum_{i=1}^{t}x_i\tilde{q}_i + \tilde{q}_0 + \sum_{i=1}^{t}x_i
\sum_{j=1}^{t} x_{j} \ell_{i,j}$, for some linear functions
$\ell_{i,j}$. Now, for some $\alpha_1,\ldots,\alpha_t$ we get that
$\bias(\sum_{i=1}^{t}\alpha_i\tilde{q}_i + \tilde{q}_0 +
\sum_{i=1}^{t}\alpha_i \sum_{j=1}^{t} \alpha_j \ell_{i,j}) \geq
\delta.$ Lemma~\ref{lem:rank of biased quadratic} implies that
$\rank_2(\sum_{i=1}^{t}\alpha_i\tilde{q}_i + \tilde{q}_0) \leq
\log_{|\F|}(1/\delta)$ and so we can replace $\tilde{q}_0$ by a
linear combination of the other $\tilde{q}_i$-s and a function
depending on a few linear functions. By passing to a (possibly
affine) subspace of dimension at least $n -
\log_{|\F|}(1/\delta)-1$ we get a representation for $f$ without
$q_0$. This operation increases $t'$ in Equation~\eqref{eq: f
after reducing t} by no more than $\log_{|\F|}(1/\delta)+1$ and so
we are done.
\end{proof}

\section{The structure of biased 4 degree polynomials}\label{sec:bias 4}

In this section we prove Theorem~\ref{thm:intro:deg-4:bias} on the
structure of biased degree 4 polynomials. As in the case of cubic
polynomials, we shall focus our attention on a subspace on which
all of derivatives have a small rank (a cubic polynomial is of low
rank if it depends on a small number of linear and quadratic
functions). By a lemma of Bogdanov and Viola
\cite{BogdanovViola07} (Lemma \ref{lem:Bogdanov and Viola}) we get
that $f$ can be well approximated by a function of a small number
of its derivatives (which in our case, are all of low rank). Thus,
$f$ is well approximated by a function of a few linear and
quadratic polynomials. By passing to a subspace we can assume that
$f$ is well approximated by a function of a small number of
quadratic polynomials. Lemma~\ref{lem:regularizing a set of
quadratics} implies that  (possibly on a slightly smaller
subspace) $f$ can be well approximated by a function of a small
number of quadratics, that every nontrivial linear combination of
them has a high rank. We then show that in this case those
quadratic functions are in fact {\em strongly regular} (a notion
that we later explain) and therefore by a theorem of Kaufman and
Lovett \cite{KaufmanLovett08}, $f$ in fact equals a function in
those quadratic (on the subspace). We then finish the proof by
showing that in this case $f$ also have a nice structure.

\ignore{
 and deduce, using lemma \ref{lem:rank implie
regular}, that those family of function is strong regular. Using
lemma \ref{lem:Kaufman Lovett} of Kaufman and Lovett (\cite{KL08})
we deduce that our degree 4 polynomial can compute precisely as a
function $F$ of those quadratic functions. We'll conclude by argue
that $F$ must have a degree 2, otherwise our original polynomial's
degree will not be two. In conclusion, after reconsider the linear
functions set to zero, every biased degree 4 polynomial have the
form $\sum_{i=1}^{q}l_{i}T_{i}+\sum_{i=1}^{r}Q_{i}Q_{i}'+Q_{0}$. }

\subsection{Restricting the polynomial to a `good'
subspace}\label{sec:bias 4: restricting}

In this section we prove an analogous result to
Corollary~\ref{cor:good subspace of low rank}. We first define the
rank of a cubic polynomial.

\begin{definition}\label{def:cubic rank}
Let $g$ be a degree three polynomial. We define $\rank_3(g)$ to be
the minimal integer $r$ for which there are $r$ linear functions
$\ell_1,\ldots,\ell_r$ and $r+1$ quadratic functions
$q_0,\ldots,q_r$ such that $g=\sum_{i=1}^{r}\ell_{i}q_{i}+q_{0}$.
\end{definition}

\begin{lemma}\label{lem:good subspace of low rank for quartics}
Let $f$ be a degree four polynomial satisfying $\bias(f)=\delta$.
Then there exist a linear subspace $V \subseteq \F^n$ of dimension
$\dim(V) \geq n -O(\log_{|\F|}(1/\delta))$, such that for every $y
\in V$ $\rank_3(\Delta_y(f)) = \log^{O(1)}(1/\delta)$.
\end{lemma}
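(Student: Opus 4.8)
The plan is to mimic the proof of Corollary~\ref{cor:good subspace of low rank} one degree up. First I would show that $\rank_3$, restricted to derivatives of a fixed quartic $f$, is a subadditive function on $\F^n$: exactly as in Lemma~\ref{lem:subadditivity of derivatives}, the identity $\Delta_y(f)+\Delta_z(f)=\Delta_{y+z}(f)-\Delta_y\Delta_z(f)$ holds, and now $\Delta_y\Delta_z(f)$ is a \emph{quadratic} polynomial (since $\deg f=4$), so adding it to a cubic does not change $\rank_3$ — it just gets absorbed into the ``$q_0$'' term of Definition~\ref{def:cubic rank}. Subadditivity under scaling $\alpha$ is identical. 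Hence $\mathcal{F}(y)\eqdef\rank_3(\Delta_y(f))$ is subadditive, and Lemma~\ref{lem:subadditive} applies once we know a noticeable fraction of directions have small $\rank_3$.

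Next I would establish that for $\bias(f)=\delta$, a constant fraction of the derivatives have small $\rank_3$. By Lemma~\ref{lem:biased der}, at least a $\tfrac12\delta^2$ fraction of $y$ satisfy $\bias(\Delta_y(f))\geq\tfrac12\delta^2$, and each such $\Delta_y(f)$ is a cubic polynomial of bias at least $\tfrac12\delta^2$. Now I invoke Theorem~\ref{thm:intro:deg-3:bias} on $\Delta_y(f)$: it equals $\sum_{j=1}^{c_1}\ell_j q_j + g(\ell'_1,\ldots,\ell'_{c_2})$ with $c_1=O(\log(1/\delta^2))=O(\log(1/\delta))$ and $c_2=O(\log^4(1/\delta))$; the term $g(\ell'_1,\ldots,\ell'_{c_2})$ is a cubic polynomial in $c_2$ linear forms, which can itself be written as $\sum_{k}\ell'_k\cdot\tilde q_k$ for quadratics $\tilde q_k$ (group its monomials by one distinguished linear form each). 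Thus $\rank_3(\Delta_y(f))\leq c_1+c_2=O(\log^4(1/\delta))=\log^{O(1)}(1/\delta)$ for all such $y$. This is the quartic analogue of Lemma~\ref{lem:structur of non random cubic}, and it is the step where the cubic structure theorem does all the work.

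Finally I combine: let $\mu=\tfrac12\delta^2$ and $r=\log^{O(1)}(1/\delta)$ be the bound just obtained, so $A_r=\{y:\mathcal{F}(y)\le r\}$ has density at least $\mu$. Applying Lemma~\ref{lem:subadditive} to the subadditive $\mathcal{F}$ yields a subspace $V$ of co-dimension $O(\log(1/\mu))=O(\log(1/\delta))$ on which $\mathcal{F}(y)\le 2r\cdot\lceil\tfrac12(\log_{|\F|/(|\F|-1/2)}(2/\mu)+2)\rceil+2r=O(r\log(1/\mu))=\log^{O(1)}(1/\delta)$, which is precisely the claim. The main obstacle I anticipate is purely bookkeeping: verifying cleanly that a cubic in $c_2$ linear forms has $\rank_3$ at most $c_2$ (so that the ``$g(\ell'_1,\ldots,\ell'_{c_2})$'' contribution is controlled), and tracking the polylog exponents through the three lemmas so that the final bound is honestly $\log^{O(1)}(1/\delta)$ rather than something larger; the subadditivity and density steps are routine copies of the degree-three argument.
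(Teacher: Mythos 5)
Your proposal is correct and follows essentially the same route as the paper's proof: observe that $\mathcal{F}(y)=\rank_3(\Delta_y(f))$ is subadditive (via the identity $\Delta_y(f)+\Delta_z(f)=\Delta_{y+z}(f)-\Delta_y\Delta_z(f)$, where the last term is now quadratic and is absorbed into the $q_0$ slot of Definition~\ref{def:cubic rank}), use Lemma~\ref{lem:biased der} and Theorem~\ref{thm:intro:deg-3:bias} to bound $\rank_3$ on a $\tfrac12\delta^2$-fraction of directions by $O(\log^4(1/\delta))$, then invoke Lemma~\ref{lem:subadditive}. You spell out a step the paper leaves implicit — that $g(\ell'_1,\ldots,\ell'_{c_2})$ has $\rank_3\le c_2$ by grouping each cubic monomial under one distinguished linear factor — but the argument and the final bounds ($\mathrm{codim}\;V=O(\log(1/\delta))$, $\rank_3=O(\log^5(1/\delta))$) agree with the paper's.
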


\begin{proof}
As before, define ${\cal F}(y) \eqdef \rank_3(\Delta_y(f))$. It is
again not difficult to see that $\cal F$ is a subadditive
function. By Lemma~\ref{lem:biased der} we get that there is a
subset $S \subseteq \F^n$ of size $\frac{\delta^2}{2}\cdot \F^n$
such that for all $y \in S$, $\bias(\Delta_y(f)) \geq
\frac{\delta^2}{2}$. Theorem~\ref{thm:intro:deg-3:bias} implies
that for every $y \in S$ it holds that $\rank_3(\Delta_y(f)) =
O(\log^4(\frac{1}{\delta}))$. From Lemma~\ref{lem:subadditive} it
follows that there is a linear subspace $V \subseteq \F^n$ with
$\dim(V) \geq n - O(\log_{|\F|}(1/\delta))$, such that for every
$y \in V$ $\rank_3(\Delta_y(f)) = O(\log^5(\frac{1}{\delta}))$.
\end{proof}
\sloppy By applying an invertible linear transformation we can
assume that $V = \{x :x_1=\ldots=x_{m}=0\}$ for some $m=
O(\log_{|\F|}(1/\delta))$. We now have \begin{equation}\label{eq:
quartic on subspace} f = \sum_{i=1}^{m}x_i g_i + f' \;,
\end{equation}
where $f'=f'(x_{m+1},\ldots,x_n)$. Moreover, by
Lemma~\ref{lem:good subspace of low rank for quartics} it follows
that for every $y = (0,\ldots,0,y_{m+1},\ldots,y_n)$,
$\rank_3(\Delta_y(f)) = O(\log^5(\frac{1}{\delta}))$. Notice that
for every such $y$ it holds that $$\Delta_y(f) = \sum_{i=1}^m x_i
\Delta_y(g_i) + \Delta_y(f')\;.$$ Hence, $\rank_3(\Delta_y(f'))
\leq \rank_3(\Delta_y(f)) + m$. We now fix some value to
$x_1,\ldots,x_m$, such that
$\bias(f(\alpha_1,\ldots,\alpha_m,x_{m+1},\ldots,x_n)) \geq
\delta$. Let
\begin{equation}\label{eq:quartic setting to constants}\tilde{f}(x_{m+1},\ldots,x_n) \eqdef
f(\alpha_1,\ldots,\alpha_m,x_{m+1},\ldots,x_n)\;.\end{equation} It
follows that $\bias(\tilde{f}) \geq \delta$ and that for every
$y=(y_{m+1},\ldots,y_n)$, $\rank_3(\Delta_y(\tilde f))
=\rank_3(\Delta_y(f'))= O(\log^5(\frac{1}{\delta}))$ (note that
$\deg\left(\Delta_y(\tilde{f})-\Delta_y(f')\right)=2$ so they have
the same rank). From now on we will only consider $\tilde{f}$ and
not $f$. Observe that if we prove
Theorem~\ref{thm:intro:deg-4:bias} for $\tilde{f}$ then by
considering Equations~\eqref{eq: quartic on subspace} and
\eqref{eq:quartic setting to constants} we get the required result
for $f$ itself.

\ignore{ Therefore, we abuse notations and assume, from now on,
w.l.o.g., that $\tilde{f}=f$ and $m=0$. }

\subsection{Computing $\tilde f$ using a few quadratics}

We now show that there is a large subspace on which $f$ can be
approximated by a function of a few quadratic polynomials. The
following lemma of Bogdanov and Viola shows that if $f$ is biased
then it can be well approximated by a small set of partial
derivatives.

\begin{lemma}
\label{lem:Bogdanov and Viola}(Lemma 24 from
\cite{BogdanovViola07}) Let $f:\mathbb{F}^{n}\to\mathbb{F}$ be a
function over a finite field $\F$ with $\bias(f)=\delta$. Then
there are $t$ directions $a_{1},...,a_{t}$ and a function $H$ such
that $H(\Delta_{a_1}(f),\ldots,\Delta_{a_t}(f))$
$\epsilon$-approximates $f$, where $t\leq
(1+\log\frac{1}{\epsilon})\left(|\mathbb{F}|/\delta\right)^{O(1)}$.
\end{lemma}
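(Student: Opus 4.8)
The statement to prove is Lemma~\ref{lem:Bogdanov and Viola}, a result quoted from \cite{BogdanovViola07}; the plan is to reconstruct its proof in the spirit of the $U^d$--Gowers inverse machinery, using only tools available in the excerpt (the Fourier setup, Lemma~\ref{lem:XOR}, and elementary manipulations). The overall idea is that if $\bias(f)=\delta$ then $f$ correlates with its own derivatives in a way that lets a short list of derivatives determine $f$ on almost all of $\F^n$. First I would observe that $\bias(f)^4 \le \|f\|_{U^2}^4 = \E_{a}[\bias(\Delta_a f)^2]$, so a $\poly(\delta/|\F|)$ fraction of directions $a$ have $\bias(\Delta_a f)$ large; more usefully, I would set up the ``self-correction'' identity $f(x) = f(x+a) - \Delta_a(f)(x)$ and iterate: for a tuple $(a_1,\dots,a_t)$ and a sign pattern one can write $f(x)$ in terms of $f(x + \sum_{i\in S} a_i)$ for various $S$ and the derivatives $\Delta_{a_i}(f)$ and higher mixed derivatives. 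The key point is that for a random tuple $a_1,\dots,a_t$, the function $x \mapsto (\Delta_{a_1}(f)(x), \dots, \Delta_{a_t}(f)(x))$, together with the ``anchor'' values, concentrates enough that $f(x)$ is a fixed function of these coordinates for a $1-\epsilon$ fraction of $x$.

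The concrete route I would take: define, for a direction $a$, the value $v_a = \Delta_a(f)(x)$, and consider the ``majority-vote'' decoding $H(z_1,\dots,z_t) \eqdef$ the plurality over choices of $x_0$ (or over a structured averaging) of the implied value of $f(x)$. I would then bound the error probability by a second-moment / Fourier argument: the event that the vote is wrong is controlled by $\bias$ of linear combinations $\sum_i \alpha_i \Delta_{a_i}(f)$, and by the hypothesis $\bias(f)=\delta$ most such combinations are themselves biased away from the ``bad'' behavior. Here Lemma~\ref{lem:XOR} is the natural instrument: it converts control on $\bias\big(\sum_i \alpha_i \Delta_{a_i}(f)\big)$ into near-uniformity of the joint distribution of the derivatives, which is exactly what is needed to show that the decoding map is well-defined on a $1-\epsilon$ fraction of inputs. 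The number of directions needed is governed by how small a bias we must drive the spurious linear combinations to: to get error $\epsilon$ over a space of dimension up to $t$ of linear combinations, Lemma~\ref{lem:XOR} asks for bias $\le \gamma |\F|^{-3t/2}$, and unwinding this against the gain $\delta$ per step yields $t \le (1+\log\frac1\epsilon)(|\F|/\delta)^{O(1)}$.

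The main obstacle I anticipate is making the self-correction identity genuinely finite and uniform: naively iterating $f(x)=f(x+a)-\Delta_a f(x)$ does not terminate, so one must instead use that after $t$ random steps the point $x+\sum_i a_i$ is (almost) uniform and {\em independent} of the derivative data, so that averaging over the anchor removes the dependence on $f$ at the anchor point entirely — this is where the $\delta$-bias hypothesis is actually consumed, since a biased $f$ has a dominant Fourier mode that survives the averaging. Quantitatively, one shows $\E_{a_1,\dots,a_t}\big[\text{(error of the induced }H\text{)}\big] \le \epsilon$ provided $t$ is as claimed, and then fixes a good tuple $(a_1,\dots,a_t)$. I would present the argument in this order: (i) the derivative/self-correction identities; (ii) definition of $H$ as a Fourier-weighted or majority decoder; (iii) the second-moment bound on the decoding error via Lemma~\ref{lem:XOR}; (iv) choice of $t$ and extraction of a good tuple by averaging. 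Since the statement is cited verbatim from \cite{BogdanovViola07}, I would also note that one may simply invoke their Lemma~24, and treat the above as the sketch of why it holds.
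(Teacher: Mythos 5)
The paper does not prove this lemma; it is imported verbatim as Lemma~24 of \cite{BogdanovViola07}, and the paper's ``proof'' is simply the citation. There is thus no internal argument to compare against; what follows evaluates your reconstruction on its own terms.

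Your seed idea is correct: use the self-correction identity $f(x)=f(x+a)-\Delta_a(f)(x)$ and define $H$ as a decoder aggregating several derivative values. But the machinery you propose around it is the wrong one and would not close. Lemma~\ref{lem:XOR} certifies \emph{closeness to uniformity} of a joint law under the hypothesis that all nontrivial linear combinations have small bias; here you need the opposite phenomenon (the law of $f$ is far from uniform, i.e., it has a dominant mode), and the decoding error is not controlled by biases of combinations $\sum_i\alpha_i\Delta_{a_i}(f)$ at all. Iterating the identity along $x+\sum_{i\le k}a_i$ is also not the mechanism: each $a_i$ is used once and independently, not cumulatively.

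The actual Bogdanov--Viola argument is elementary and uses no Fourier second-moment bound on the derivatives. Fix $x$ and sample $a_1,\ldots,a_t$ i.i.d.\ uniformly; then $f(x+a_1),\ldots,f(x+a_t)$ are i.i.d.\ samples from the law $\mu$ of $f$ on a uniform input, and knowing $\Delta_{a_i}(f)(x)=f(x+a_i)-f(x)$ gives this multiset up to a global shift by $f(x)$. Since $\bias(f)=\delta$ one has $|\hat\mu(1)|=|\E[\omega^{f}]|=\delta$, and distinct shifts of $\mu$ rotate $\hat\mu(1)$ by distinct $|\F|$-th roots of unity, hence are distinguishable with margin $\Omega(\delta/|\F|)$. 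A Chernoff bound then shows that with $t=O\bigl((|\F|/\delta)^2\log(1/\epsilon)\bigr)$ samples the empirical statistic $\frac1t\sum_i\omega^{z_i+c}$ identifies the correct shift $c=f(x)$ with probability $\ge 1-\epsilon$ over the choice of the $a_i$; defining $H$ as this decoder, averaging the error over $x$, and fixing a good tuple by Markov gives the lemma with the claimed $t\le(1+\log\frac1\epsilon)(|\F|/\delta)^{O(1)}$. In short: the one Fourier fact used is that $\mu$ has a nonvanishing first coefficient; the rest is concentration, not Lemma~\ref{lem:XOR}.
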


By the construction of $\tilde f$ we know that each of its partial
derivatives is of rank $\log^{O(1)}(1/\delta)$ and that
$\bias(\tilde{f})\geq \delta$. Thus, Lemma~\ref{lem:Bogdanov and
Viola} guarantees that $\tilde f$ can be well approximated using a
few quadratics.

\begin{corollary}\label{cor:tilde f approx by quadratics}
For every $\epsilon >0$ there are
$c=(1+\log\frac{1}{\epsilon})\left(|\mathbb{F}|/\delta\right)^{O(1)}$
quadratic polynomials $Q_1,\ldots,Q_c$ and a function $H$ such
that $\tilde f$ is $\epsilon$-approximated by $H(Q_1,\ldots,Q_c)$.
\end{corollary}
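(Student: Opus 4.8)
The plan is to combine Lemma~\ref{lem:Bogdanov and Viola} with the low-rank property of the derivatives of $\tilde f$ that was established in Section~\ref{sec:bias 4: restricting}; everything else is bookkeeping. First, since $\bias(\tilde f)\geq\delta$, I would apply Lemma~\ref{lem:Bogdanov and Viola} directly to $\tilde f$ to obtain $t\leq(1+\log\frac1\epsilon)(|\F|/\delta)^{O(1)}$ directions $a_1,\ldots,a_t$ together with a function $H'$ such that $H'(\Delta_{a_1}(\tilde f),\ldots,\Delta_{a_t}(\tilde f))$ $\epsilon$-approximates $\tilde f$ (having bias larger than $\delta$ only makes $t$ smaller, so the stated bound still applies).

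Next I would replace each derivative by a bounded list of degree-$\leq2$ polynomials. By the construction of $\tilde f$, every $\Delta_{a_i}(\tilde f)$ satisfies $\rank_3(\Delta_{a_i}(\tilde f))\leq\rho$ with $\rho=\log^{O(1)}(1/\delta)$, so by Definition~\ref{def:cubic rank} we may write $\Delta_{a_i}(\tilde f)=\sum_{j=1}^{\rho}\ell_{i,j}\,q_{i,j}+q_{i,0}$ for linear functions $\ell_{i,j}$ and quadratic functions $q_{i,0},\ldots,q_{i,\rho}$. In particular $\Delta_{a_i}(\tilde f)$ is a function of the at most $2\rho+1$ polynomials $\{\ell_{i,j}\}_j\cup\{q_{i,j}\}_{j\geq 0}$, all of degree at most two (a linear function being a degree-$\leq2$ polynomial). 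Taking $Q_1,\ldots,Q_c$ to be the union of these families over all $i\in[t]$ gives $c\leq t(2\rho+1)=(1+\log\frac1\epsilon)(|\F|/\delta)^{O(1)}$, where the factor $2\rho+1=\log^{O(1)}(1/\delta)$ is absorbed into the $(|\F|/\delta)^{O(1)}$ term.

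Finally, substituting these expansions into $H'$ produces the desired $H$: it is the composition of $H'$ with the maps expressing each $\Delta_{a_i}(\tilde f)$ as a function of the appropriate $Q_k$'s, so that $H(Q_1,\ldots,Q_c)=H'(\Delta_{a_1}(\tilde f),\ldots,\Delta_{a_t}(\tilde f))$ pointwise, and therefore $H(Q_1,\ldots,Q_c)$ $\epsilon$-approximates $\tilde f$. I do not expect a genuine obstacle here; the only step that needs any care is the counting in the previous paragraph, where one invokes the rank bound $\rho=\log^{O(1)}(1/\delta)$ from Lemma~\ref{lem:good subspace of low rank for quartics} and the harmless identification of linear functions with quadratics.
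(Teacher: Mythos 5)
Your proof is correct and follows essentially the same approach as the paper: apply Lemma~\ref{lem:Bogdanov and Viola} to get an approximation of $\tilde f$ by a function of a few of its derivatives, then use the low-rank structure $\rank_3(\Delta_{a_i}(\tilde f))=\log^{O(1)}(1/\delta)$ established in Section~\ref{sec:bias 4: restricting} to expand each derivative as a function of $O(\log^{O(1)}(1/\delta))$ linear and quadratic polynomials and compose. The paper leaves these bookkeeping steps implicit, and you have filled them in correctly, including the harmless identification of linear functions with degenerate quadratics.
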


The next lemma, which is the main lemma of \cite{KaufmanLovett08}
shows that if the approximation is good enough (i.e. $\epsilon$ is
small), and if the quadratics satisfy the {\em strong regularity}
property then $\tilde f$ can in fact be computed by a small number
of quadratics.

\begin{definition}\label{def:strongly regular}
(strongly regular quadratic functions) We say that a family of
quadratic functions $\left\{ Q_{i}\right\} _{i=1}^{m}$ is
\emph{$\gamma$- strongly regular} if the following holds for every
$x_0 \in \F^n$: for independent uniform random variables
$Y_{1},...,Y_{5}$ the joint distribution of
$$\left\{Q_{j}\left(x_0+\sum_{i\in I}Y_{i}\right) \mid
j\in[m],I\subseteq[5],1\leq|I|\leq2\right\}$$ is $\gamma$ close to
the uniform distribution (recall
Definition~\ref{def:gamma-approx}).
\end{definition}

This definition is a restricted version of Definition 8 of
\cite{KaufmanLovett08} for quadratic polynomials. The interested
reader is referred to that paper for the general definition for
higher degree polynomials.

\begin{lemma}
\label{lem:Kaufman Lovett}(Lemma 13 from \cite{KaufmanLovett08})
Let $f(x)$ be a degree $d$ polynomials, $h_{1},...,h_{m}$
polynomials of degree less than $d$ and
$H:\mathbb{F}^{m}\to\mathbb{F}$ a function such that
\begin{itemize}
\item $H(h_{1},...,h_{m})$ $\epsilon$-approximates $f$ where
$\epsilon\leq2^{-2(d+1)}$.

\item $\left\{ h_{i}\right\} _{i=1}^{m}$ is a $\gamma$-strongly
regular family where $\gamma\leq\min\left\{
2^{-d},2^{-m}\right\}$.
\end{itemize}
Then there exists a function $F:\mathbb{F}^{m}\to\mathbb{F}$ such
that $f=F(h_{1},...,h_{m})$.
\end{lemma}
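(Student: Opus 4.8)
The plan is to use the plurality‑decoding / self‑correction strategy of \cite{KaufmanLovett08}. Write $h=(h_{1},\dots,h_{m})$ and put $g \eqdef H(h_{1},\dots,h_{m})$; by hypothesis $g$ is $\epsilon$‑close to $f$, and $g$ is constant on every fiber $h^{-1}(a)$, $a\in\F^{m}$ (the plurality decoder of \cite{KaufmanLovett08} works equally well in place of $H$). It suffices to prove that $f$ is itself constant on every fiber of $h$: granting this, defining $F(a)$ to be the common value of $f$ on $h^{-1}(a)$ (arbitrarily for $a$ outside the image) yields $f=F(h_{1},\dots,h_{m})$, which is the claim. So fix $x\in\F^{n}$ and define a ``voting distribution'' $\Phi(x)$ on $\F$: sample $y_{1},\dots,y_{d+1}\in\F^{n}$ independently and uniformly and output $\sum_{\emptyset\neq S\subseteq[d+1]}(-1)^{|S|+1}g\!\left(x+\sum_{i\in S}y_{i}\right)$.

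The first step is \emph{self‑correction}: the mode of $\Phi(x)$ is $f(x)$. Indeed, since $\deg f\le d$ we have $(\Delta_{y_{1}}\cdots\Delta_{y_{d+1}}f)(x)=0$ for all directions, i.e. $f(x)=\sum_{\emptyset\neq S\subseteq[d+1]}(-1)^{|S|+1}f\!\left(x+\sum_{i\in S}y_{i}\right)$ identically (the signs being immaterial in characteristic $2$). For each nonempty $S$ the point $x+\sum_{i\in S}y_{i}$ is uniformly distributed, so $\Pr[g\neq f\text{ there}]\le\epsilon$; a union bound over the $2^{d+1}-1$ nonempty subsets, together with $\epsilon\le 2^{-2(d+1)}$, shows that with probability at least $1-(2^{d+1}-1)\epsilon\ge 1-2^{-(d+1)}>\tfrac12$ every $g$‑value agrees with the corresponding $f$‑value, in which case $\Phi(x)$ outputs exactly $f(x)$.

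The technical heart is the second step: $\Phi(x)$ depends on $x$ \emph{almost only through $h(x)$}. This is where $\deg h_{j}<d$ is used, through the fact that inclusion–exclusion truncates: $h_{j}\!\left(x+\sum_{i\in S}y_{i}\right)$ is a fixed $\F$‑affine combination, with coefficients not depending on $x$, of $h_{j}(x)$ and of the values $\{\,h_{j}(x+\sum_{i\in I}y_{i}):1\le|I|\le d-1\,\}$ — and for quadratic $h_{j}$ only $|I|\le 2$ occurs, which is exactly the configuration in Definition~\ref{def:strongly regular} for $d+1=5$ directions. By $\gamma$‑strong regularity this collection of values is $\gamma$‑close to uniform, and ``uniform'' does not see $x$; hence the tuple $\big(h_{j}(x+\sum_{i\in S}y_{i})\big)_{j,\,\emptyset\neq S}$ lies within total variation $\gamma/2$ of a distribution $\mathcal{D}_{h(x)}$ that is a function of $h(x)$ alone. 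Applying the fixed deterministic map $(v^{S})_{S}\mapsto\sum_{\emptyset\neq S}(-1)^{|S|+1}H(v^{S})$, which cannot increase total‑variation distance, we get that $\Phi(x)$ is within $\gamma/2$ of a distribution $\Psi_{h(x)}$ depending only on $h(x)$.

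Combining the two steps finishes the proof. If $h(x_{0})=h(x_{1})$ then $\Phi(x_{0})$ and $\Phi(x_{1})$ are both within $\gamma/2$ of $\Psi_{h(x_{0})}=\Psi_{h(x_{1})}$, so their total‑variation distance is at most $\gamma$. By the first step $\Phi(x_{0})$ puts mass $\ge 1-2^{-(d+1)}$ on $f(x_{0})$ and $\Phi(x_{1})$ puts mass $\ge 1-2^{-(d+1)}$ on $f(x_{1})$; since $\gamma\le 2^{-d}$ and $\epsilon\le 2^{-2(d+1)}$ one checks that $1-(2^{d+1}-1)\epsilon-\gamma>\tfrac12$, so $\Phi(x_{1})$ would have to assign mass exceeding $\tfrac12$ to each of $f(x_{0})$ and $f(x_{1})$ unless $f(x_{0})=f(x_{1})$. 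Thus $f$ is constant on every fiber of $h$, as needed. The main obstacle is the second step — checking that strong regularity genuinely makes $\Phi(x)$ near‑independent of $x$ given $h(x)$ — which rests on the degree‑$(d-1)$ truncation identity and on carrying the closeness parameter through the deterministic post‑processing; the hypotheses $\epsilon\le 2^{-2(d+1)}$ and $\gamma\le\min\{2^{-d},2^{-m}\}$ are precisely what make the final mode comparison valid.
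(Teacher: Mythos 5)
The paper does not reprove this lemma; it simply cites it as Lemma 13 of \cite{KaufmanLovett08}. Your reconstruction correctly reproduces the self-correction argument of the cited source: vote using the vanishing order-$(d+1)$ finite difference of $f$, observe via the degree truncation $h_j(x+\sum_{i\in S}y_i)=\sum_{T\subseteq S,\,|T|\le\deg h_j}\bigl(\Delta_{y_T}h_j\bigr)(x)$ together with strong regularity that the voting distribution depends on $x$ only through $h(x)$ up to total variation $\gamma$, and compare the resulting mode probabilities on a common fiber of $h$ to conclude $f$ is constant there.
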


In other words, the lemma says that if $f$ is well approximated by
a family of strongly regular functions then it can actually be
computed everywhere by the functions in the family. We shall now
show that if $q_1,\ldots,q_c$ are quadratic polynomials such that
the rank of every nontrivial linear combination of them is high,
then they are strongly regular. This will imply (by
Corollary~\ref{cor:tilde f approx by quadratics}) that $\tilde f$
is a function of a few quadratics and therefore so is $f$.

\begin{lemma}
\label{lem:rank implies regular}Let $\left\{ Q_{i}\right\}
_{i=1}^{m}$ be a family of quadratic functions such that for every
nontrivial linear combination,
$\rank_2(\sum_{i=1}^{m}\alpha_{i}Q_{i})\geq R$. Then $\left\{
Q_{i}\right\}_{i=1}^{m}$is a $\gamma$-strongly regular family for
$\gamma=|\F|^{3m/2-R/4}$.
\end{lemma}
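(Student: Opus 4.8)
The plan is to reduce the strong-regularity claim to an application of the XOR lemma (Lemma~\ref{lem:XOR}), exactly as in the definition of $\gamma$-closeness to uniform. Fix $x_0 \in \F^n$ and independent uniform $Y_1,\ldots,Y_5$. The collection of random variables in Definition~\ref{def:strongly regular} is indexed by pairs $(j,I)$ with $j \in [m]$, $I \subseteq [5]$, $1 \le |I| \le 2$; there are $m \cdot \binom{5}{1,2} = m \cdot 15$ of them, but the count that matters is that for each fixed $I$ there are only $m$ of them, and $I$ ranges over $15$ sets, so the total number $M$ of variables is $15m$. To show their joint distribution is $\gamma$-close to uniform, by Lemma~\ref{lem:XOR} it suffices to bound $\bias$ of every nontrivial $\F$-linear combination of them by $\gamma/|\F|^{3M/2}$ — but I would be a little more careful and exploit that the combination decomposes by the index $I$.

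The key step is the following bias estimate. A nontrivial linear combination of the variables $\{Q_j(x_0 + \sum_{i \in I} Y_i)\}$ has the form $\sum_{I} q_I\big(x_0 + \sum_{i\in I} Y_i\big)$ where each $q_I = \sum_j \alpha_{I,j} Q_j$ is a quadratic, and at least one $q_I$ is a nontrivial linear combination of the $Q_j$'s, hence $\rank_2(q_I) \ge R$ by hypothesis. I would first handle a single term $q_I(x_0 + \sum_{i \in I}Y_i)$: substituting the $Y_i$ for $i\in I$ is an affine change of variables in a fresh set of coordinates, and since the $Y_i$ are independent and uniform, $\bias$ over the choice of these $Y_i$ of $\omega^{q_I(x_0+\sum_{i\in I}Y_i)}$ equals the bias of $q_I$ itself (up to the additive linear/constant shift coming from $x_0$ and from cross terms, which do not affect rank or bias), which is at most $|\F|^{-\rank_2(q_I)/2} \le |\F|^{-R/2}$ by Theorem~\ref{thm:Dickson} / the standard Gauss-sum bound. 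For the full sum, because the index sets $I$ of size $1$ and $2$ overlap, the terms are not independent; but I would pick one index set $I^\star$ on which $q_{I^\star}$ has rank $\ge R$ and an element $i^\star \in I^\star$ that lies in a controlled number of the other active $I$'s. Conditioning on all the $Y_i$ with $i \ne i^\star$ freezes every term not involving $Y_{i^\star}$, and leaves a sum of at most (some absolute constant number of) quadratics in $Y_{i^\star}$, one of which, $q_{I^\star}$ restricted appropriately, still has rank $\ge R - O(1)$ (restricting/shifting by the frozen variables changes the rank by at most the number of frozen affine constraints involved, which is $O(1)$). Averaging $\omega$ of that sum over the single fresh block $Y_{i^\star}$ therefore has bias at most $|\F|^{-(R-O(1))/2}$, and since bias is an average over the conditioning, the whole thing is bounded by $|\F|^{-R/2 + O(1)} \le |\F|^{-R/4}$ absorbing constants (for $R$ large; for small $R$ the statement is vacuous since $\gamma \ge 1$).

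Plugging into Lemma~\ref{lem:XOR}: the distribution is $\gamma'$-close to uniform for any $\gamma'$ with $|\F|^{-R/4} \le \gamma'/|\F|^{3M/2} = \gamma'/|\F|^{45m/2}$, i.e.\ $\gamma' = |\F|^{45m/2 - R/4}$ works. This is slightly worse than the claimed $|\F|^{3m/2 - R/4}$ because I counted $M = 15m$ variables rather than $m$; to recover the stated bound one should apply Lemma~\ref{lem:XOR} separately for each of the $15$ blocks $I$ after the conditioning argument has reduced everything to one block (so $m$ variables, giving exponent $3m/2$), then note that $R$ can be taken large enough that the $O(1)$ loss and the constant number of blocks are absorbed; in any case the exact constant in front of $m$ is immaterial for the downstream use, where $R$ will be chosen much larger than $m$. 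The main obstacle, and the only place real care is needed, is the dependency between the size-$1$ and size-$2$ index sets sharing a coordinate $Y_{i^\star}$: one must verify that freezing the other blocks leaves a genuinely high-rank quadratic in the remaining block, which is where the fact that $\rank_2$ drops by at most the codimension of the restriction (a consequence of Theorem~\ref{thm:Dickson}, cf.\ the footnote in Lemma~\ref{lem:low rank space have few common functions}) does the work.
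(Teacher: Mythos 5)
Your high-level reduction is the same as the paper's: both proofs invoke Lemma~\ref{lem:XOR} and bound the bias of every nontrivial $\F$-linear combination $Q'(Y_1,\ldots,Y_5)=\sum_{k,I}\alpha_{k,I}Q_k\bigl(x_0+\sum_{i\in I}Y_i\bigr)$ by showing that $Q'$, as a quadratic in the $5n$ variables $(Y_1,\ldots,Y_5)$, has $\rank_2(Q')=\Omega(R)$. The gap is in how you estimate that rank.

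Your conditioning strategy --- fix $Y_i$ for $i\neq i^\star$ and bound the rank of the residual quadratic in $Y_{i^\star}$ --- fails, because the degree-two part of that residual is the \emph{sum} $\sum_{I\ni i^\star}q_I$, and that sum can vanish identically even when each $q_I$ has full rank. Concretely, take $m=1$, any $Q_1$ of rank $R$ represented by a matrix $A$, and choose coefficients $\alpha_{1,\{1\}}=\alpha_{1,\{2\}}=1$, $\alpha_{1,\{1,2\}}=-1$, all others zero. Then
\[
Q'=Q_1(x_0+Y_1)+Q_1(x_0+Y_2)-Q_1(x_0+Y_1+Y_2)
\]
has degree-two part exactly $-Y_1^tAY_2-Y_2^tAY_1$: the pure $Y_1^tAY_1$ and $Y_2^tAY_2$ terms cancel, and only the cross-block survives. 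Conditioning on $Y_2,\ldots,Y_5$ therefore leaves a \emph{linear} function of $Y_1$, so your bias estimate gives nothing even though $\bias(Q')$ is genuinely small. Your remark that ``$q_{I^\star}$ restricted appropriately still has rank $\geq R-O(1)$'' is true for a single summand but beside the point: the relevant object after conditioning is the sum of all summands sharing the block $i^\star$, and rank is not robust under such sums.

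The paper sidesteps this by working directly with the $5n\times5n$ block matrix $C=\sum_{k,I}\alpha_{k,I}B^{k,I}$ representing $Q'$, where $(B^{k,I})_{i,j}=A_k$ if $i,j\in I$ and $0$ otherwise. For $i\neq j$, the $(i,j)$ block of $C+C^t$ is precisely $\sum_k\alpha_{k,\{i,j\}}(A_k+A_k^t)$: the pair $\{i,j\}$ is \emph{isolated} by that off-diagonal block, with no interference from the singletons or from other pairs. Since a matrix's rank dominates that of any submatrix, if some pair coefficient is nonzero one gets $\rank_2(Q')\geq\frac12\rank(C_{i,j}+C_{j,i}^t)\geq\frac14\rank_2\bigl(\sum_k\alpha_{k,\{i,j\}}Q_k\bigr)>R/4$, and if all pair coefficients vanish a diagonal block $C_{i,i}$ does the same job with no possibility of cancellation. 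To repair your argument you would need an analogue of this isolation --- e.g.\ when $|I^\star|=2$, condition only on $Y_k$ for $k\notin I^\star$ and analyze the quadratic in the two blocks $(Y_i,Y_j)$ \emph{jointly}, so the cross-block is retained --- but conditioning all the way down to a single block destroys exactly the information you need.

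(On constants: applying Lemma~\ref{lem:XOR} to all $15m$ variables would literally give $\gamma=|\F|^{45m/2-R/4}$ rather than $|\F|^{3m/2-R/4}$; the paper is loose here as well, and as you note the exponent of $|\F|^m$ is immaterial downstream, so this is not the substantive issue.)
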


\begin{proof}
The proof is based on the analogy between quadratic functions and
matrices.

\begin{definition}\label{def:matrix}
Let $Q:\mathbb{F}^{n}\to\mathbb{F}$ be a quadratic polynomial and
$A\in\mathbb{F}^{n\times n}$ an $n\times n$ matrix. We say that
$A$ represents $Q$ if there exists a linear function $\ell$ such
that $Q(x)=x^{t}Ax+\ell(x)$.
\end{definition}

Notice that there may be many different matrices representing the
same polynomial $Q$. For example, every antisymmetric matrix
represents the zero function. More generally, if $S$ is
antisymmetric then $A$ and $A+S$ represent the same polynomial.

\begin{lemma}
Let $q$ be a quadratic polynomial. Then $\rank_2(q)$ (recall
Definition~\ref{def:rank of quadratic}) is equal to the minimal
rank of a matrix representing $q$. Moreover, for every matrix $A$
representing $q$ we have that $\rank(A+A^t)/2\leq \rank_2(q) \leq
\rank(A+A^t)$.
\end{lemma}

We shall prove the lemma for $\F=\F_2$. The proof for other fields
is similar.

\begin{proof}
Let $\rank_2(q)=r$. Then $q$ can be expressed as
$\sum_{i=1}^{r}\left(\sum_{j=1}^{n}a_{i,j}x_{j}\right)
\left(\sum_{j=1}^{n}b_{i,j}x_{j}\right) + \ell(x)$. Set
$A=(a_{i,j}),B=(b_{i,j})\in \F^{r \times n}$. It is clear that
$A^{t}B$ represents $q$ and that $\rank(A^t B) \leq r$. On the
other hand, if $q$ can be represented by a rank $r$ matrix $A$,
then let $\ell_1,\ldots,\ell_r$ be a basis for the rows of $A$,
when interpreted as linear functions.\footnote{I.e.
$(a_1,\ldots,a_n) \leftrightarrow \sum_{i=1}^{n}a_i \cdot x_i$.}
Let $A_i$ be the $i$-th row of $A$ and denote $A_i =
\sum_{j=1}^{r} \alpha_{i,j} \ell_j$. We have that for some linear
function $\ell$, \[ q-\ell=x^{t}Ax=\sum_{i=1}^{n}x_{i}A_{i}(x)
=\sum_{i=1}^{n}x_{i} \sum_{j=1}^{r}\alpha_{i,j}\ell_{j}
=\sum_{j=1}^{r}\ell_{j}\left(\sum_{i=1}^{n}\alpha_{i,j}x_{i}\right)=
\sum_{j=1}^{r}\ell_{j}\ell'_{j}\;,\] where
$\ell'_1,\ldots,\ell'_r$ are linear functions. This implies that
$\rank_2(q)\leq r$. Thus, $\rank_2(q)=\min\left\{ \rank(A)\mid
q(x)=x^{t}Ax+\ell(x)\right\} $.

To prove the second claim, let $A$ be any matrix representing $q$.
We first change the basis of the space so that with respect to the
new basis $q$ will have the form of Theorem~\ref{thm:Dickson}. Let
$T$ be an invertible matrix representing the change of basis.
Clearly, $T^t A T$ represents $q \circ T = \sum_{i=1}^{r}
x_{2i-1}x_{2i} + \ell$, where $r = \rank_2(q)$. Thus, the matrix
$T^t A T$ can be written as $D+S$ where $D$ is a block diagonal
matrix consisting of $r$ nonzero blocks of size $2 \times 2$ and
$S$ is a symmetric matrix. We also note that for each $2 \times 2$
diagonal block $C$ of $D$ it holds that $C+C^t \neq 0$. We thus
get that
$$\rank(A+A^t) = \rank(T^t(A+A^t)T)= \rank(D+S + D^t + S^t) =
\rank(D+D^t)\;.$$ Now, for every $2 \times 2$ diagonal block $C$
of $D$ we have that $1\leq \rank(C+C^t) \leq 2$ and so
$$\rank_2(q) =r \leq \rank(D+D^t) \leq 2r = 2\rank_2(q)\;.$$
This completes the proof of the Lemma.\footnote{From the proof it
actually follows that over $\F_2$, $\rank_2(q)=\rank(A+A^t)/2$ but
this is not the case for other prime fields.}
\end{proof}

We continue the proof of Lemma~\ref{lem:rank implies regular}.
Using the above observation we now prove that any nontrivial
linear combination
$\sum_{k\in[m],I\subseteq[5],1\leq|I|\leq2}\alpha_{k,I}Q_{j}(x+\sum_{i\in
I}Y_{i})$ has high rank (as a quadratic polynomial in the
variables $Y_1\cup\ldots\cup Y_5$).

Fix $x=x_0$ and let $A_{k}$ be a matrix representing $Q_{k}$.
Notice that the quadratic polynomial $Q_{k}(x_0+\sum_{i\in
I}Y_{i})$ (in the variables $\cup_{i=1}^{5}Y_i$) can be
represented by a block matrix $B^{k,I}\in\mathbb{F}^{5n\times5n}$.
Indeed, consider a $5\times 5$ matrix that has $1$ in the
$(i,j)$-position iff $i,j\in I$, and zeros otherwise. Now, replace
any $1$ by the matrix $A_k$ and every $0$ by the $n \times n$ zero
matrix. It is an easy calculation to see that this matrix
represents $Q_{k}(x_0+\sum_{i\in I}Y_{i})$. We shall abuse
notations and for $i,j \in I$ say that $(B^{k,I})_{i,j} = A_k$,
and that otherwise $(B^{k,I})_{i,j} = 0$.

Clearly, the linear combination $$Q'\eqdef
\sum\{\alpha_{k,I}Q_{k}(x+\sum_{i\in I}Y_{i}) \mid
{k\in[m],I\subseteq[5],1\leq|I|\leq2}\}$$ is represented by the
matrix $$C\triangleq\sum\{\alpha_{k,I}B^{k,I} \mid
{k\in[m],I\subseteq[5],1\leq|I|\leq2}\}\;.$$ Observe that for
$i\neq j\in[5]$, $C_{i,j}=\sum_{k\in[m]}\alpha_{k,\left\{
i,j\right\} }A_{k}$. We now show that if for some $i\neq j\in[5]$
and $k\in[m]$ it holds that $\alpha_{k,\left\{ i,j\right\} }\neq0$
then the rank of $C^t+C$ (and hence of $Q'$) is high.
\begin{eqnarray*} \rank_2(Q') &=&
\rank_2\left(\sum\left\{\alpha_{k,I}Q_{k}(x+\sum_{i\in I}Y_{i})
\mid {k\in[m],I\subseteq[5],1\leq|I|\leq 2}\right\}\right)\\
&\geq& \frac{1}{2}\rank(C+C^{t}) \geq
\frac{1}{2}\rank(C_{i,j}+C_{j,i}^{t})\\
&=& \frac{1}{2}\rank\left(\sum_{k\in[m]}\alpha_{k,\left\{
i,j\right\}}\left(A_{k}+A_{k}^{t}\right)\right)\\
&\geq& \frac{1}{4}\rank_2\left(\sum_{k\in[m]}\alpha_{k,\left\{
i,j\right\} }Q_{k}\right)>\frac{1}{4}R \;.
\end{eqnarray*}
If it is not the case, namely, for all $i\neq j\in[5],k\in[m]$
$\alpha_{k,\left\{ i,j\right\} }=0$, then there is some $i\in[5]$
and $k\in[m]$ such that $\alpha_{k,\left\{ i\right\} }\neq0$ and
we get that same result by considering $C_{i,i}$ instead.

To conclude, every nontrivial linear combination of $\left\{
Q_{j}(x+\sum_{i\in I}Y_{i})\right\}
_{k\in[m],I\subseteq[5],1\leq|I|\leq2}$ has rank grater than
$\frac{1}{4}R$. Lemma~\ref{lem:rank of biased quadratic} implies
that the bias of every such linear combination is bounded by
$|\F|^{-R/4}$. It now follows by Lemma~\ref{lem:XOR} that the
distribution is $|\F|^{3m/2-R/4}$ close to the uniform
distribution as needed.
\end{proof}

We thus get the following corollary.

\begin{corollary}\label{cor:approximating by regular}
Let $g(x)$ be a degree $d$ polynomials, $q_{1},...,q_{m}$
quadratic polynomials and $H:\mathbb{F}^{m}\to\mathbb{F}$ a
function such that
\begin{itemize}
\item $H(h_{1},...,h_{m})$ $\epsilon$-approximates $g$ where
$\epsilon\leq2^{-2(d+1)}$.

\item The bias of every non trivial combination of
$h_1,\ldots,h_m$ is $|\F|^{-\Omega(m+d)}$.
\end{itemize}
Then there exists a function $G:\mathbb{F}^{m}\to\mathbb{F}$ such
that $g=G(h_{1},...,h_{m})$.
\end{corollary}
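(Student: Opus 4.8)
The plan is to derive Corollary~\ref{cor:approximating by regular} by combining Lemma~\ref{lem:rank implies regular} with Lemma~\ref{lem:Kaufman Lovett}, after first converting the ``bias is small'' hypothesis into a ``rank is large'' hypothesis and choosing the quantitative parameters appropriately. The only real work is bookkeeping with the constants so that the two hypotheses of Lemma~\ref{lem:Kaufman Lovett} (the approximation parameter $\epsilon \le 2^{-2(d+1)}$ and the strong-regularity parameter $\gamma \le \min\{2^{-d}, 2^{-m}\}$) are both met.

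First I would translate the second bullet. By Lemma~\ref{lem:rank of biased quadratic}, if a quadratic $q$ has $\bias(q) \le |\F|^{-s}$ then $\rank_2(q) \ge s$ (since a quadratic of rank $t$ has bias at least $|\F|^{-t}$, up to the additive $1$ in the statement, which I would absorb into the $\Omega$). Hence the assumption that every nontrivial linear combination $h_\alpha = \sum_i \alpha_i h_i$ satisfies $\bias(h_\alpha) = |\F|^{-\Omega(m+d)}$ means that every such combination has $\rank_2(h_\alpha) \ge R$ for some $R = \Omega(m+d)$, with a constant we are free to enlarge by strengthening the hypothesis's implicit constant. Then Lemma~\ref{lem:rank implies regular} gives that $\{h_i\}_{i=1}^m$ is a $\gamma$-strongly regular family for $\gamma = |\F|^{3m/2 - R/4}$. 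Choosing the constant in $R = \Omega(m+d)$ large enough — say $R \ge 6m + 4d + 4$ — forces $\gamma = |\F|^{3m/2 - R/4} \le |\F|^{-(m+d+1)} \le \min\{2^{-d}, 2^{-m}\}$, so the second hypothesis of Lemma~\ref{lem:Kaufman Lovett} holds. The first hypothesis is exactly our first bullet, $\epsilon \le 2^{-2(d+1)}$. Applying Lemma~\ref{lem:Kaufman Lovett} with $h_i$ of degree $2 < d$ (note $d \ge 3$ in all our applications, and in any case $d \ge 2$ suffices for the $h_i$ to have strictly smaller degree when they are genuinely used, and the statement is only interesting then) yields a function $G : \F^m \to \F$ with $g = G(h_1,\ldots,h_m)$, which is the conclusion.

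The main obstacle — really the only point requiring any care — is that the hypothesis of the corollary is stated with an unspecified $\Omega(m+d)$, so I must make sure the chain of inequalities $\bias(h_\alpha) \le |\F|^{-R} \Rightarrow \rank_2(h_\alpha) \ge R \Rightarrow \gamma \le |\F|^{-(m+d+1)} \Rightarrow \gamma \le \min\{2^{-d},2^{-m}\}$ goes through for a single explicit choice of the hidden constant (e.g.\ requiring $R \ge 6m+4d+6$ comfortably suffices, since then $3m/2 - R/4 \le -(m+d+1)$ and $|\F|^{-(m+d+1)} \le 2^{-(m+d+1)} \le \min\{2^{-d},2^{-m}\}$). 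I would also remark that this constant is absolute (independent of $|\F|$, $m$, $d$, $n$), so the statement ``$|\F|^{-\Omega(m+d)}$'' is legitimate. Everything else is a direct citation of the preceding lemmas, so no further calculation is needed.

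\begin{proof}[Proof of Corollary~\ref{cor:approximating by regular}]
We apply Lemma~\ref{lem:Kaufman Lovett} with the $h_i$'s, which are quadratics and hence of degree $2 < d$ (the statement is vacuous unless $d \ge 3$). The first hypothesis of Lemma~\ref{lem:Kaufman Lovett}, that $H(h_1,\ldots,h_m)$ $\epsilon$-approximates $g$ with $\epsilon \le 2^{-2(d+1)}$, is the first bullet. For the second hypothesis, fix the constant in the assumed bound ``$\bias(h_\alpha) = |\F|^{-\Omega(m+d)}$'' so that in fact $\bias\left(\sum_{i=1}^m \alpha_i h_i\right) \le |\F|^{-R}$ for every nontrivial $(\alpha_i)$, where $R \ge 6m + 4d + 6$. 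By Lemma~\ref{lem:rank of biased quadratic}, this gives $\rank_2\left(\sum_{i=1}^m \alpha_i h_i\right) \ge R$ for every nontrivial linear combination. Lemma~\ref{lem:rank implies regular} then shows that $\{h_i\}_{i=1}^m$ is a $\gamma$-strongly regular family with $\gamma = |\F|^{3m/2 - R/4}$. Since $R \ge 6m + 4d + 6$ we have $3m/2 - R/4 \le -(m+d+1) \le -(d+1)$ and $3m/2 - R/4 \le -(m+1)$, so $\gamma \le |\F|^{-(d+1)} \le 2^{-d}$ and $\gamma \le |\F|^{-(m+1)} \le 2^{-m}$, i.e.\ $\gamma \le \min\{2^{-d}, 2^{-m}\}$. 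Both hypotheses of Lemma~\ref{lem:Kaufman Lovett} are therefore satisfied, and we conclude that there is a function $G : \F^m \to \F$ with $g = G(h_1,\ldots,h_m)$.
\end{proof}
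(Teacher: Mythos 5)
Your overall plan---translate the bias hypothesis into a rank bound, feed it to Lemma~\ref{lem:rank implies regular} to obtain strong regularity, then invoke Lemma~\ref{lem:Kaufman Lovett}---is exactly the route the paper intends, and your bookkeeping with the constants (choosing $R\geq 6m+4d+6$ so that $|\F|^{3m/2-R/4}\leq\min\{2^{-d},2^{-m}\}$) is fine. The problem is the very first translation. You assert that ``a quadratic of rank $t$ has bias at least $|\F|^{-t}$,'' and deduce $\bias(q)\leq|\F|^{-s}\Rightarrow\rank_2(q)\geq s$. But Lemma~\ref{lem:rank of biased quadratic} only gives the \emph{one-sided} bound $\rank_2(q)\leq\log_{|\F|}(1/\bias(q))$, i.e.\ $\bias(q)\leq|\F|^{-\rank_2(q)}$, and this does not invert: a quadratic whose canonical linear part is not absorbed by the quadratic part has bias exactly $0$, regardless of rank. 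Over $\F_2$, $q=x_1x_2+x_3$ has $\rank_2(q)=1$ and $\bias(q)=0$, so ``small bias of all nontrivial combinations'' does not force ``large rank of all nontrivial combinations.''

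This is not cosmetic. Strong regularity is a condition on the joint distribution of the \emph{shifted} polynomials $Q_j(x_0+\sum_{i\in I}Y_i)$, and those shifts can destroy the free linear parts that made the original biases vanish. Already with $m=1$ and $h_1=x_1x_2+x_3$ over $\F_2$: every nontrivial combination of $\{h_1\}$ has bias $0$, yet the shifted combination $h_1(Y_1)+h_1(Y_2)+h_1(Y_1+Y_2)=Y_{1,1}Y_{2,2}+Y_{2,1}Y_{1,2}$ has bias $1/4$, so the joint distribution of the $h_1(x_0+\sum_{i\in I}Y_i)$'s is far from uniform and $\{h_1\}$ is not $\gamma$-strongly regular for any small $\gamma$. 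Thus the bias hypothesis alone cannot deliver the second hypothesis of Lemma~\ref{lem:Kaufman Lovett} via Lemma~\ref{lem:rank implies regular}. Notice that wherever the paper actually applies this corollary (Lemma~\ref{lem:computinf f using few quadratics}), it first establishes a \emph{rank} lower bound on all nontrivial combinations, and it is that rank bound---not the bias bound it implies---that Lemma~\ref{lem:rank implies regular} needs. The clean fix to your write-up is therefore to treat the second bullet as the rank condition $\rank_2\bigl(\sum_i\alpha_ih_i\bigr)\geq R=\Omega(m+d)$; with that replacement the remainder of your argument is correct and matches the paper.
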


We now show that $\tilde f$  can be computed by a few quadratics.

\begin{lemma}\label{lem:computinf f using few quadratics}
Let $g:\F^n \to \F$ be a quartic polynomial such that for every
$y$, $\rank_3(\Delta_y(f)) \leq \poly(1/\delta)$. Then there exist
a subspace $W$,  $c=\poly(|\F|/\delta)$ quadratics
$q'_1,\ldots,q'_c$ and a function $G$ such that, $\dim(W) = n -
\poly(|\F|/\delta)$ and $g|_W=G(q'_1,\ldots,q'_c)$.
\end{lemma}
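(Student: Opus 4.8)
The plan is to combine the approximation machinery (Corollary~\ref{cor:tilde f approx by quadratics}) with the regularization lemma (Lemma~\ref{lem:regularizing a set of quadratics}) and the rank-implies-regularity lemma (Lemma~\ref{lem:rank implies regular}), exactly in the order suggested by the paragraph opening Section~\ref{sec:bias 4}. First I would fix a target error parameter $\epsilon \leq 2^{-2(4+1)}$ (a constant depending only on $d=4$), and apply Corollary~\ref{cor:tilde f approx by quadratics} to $g$, which since $\bias(g) \geq \delta$ and all its partial derivatives have rank $\poly(1/\delta)$ yields $c_0 = \poly(|\F|/\delta)$ quadratic polynomials $Q_1,\ldots,Q_{c_0}$ and a function $H$ with $H(Q_1,\ldots,Q_{c_0})$ $\epsilon$-approximating $g$. (To invoke that corollary verbatim we either apply the reduction to $\tilde f$ as in Section~\ref{sec:bias 4: restricting}, or simply note that the corollary's hypothesis — high bias plus low-rank derivatives — is exactly what is assumed here.)

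Next I would clean up the family $\{Q_i\}$. Apply Lemma~\ref{lem:regularizing a set of quadratics} to $Q_1,\ldots,Q_{c_0}$ with the rank threshold $r = \Omega(c_0 + d) \cdot \log|\F|$ chosen large enough that $|\F|^{-r/4 + 3c_0/2}$ is below the $\gamma$ demanded by Lemma~\ref{lem:Kaufman Lovett} — concretely $\gamma \leq \min\{2^{-d}, 2^{-c_0}\}$, so $r = O(c_0) = \poly(|\F|/\delta)$ suffices. This produces a subspace $W \subseteq \F^n$ of codimension at most $c_0(r+1) = \poly(|\F|/\delta)$ and a subfamily $q'_1,\ldots,q'_c$ (with $c \leq c_0$) such that, restricted to $W$ (and to every affine shift of $W$), each $Q_i|_W$ lies in $\mathrm{span}\{1, q'_1|_W,\ldots,q'_c|_W\}$, and every nontrivial linear combination $\sum_j \alpha_j q'_j|_W$ has $\rank_2 > r$. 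In particular $H(Q_1,\ldots,Q_{c_0})|_W$ is a function of $q'_1,\ldots,q'_c$, so $g|_W$ is $\epsilon$-approximated by $\tilde H(q'_1,\ldots,q'_c)$ for some function $\tilde H$, and the $\epsilon$-approximation is preserved on $W$ because $W$ has constant codimension only if $\epsilon$ is an absolute constant — which it is. (If one wants the approximation quality on $W$ to match that on all of $\F^n$ one uses the averaging-over-cosets argument: on an average affine shift the approximation error is still $\epsilon$, and one picks a good shift; I would include this one-line remark.)

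Finally I would feed this into Lemma~\ref{lem:Kaufman Lovett}. By Lemma~\ref{lem:rank implies regular}, since every nontrivial linear combination of $q'_1,\ldots,q'_c$ has $\rank_2 > r$, the family $\{q'_i|_W\}$ is $\gamma'$-strongly regular with $\gamma' = |\F|^{3c/2 - r/4} \leq \min\{2^{-d}, 2^{-c}\}$ by our choice of $r$; and $g|_W$ is $\epsilon$-approximated by $\tilde H(q'_1,\ldots,q'_c)$ with $\epsilon \leq 2^{-2(d+1)}$. Lemma~\ref{lem:Kaufman Lovett} (applied with the degree-$d$ polynomial $g|_W$ and the degree-$2 < d$ polynomials $q'_i$) then gives a function $G$ with $g|_W = G(q'_1,\ldots,q'_c)$. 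Tallying parameters: $c = \poly(|\F|/\delta)$ and $\dim(W) = n - \poly(|\F|/\delta)$, which is the claim. The main obstacle, and the point that requires care rather than the routine bookkeeping, is making sure the chain of parameter inequalities is simultaneously satisfiable: $r$ must be large enough that $\gamma'$ beats $2^{-c}$ (which itself grows with $c \leq c_0$), yet $r$ enters the codimension of $W$ multiplicatively with $c_0$, so one must check that $r = O(c_0)$ already suffices — it does, because $\gamma' = |\F|^{3c/2 - r/4}$ decays linearly in $r$ while we only need it below $2^{-c}$, i.e. $r = \Theta(c) = \poly(|\F|/\delta)$ is enough. One should also double-check that Lemma~\ref{lem:rank implies regular} is being applied in the right ambient space (the variables of $W$, or equivalently $\F^m$ after the linear change of coordinates), since strong regularity is a statement about fresh copies $Y_1,\ldots,Y_5$ of that variable set.
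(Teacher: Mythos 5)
Your proof is correct and follows essentially the same chain as the paper: Bogdanov--Viola approximation by derivatives, regularization via Lemma~\ref{lem:regularizing a set of quadratics} (with the averaging-over-cosets step to preserve the approximation on the chosen shift), then Lemma~\ref{lem:rank implies regular} together with the Kaufman--Lovett Lemma~\ref{lem:Kaufman Lovett} to upgrade approximation to equality. The only cosmetic difference is that the paper first restricts to a subspace $U$ fixing the linear functions coming from the low-$\rank_3$ derivatives before regularizing the genuine quadratics, whereas you fold the linear functions directly into the regularization step; and the parenthetical remark about ``constant codimension'' is muddled, but the averaging argument you then give is the correct (and paper's own) justification.
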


\begin{proof}
Applying Lemma~\ref{lem:Bogdanov and Viola}, and using the fact
that every partial derivative of $g$ has a low rank, we conclude
that for $\epsilon = 2^{-20}$ there exist $c = \poly(|\F|/\delta)$
linear and quadratic functions, and a function $H$, such that
$H(\ell_1,\ldots,\ell_c,q_1,\ldots,q_c)$ $\epsilon$-approximates
$g$. Let $r = \poly(|\F|/\delta)$ and $U = \{x:
\ell_1(x)=\alpha_1,\ldots,\ell_c(x)=\alpha_c\}$ be some subspace
such that $H(\alpha_1,\ldots,\alpha_c,q_1|_U,\ldots,q_c|_U)$
$\epsilon$-approximates $g|_U$. Applying
Lemma~\ref{lem:regularizing a set of quadratics} on
$q_1|_U,\ldots,q_c|_U$ and $r$ we get that there exists a
(possible affine) subspace $W\subseteq U$ and $c' \leq c$ such
that: $\dim(W) \geq \dim(U) - (r+1)c \geq n - (r+2)c=n -
\poly(|\F|/\delta)$; w.l.o.g. for every $i=1\dots c$, $q_i|_W \in
\mathrm{span}\{q_1|_W,\ldots,q_{c'}|_W\}$; any nontrivial linear
combination of $q_1|_W,\ldots,q_{c'}|_W$ has rank larger than $r$;
${g}|_W$ is $\epsilon$-approximated by
$H(\ell_1|_W,\ldots,\ell_c|_W,q_1|_W,\ldots,q_c|_W)$ (this follows
by picking an adequate shift of the linear space in the lemma).
Hence, ${g}|_W$ is $\epsilon$-approximated by
$H(\ell_1|_W,\ldots,\ell_c|_W,q_1|_W,\ldots,q_c|_W) =
H'(q_1|_W,\ldots,q_{c'}|_W)$ for some $H'$. The reason for passing
to $W$ is that now any nontrivial linear combination of
$q_1|_W,\ldots,q_{c'}|_W$ has rank larger than $r$. We thus get by
Corollary~\ref{cor:approximating by regular} that there is some
function $G$ such that $g|_W = G(q_1|_W,\ldots,q_{c'}|_W)$.
\end{proof}

Recall that we assume w.l.o.g. that for every $y\in \F^{n-m}$,
$\rank_3(\Delta_y(\tilde f)) \leq \poly(1/\delta)$. Thus, the
lemma above implies the following corollary.

\begin{corollary}\label{cor:representing tilde f}
In the notations of the proof, there exist a subspace $Z \subset
\F^{n-m}$ of dimension $\dim(Z) \geq n - \poly(|\F|/\delta)$ such
that $\tilde{f}|_Z = F(q_1,\ldots,q_c)$, for
$c=\poly(|\F|/\delta)$ quadratic polynomials and some function
$F$.
\end{corollary}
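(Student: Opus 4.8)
The plan is to read this off directly from Lemma~\ref{lem:computinf f using few quadratics}, applied with the quartic polynomial $g$ there taken to be $\tilde f$. First I would recall what was set up in Section~\ref{sec:bias 4: restricting}: passing from $f$ to $f'$ via Equation~\eqref{eq: quartic on subspace} and then fixing $x_1,\ldots,x_m$ to suitable constants as in Equation~\eqref{eq:quartic setting to constants}, we obtained $\tilde f \in \F[x_{m+1},\ldots,x_n]$ with $\bias(\tilde f)\geq\delta$ and, for every direction $y=(y_{m+1},\ldots,y_n)\in\F^{n-m}$, the bound $\rank_3(\Delta_y(\tilde f)) = O(\log^5(1/\delta)) = \poly(1/\delta)$. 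This is precisely the hypothesis ``for every $y$, $\rank_3(\Delta_y(g))\leq\poly(1/\delta)$'' required by Lemma~\ref{lem:computinf f using few quadratics}, so no extra work is needed to meet its assumptions.

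Granting that, I would simply invoke the lemma: it yields a (possibly affine) subspace $W\subseteq\F^{n-m}$ with $\dim(W) = (n-m) - \poly(|\F|/\delta) = n - \poly(|\F|/\delta)$, a list of $c=\poly(|\F|/\delta)$ quadratic polynomials, and a function $G$ such that $\tilde f|_W = G$ evaluated on those quadratics. Renaming $W$ to $Z$, the quadratics to $q_1,\ldots,q_c$, and $G$ to $F$ gives the statement verbatim. If one insists that $Z$ be a genuine linear subspace rather than an affine one, a translation of coordinates suffices; this changes neither degrees nor $\rank_3$ of any of the relevant polynomials, and it can be absorbed into the $\poly(|\F|/\delta)$ loss in the dimension.

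The only point needing a moment's attention — bookkeeping rather than a genuine obstacle — is to be sure the rank bound on $\Delta_y(\tilde f)$ holds for \emph{all} directions $y\in\F^{n-m}$, not merely for a large fraction of them, so that Lemma~\ref{lem:computinf f using few quadratics} applies with no modification. This was already arranged: restricting to the good subspace $V$ of Lemma~\ref{lem:good subspace of low rank for quartics} made every derivative (in directions lying in $V$) of low rank, and fixing $x_1,\ldots,x_m$ alters a derivative only by a degree-two polynomial, which by Theorem~\ref{thm:Dickson} leaves $\rank_3$ unchanged (as noted right after Equation~\eqref{eq:quartic setting to constants}). Hence there is nothing further to prove.
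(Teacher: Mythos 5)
Your proposal is correct and matches the paper's approach exactly: the paper presents this corollary as an immediate consequence of Lemma~\ref{lem:computinf f using few quadratics} applied to $\tilde f$, noting that every directional derivative of $\tilde f$ has $\rank_3$ bounded by $\poly(1/\delta)$ (and that $\bias(\tilde f)\geq\delta$, which is also needed since the lemma's proof invokes Lemma~\ref{lem:Bogdanov and Viola}). Your dimension bookkeeping ($m = O(\log_{|\F|}(1/\delta))$ being absorbed into $\poly(|\F|/\delta)$) and your remark about the affine-versus-linear subspace are both consistent with what the paper leaves implicit.
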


\subsection{The structure of $f$}

We now show that we can represent $\tilde f$ as $\tilde{f} =
\sum_{i=1}^{k} \ell_i \cdot g_i + \sum_{i=1}^{k} q'_i \cdot q''_i$
where $k = \poly(|\F|/\delta)$, the $\ell_i$-s are linear, the
$q'_i$-s and $q''_i$-s are quadratic and the $g_i$-s are cubic
polynomials. For this we will transform the quadratic polynomials
to be what we denote as {\em disjoint polynomials}.

\begin{definition}\label{def:disjoint polynomials}
We say that the quadratic polynomials $\left\{ Q_{i}\right\}
_{i=1}^{m}$ are disjoint if there is a linear transformation $T$,
$2m$ variables $\left\{ x_{i}\right\} _{i=1}^{m}\cup\left\{
y_{i}\right\} _{i=1}^{m}$, where possibly for several $i$-s
$x_i=y_i$, and quadratic functions $\left\{ Q_{i}'\right\}
_{i=1}^{m}$ such that for every $k\in[m]$, $Q_{k}\circ T
=x_{k}y_{k}+Q_{k}'$ where no degree two monomial in $Q'_k$
contains a variable from $\left\{ x_{i}\right\}
_{i=1}^{m}\cup\left\{ y_{i}\right\} _{i=1}^{m}$.
\end{definition}

\begin{lemma}\label{lem:making disjoint}
Let $q_1,\ldots,q_c$ be quadratic polynomials from $\F^n$ to $\F$.
Assume that the rank of every nontrivial linear combination of
them is at least $r$. Then there exist a subspace $V
\subseteq\F^n$ of dimension $\geq n-2c^2$ and $c'\leq c$ quadratic
polynomials $q'_1,\ldots,q'_{c'}:V\to \F$ satisfying: the $q'_i$-s
are disjoint; every nontrivial linear combination of the $q'_i$-s
has rank at least $r - 2c^2$; $\mathrm{span}(q'_1,\ldots,q'_{c'})
= \mathrm{span}(q_1|_V,\ldots,q_c|_V)$.
\end{lemma}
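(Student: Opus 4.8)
The plan is to process the quadratics $q_1,\dots,q_c$ one at a time, at each step "peeling off" a single product of two fresh linear forms from one of them (in the spirit of Theorem~\ref{thm:Dickson}), setting those linear forms to zero on a codimension-$\le 2$ subspace, and recursing. More precisely, I will maintain a working list of quadratics together with a subspace on which I have restricted, and an invariant that the restricted quadratics are spanned by a subfamily and that every nontrivial combination still has large rank. At a generic step, pick the first quadratic $q$ in the current list that is not yet in the span of the quadratics already made disjoint. Since every nontrivial linear combination has rank $\ge r$ (and we will see it stays $\ge r - O(c^2)$ throughout), $q$ is not a linear function, so by Theorem~\ref{thm:Dickson} we may write $q = x\cdot y + q''$ in suitable coordinates, where $x,y$ are linearly independent linear forms and no degree-two monomial of $q''$ involves $x$ or $y$ (over odd characteristic one uses the $\sum \alpha_i x_i^2$ form and sets $x=y$ equal to one of the square variables). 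Record $q$ as a new disjoint polynomial and pass to the subspace $\{x = y = 0\}$ (codimension $\le 2$), on which $q$ itself becomes purely quadratic in the remaining variables but that is fine — what matters is that the linear forms $x,y$ of $q$ do not appear in any later-processed quadratic, because those get restricted too. Actually the cleaner bookkeeping is: as we peel $q_i$ we fix two linear forms $\ell_{2i-1}, \ell_{2i}$ and we keep them as part of the coordinate system, never zeroing them; after all $c$ steps the peeled-off leading products involve $2c$ linear forms that are linearly independent, and by re-choosing coordinates we can guarantee the "$Q'_k$ avoids all the $x_i,y_i$" condition simultaneously — this is the Gram–Schmidt-like cleanup that I expect to be the fiddly part.

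The steps in order: (1) Run the loop; each iteration either discovers that the current $q_i$ already lies in the span of the previously selected disjoint quadratics modulo lower-rank corrections (then skip it, which is how $c' \le c$ arises), or selects it and introduces two new linear forms. Since we do at most $c$ selections and each costs codimension $\le 2$, after cleanup we lose codimension $\le 2c$ for the peeling, but the simultaneous-disjointness normalization (subtracting from each $Q'_k$ the parts of other selected quadratics' leading forms) can introduce cross terms that must themselves be cleared, and iterating this triangular cleanup over the $\le c$ selected quadratics costs up to $\binom{c}{2}$-many extra coordinate identifications — this is where the $2c^2$ bound comes from rather than $2c$. (2) Track the rank drop: restricting a quadratic to a subspace of codimension $k$ lowers $\rank_2$ by at most $2k$ (immediate from Theorem~\ref{thm:Dickson}: fixing one linear coordinate kills at most one block, or at most two square terms), so after restricting to codimension $\le 2c^2$ every nontrivial combination of the surviving $q'_i$ still has rank $\ge r - 2c^2$. (3) The span equality $\mathrm{span}(q'_1,\dots,q'_{c'}) = \mathrm{span}(q_1|_V,\dots,q_c|_V)$ holds because every operation we performed — restriction, and subtracting one selected quadratic from another — is a linear operation that stays inside the span, and the skipped $q_i$ were skipped precisely because they were already dependent.

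The main obstacle I anticipate is making the disjointness \emph{simultaneous}: peeling a product $x_i y_i$ off $q_i$ is easy in isolation, but the "no degree-two monomial of $Q'_k$ contains any $x_i$ or $y_i$" requirement couples all $c$ quadratics. I would handle it by an outer induction on $c$ exactly as in Lemma~\ref{lem:regularizing a set of quadratics}: peel $q_1 = x_1 y_1 + Q'_1$ with $x_1,y_1$ fresh, then for $i \ge 2$ replace $q_i$ by $q_i$ minus whatever multiple of $x_1 y_1$ (and whatever $x_1$- or $y_1$-linear terms) it carries — this changes $q_i$ only by adding a rank-$\le 2$ quadratic plus terms we can absorb, and it does not affect the large-rank hypothesis by more than the already-accounted codimension budget — then recurse on the reduced family $q_2,\dots,q_c$ living in the variables disjoint from $x_1,y_1$. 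Unwinding the recursion gives exactly $c'\le c$ disjoint quadratics on a subspace of codimension $\sum_{i=1}^{c} 2i \le 2c^2$, with the stated rank and span guarantees.
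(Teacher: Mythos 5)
Your plan is essentially the same iterative normalization argument as the paper's proof: repeatedly single out a leading product $x_k y_k$ in one quadratic, subtract multiples of that quadratic from the others to remove the same monomial, restrict to a subspace killing the remaining $x_k$- and $y_k$-linear cross terms, and recurse — paying $O(c)$ codimension per step for a total of $O(c^2)$, which also controls the rank loss. One caution on the bookkeeping: the span-preserving operation is subtracting a multiple of $q_1$ (as the paper does), not of the bare monomial $x_1 y_1$ or of individual $x_1\ell$ terms — those live outside the span and the discrepancy is handled by restriction, not subtraction — and a codimension-$k$ restriction drops $\rank_2$ by at most $k$, not $2k$; neither slip changes the outcome, and your sketch otherwise mirrors the paper.
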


\begin{proof}
We prove the lemma by iteratively changing each $q_i$ to a
`disjoint' form. We shall give the proof over $\F_2$ but almost
the same proof holds for odd characteristics as well. We start
with $q_1$. Assume w.l.o.g. that $x_1\cdot x_2$ appears in $q_1$.
Now, from every other $q_i$ subtract an appropriate multiple of
$q_1$ such that at the end $x_1\cdot x_2$ only appears in $q_1$.
For simplicity we call the new polynomial $q_i$ as well. Now, for
$2\leq i$ and $j \in \{1,2\}$ let $x_j\cdot \ell_{i,j}$ be the
degree two monomials involving $x_j$ in $q_i$. For $q_1$ let $x_j
\cdot \ell_{1,j}$ be the degree $2$ monomials involving $x_j$ in
$q_1 - x_1\cdot x_2$. Let $V_1 = \{x \mid \ell_{1,1}(x) = \ldots =
\ell_{2,c}(x) = 0\}$. Notice that none of the $\ell_{i,j}$-s
contain $x_1$ or $x_2$. After restricting the polynomials to $V_1$
we have that $x_1\cdot x_2$ appears in $q_1$ and every other
appearance of either $x_1$ or $x_2$ is in degree one monomials. We
now move to (the 'new') $q_2$ and continue this process. At the
end we obtain a subspace $V$ and quadratics $q'_1,\ldots,q'_{c'}$
($c'$ may be smaller than $c$ if some polynomials vanished in the
process). As at each step we set at most $2c$ linear functions to
zero, for a total of at mots $2c^2$ linear functions, the claims
about the dimension of $V$ and the rank of every linear
combination of the $q_i$-s follow. It is clear that the $q_i|_v$-s
span the $q'_i$-s and so the lemma is proved.

When dealing with odd characteristics instead of looking for
$x_1\cdot x_2$ we search for $x_1^2$. By applying an invertible
linear transformation such a monomial always exists and we
continue with the same argument.
\end{proof}

The usefulness of the definition is demonstrated in the following
lemma.

\begin{lemma}\label{lem:representing by disjoint}
Let $q_1,\ldots,q_c$ be disjoint quadratic polynomials. Assume
that $\deg(f)=2d$ and $f=F(q_1,\ldots,q_c)$ for some function
$F(z_1,\ldots,z_c)$. Then as a polynomial over $\F$, $\deg(F)\leq
d$.
\end{lemma}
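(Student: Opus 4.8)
The plan is to reduce to a pure counting/dimension argument over $\F$. Since $q_1,\ldots,q_c$ are disjoint, after applying the linear transformation $T$ we may assume $q_k = x_k y_k + Q_k'$, where the $Q_k'$ involve only a disjoint set of ``auxiliary'' variables $z_1,\ldots,z_s$ none of which is among the $x_i,y_i$. The key structural observation is that the $x_i$ and $y_i$ are genuinely free variables that appear only in the single monomial $x_k y_k$ of $q_k$ (for the case $x_i = y_i$, think of $x_i^2$ in odd characteristic). First I would write $F$ as a polynomial $F(z_1,\ldots,z_c)$ of degree $e$ (its degree as a polynomial over $\F$), and assume for contradiction that $e > d$. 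I would pick a monomial $\prod_{k \in S} z_k^{a_k}$ of $F$ of degree exactly $e = \sum_{k\in S} a_k$ that survives (is nonzero) when we substitute $z_k = q_k$, and track which monomial of maximal degree it contributes to $f = F(q_1,\ldots,q_c)$.

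The main step is to show that substituting $z_k \mapsto x_k y_k + Q_k'$ into a degree-$e$ monomial of $F$ produces a monomial in the $x,y$ variables of degree $2e > 2d = \deg(f)$, contradicting $\deg(f) = 2d$. Concretely, expanding $\prod_{k\in S}(x_k y_k + Q_k')^{a_k}$, one of the terms is $\prod_{k\in S}(x_k y_k)^{a_k}$, which has degree $2\sum a_k = 2e$ and involves only the $x_k,y_k$ variables. Because the variable sets $\{x_k,y_k\}$ across different $k$, together with the auxiliary set $\{z_j\}$, are pairwise disjoint, no monomial coming from a \emph{different} monomial $\prod z_k^{b_k}$ of $F$ (with $(b_k) \neq (a_k)$) can cancel $\prod (x_k y_k)^{a_k}$: any such competing monomial either has different multidegree in some $x_k y_k$ block, or genuinely involves an auxiliary variable. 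Hence the coefficient of $\prod_{k\in S}(x_k y_k)^{a_k}$ in $f \circ T$ is exactly the coefficient of $\prod z_k^{a_k}$ in $F$, which is nonzero. So $\deg(f\circ T) \ge 2e > 2d$, but linear substitution cannot raise degree, so $\deg(f \circ T) \le \deg(f) = 2d$, a contradiction. Therefore $e \le d$.

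The one subtlety to handle carefully is the possibility $x_k = y_k$ (the ``square'' case): then the monomial $x_k y_k$ is $x_k^2$, and $(x_k^2)^{a_k} = x_k^{2a_k}$ has degree $2a_k$, so the degree bookkeeping is unchanged, but over a field of characteristic $p$ one must make sure that $2a_k < p$ is not forced — this is fine since we only need the degree of the \emph{formal} polynomial $f\circ T$, and the statement is about degree as a polynomial over $\F$; alternatively one reduces mod $x_i^p - x_i$ at the end without affecting the top-degree monomial in a fresh variable as long as no exponent reaches $p$, which can be arranged by first regularizing. I expect this square case, and the precise argument that no cancellation occurs across distinct monomials of $F$ (which is where disjointness is essential), to be the only place requiring care; the rest is a direct degree count.
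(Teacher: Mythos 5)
Your proof follows essentially the same approach as the paper's: pick a maximal-degree monomial $\prod z_k^{a_k}$ of $F$, observe that after substituting $z_k \mapsto q_k$ the monomial $\prod(x_k y_k)^{a_k}$ of degree $2\deg(F)$ survives because disjointness prevents cancellation, and conclude $2\deg(F)\le 2d$. One small imprecision to flag: Definition~\ref{def:disjoint polynomials} only forbids \emph{degree-two} monomials of $Q_k'$ from containing a variable in $\{x_i\}\cup\{y_i\}$ --- the $Q_k'$ may still have linear terms in the $x_i,y_i$ --- so you cannot ``assume $Q_k'$ involves only auxiliary variables.'' This does not break the argument, but the no-cancellation step should be phrased via a degree count: to produce a monomial of total degree $2e$ from $\prod q_j^{b_j}$ with $\sum b_j\le e$, every factor in the expansion must contribute a degree-two monomial, and by disjointness the only degree-two monomials of any $q_j$ that touch the $x,y$-variables are the terms $x_j y_j$; hence the only degree-$2e$ monomial supported purely on $x,y$-variables arising from $\prod q_j^{b_j}$ is $\prod(x_j y_j)^{b_j}$, which equals $\prod(x_k y_k)^{a_k}$ iff $(b_k)=(a_k)$.
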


\begin{proof}
We shall give the proof over $\F_2$ but it is again similar over
odd characteristic fields. Let $z_{1}^{e_1} \cdots z_{c}^{e_c}$ be
a monomial of maximal degree in $F$. When composing it with
$q_1,\ldots,q_c$ we get that $q_{1}^{e_1} \cdots q_{c}^{e_c}$
contains the monomial $\prod_{i=1}^{c} (x_i \cdot y_i)^{e_i}$. As
$z_{1}^{e_1} \cdots z_{c}^{e_c}$ is of maximal degree and each
$x_i$ and $y_i$ appear only as linear terms in all the $q_j$-s
(except the monomial $x_i\cdot y_i$ in $q_i$) we see that this
monomial cannot be cancelled by any other monomial created in
$F(q_1,\ldots,q_c)$. Therefore the monomial $\prod_{i=1}^{c} (x_i
\cdot y_i)^{e_i}$ belongs to $f$ as well. Since $\deg(f)=2d$ it
must be the case that $2e_1+\ldots+2e_c \leq 2d$. Hence,
$\deg(F)=\sum_{i=1}^{c}e_i \leq d$.
\end{proof}

We are now ready to complete the proof of
Theorem~\ref{thm:intro:deg-4:bias}.

\begin{proof}[Proof of Theorem~\ref{thm:intro:deg-4:bias}]

Combining Corollary~\ref{cor:representing tilde f},
Lemma~\ref{lem:representing by disjoint} and Lemma~\ref{lem:making
disjoint} we get that for the subspace $Z$ of
Corollary~\ref{cor:representing tilde f}, there exist a subspace
$Z' \subseteq Z$, of dimension $\dim(Z') \geq \dim(Z) -
\poly(|\F|/\delta)$, $b=\poly(|\F|/\delta)$ quadratic polynomials
$Q_1,\ldots,Q_b$ and a quadratic polynomial $H$ such that
$\tilde{f}|_{Z'} = H(Q_1,\ldots,Q_b)$. In other words
$\tilde{f}|_{Z'} = \sum_{i\leq j}\alpha_{i,j}Q_i Q_j + Q_0$.

As $f|_{Z'} = \tilde{f}|_{Z'}$ it follows that $f|_{Z'}
=\sum_{i\leq j}\alpha_{i,j}Q_i Q_j+Q_0$. Assume
w.l.o.g.\footnote{This is true up to an invertible linear
transformation and an affine shift and has no real effect on the
result, but rather simplifies the notations.} that $Z'$ is defined
as $Z' = \{x \mid x_1=\beta_1, \ldots,x_k=\beta_k\}$ for some
$k=\poly(|\F|/\delta)$. Then it is clear that we can write
$f=\sum_{i=1}^{k}x_i \cdot g_i +\sum_{i\leq j}\alpha_{i,j}Q_i Q_j
+ g_0$ for cubic polynomials $g_0,\ldots,g_k$.

\end{proof}

\section{Quartic polynomials with high $U^4$ norm}\label{sec:U4 norm}

In this section we prove Theorems~\ref{thm:intro:deg-4:gowers} and
\ref{thm:intro:deg-4:gowers:high char}. Intuitively, the notion of
$d+1$ Gowers norm indicates how close a given function is to a
degree $d$ polynomial. In fact, it was conjectured that if the
$U^{d+1}$ norm is bounded away from zero then the function has a
noticeable correlation with a degree $d$ polynomial. This
conjecture turned to be false even when the function is a degree
four polynomial and $d=3$ \cite{LMS08,GreenTao07}. Here we will
show that for this special case a weaker conclusion holds. Namely,
that for any degree four polynomial $f$ there exists a subspace of
dimension $n/\exp(\poly(1/\|f\|_{U^4}))$ on which $f|_V$ is equal
to some cubic polynomial. In fact an even stronger conclusion
holds - there exists a partition of (a subspace of small co
dimension of) $\F^n$ to such subspaces on which $f$ equals a
cubic. To ease the reading we restate
Theorem~\ref{thm:intro:deg-4:gowers} here.

\begin{theoremNoNum}[Theorem~\ref{thm:intro:deg-4:gowers}]
Let $\F$ be a finite field and $f \in \F[x_1,\ldots,x_n]$ a degree
four polynomial such that $\|f\|_{U^4}=\delta$. Then there exists
a partition of a subspace $V\subseteq \mathbb{F}^{n}$, of
dimension $\dim(V)=n-\poly(|\F|/\delta)$, to subspaces $\left\{
V_{\alpha}\right\}_{\alpha\in I}$, satisfying $\dim(V_\alpha)
=\Omega(n/|\F|^{\poly(1/\delta)})$, such that for every $\alpha\in
I$, $f|_{V_{\alpha}}$ is a cubic polynomial.
\end{theoremNoNum}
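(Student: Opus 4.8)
The plan is to lift the argument of Section~\ref{sec:degree 3} one degree higher and then finish with an isotropic-subspace argument.

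First, pass to a subspace on which every discrete derivative of $f$ has bounded cubic rank. By Lemma~\ref{lem:gowers norm of der} with $d=4$, a $\tfrac12\delta^{16}$-fraction of the directions $y$ satisfy $\|\Delta_y f\|_{U^3}\ge\tfrac12\delta^2$, and for each such $y$, Theorem~\ref{thm:intro:deg-3:gowers} gives $\rank_3(\Delta_y f)=O(\log^2(1/\delta))$. Since $\Delta_{y+z}f-\Delta_y f-\Delta_z f=\Delta_y\Delta_z f$ has degree $\le 2$ and $\rank_3$ is insensitive to degree-$\le 2$ summands, the map $y\mapsto\rank_3(\Delta_y f)$ is subadditive (as already noted in the proof of Lemma~\ref{lem:good subspace of low rank for quartics}); hence Lemma~\ref{lem:subadditive} yields a subspace $V\subseteq\F^n$ of codimension $\poly(|\F|/\delta)$ with $\rank_3(\Delta_y f)\le r_0:=\poly(1/\delta)$ for every $y\in V$.

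The technical heart is a cubic analogue of Lemma~\ref{lem:low rank space have few common functions}: there should exist $r=\poly(1/\delta)$ fixed quadratics $q_1,\dots,q_r$ and $R=\exp(\poly(1/\delta))$ fixed linear functions $\ell_1,\dots,\ell_R$ such that, possibly after shrinking $V$ by codimension $\poly(|\F|/\delta)$, every $y\in V$ admits a representation $\Delta_y f=\sum_{i=1}^{R}\ell_i\,q_i^{(y)}+\sum_{i=1}^{r}\ell_i^{(y)}\,q_i+q_0^{(y)}$, with $\ell_i^{(y)}$ linear and $q_i^{(y)},q_0^{(y)}$ quadratic. To get this, observe that the top-degree parts $(\Delta_y f)_3=\sum_k y_k\,\partial_k f_4$ depend linearly on $y$, so they form a linear space $M$ of cubics, each of cubic rank at most $r_0$ (the degree-$\le2$ remainder of $\Delta_y f$ can always be swept into $q_0^{(y)}$). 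Now imitate the proof of Lemma~\ref{lem:low rank space have few common functions}: take $g=\sum_{i\le\tilde r}\ell_i q_i+q_0\in M$ of maximal cubic rank, declare its linear factors and its quadratic factors global, restrict to $\{\ell_1=\dots=\ell_{\tilde r}=0\}$, and show — by decomposing the linear and quadratic factors of an arbitrary element of $M$ against the spans of the global $\ell_i$'s, the global $q_i$'s, and a complementary set, then using subadditivity of $\rank_3$ together with the identity $\rank_3(a(\bar x)+b(\bar y))=\rank_3(a)+\rank_3(b)$ for polynomials in disjoint variable sets — that the cubic rank of every element of $M$ drops by a constant factor after the restriction; then iterate. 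The quadratic factors accumulate geometrically to $r=O(r_0)=\poly(1/\delta)$, but the linear functions do not: unlike the essentially canonical ``support'' of a quadratic, an $\ell\cdot q$ decomposition of a cubic is far from unique, and lifting the structure back through each of the $O(r_0)$ restrictions spawns fresh linear functions, pushing the total to $R=\exp(\poly(1/\delta))$. I expect this step to be the main obstacle; the rest is bookkeeping.

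Finally, build the partition. Let $B_1,\dots,B_r$ be the bilinear forms of $q_1,\dots,q_r$, set $V'=V\cap\bigcap_{i=1}^{R}\ker\ell_i$, and pick $\vec U\subseteq V'$ totally isotropic for every $B_i$ by the standard nested construction (a maximal totally isotropic subspace for $B_1$ inside $V'$, then one for $B_2$ inside that, and so on); after $r$ steps $\dim\vec U\ge 2^{-O(r)}(n-T)=\Omega(n/|\F|^{\poly(1/\delta)})$, where $T=\exp(\poly(|\F|/\delta))$ bounds the total additive codimension incurred so far and $n$ is assumed large enough that $T\ll n$. For each $v\in V$ put $V_\alpha:=\vec U+v$. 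Since $\vec U\subseteq\ker\ell_i$, each $\ell_i$ is constant on $V_\alpha$; since $\vec U$ is totally isotropic for $B_i$, each $q_i$ is affine-linear on $V_\alpha$ (all of its directional derivatives along $\vec U$ are constants). Plugging $y\in\vec U$ into the representation above and restricting to $V_\alpha$, the first sum becomes a constant combination of quadratics, the second a sum of products (linear)$\cdot$(affine-linear), and $q_0^{(y)}$ is already quadratic, so $\Delta_y(f|_{V_\alpha})=(\Delta_y f)|_{V_\alpha}$ has degree $\le 2$ for every direction $y\in\vec U$. By the fact recorded in Section~\ref{sec:prelim} that a function all of whose discrete derivatives have degree $\le 2$ is a cubic polynomial, $f|_{V_\alpha}$ is cubic. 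The cosets $\{\vec U+v\}$ partition $V$, with $\dim V=n-\poly(|\F|/\delta)$ and $\dim V_\alpha=\Omega(n/|\F|^{\poly(1/\delta)})$, which is the claim.
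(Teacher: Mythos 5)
Your opening reduction is right and matches the paper: use Lemma~\ref{lem:gowers norm of der} together with Theorem~\ref{thm:intro:deg-3:gowers} to see that many derivatives have small $\rank_3$, note that $y\mapsto\rank_3(\Delta_y f)$ is subadditive because $\Delta_{y+z}f-\Delta_yf-\Delta_zf$ has degree $\le 2$, and then invoke Lemma~\ref{lem:subadditive} to find a subspace $V$ of small codimension on which every derivative has $\rank_3\le r_0=O(\log^2(1/\delta))$. Your final step is also correct, and is in fact a cleaner way to package what the paper does: rather than peeling off half the variables of each $Q_i$ one quadratic at a time via Theorem~\ref{thm:Dickson} (as the paper does, giving cells $V_{\alpha,\beta^1,\ldots,\beta^r}$ of varying shape), you pass once to a single subspace $\vec U$ in the common kernel of the $\ell_i$'s that is totally isotropic for all $r$ bilinear forms, so that on every coset $\vec U+v$ each $\ell_i$ is constant and each $q_i$ is affine; the partition into cosets of $\vec U$ then immediately makes every derivative on a cell quadratic, hence $f$ cubic on each cell. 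The dimension count $n/2^{O(r)}$ is the same. (For odd characteristic you should note that total isotropy for the symmetric bilinear form is exactly what makes the pure-quadratic part vanish, and appeal to the Witt index bound $\ge n/2-O(1)$ over a finite field.)

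The genuine gap is precisely where you flag it: the ``basis'' lemma, which in the paper is Lemma~\ref{lem:low rank few com fun deg 4} together with the machinery of $\rank_3^c$ (Lemma~\ref{lem:low rank -> low rank 3c}) and $\dim_3^c$ (Lemma~\ref{lem:low dim c implie lower dim}). Your sketch tries to lift the quadratic argument of Lemma~\ref{lem:low rank space have few common functions} verbatim: pick a $g$ of maximal $\rank_3$, declare its linear and quadratic factors global, restrict to the subspace where the linear factors vanish, and argue the rank of the rest of $M$ drops. Two things break. First, restricting to $\{\ell_1=\cdots=\ell_{\tilde r}=0\}$ does nothing to the quadratics $q_i$ of $g$ and does not by itself reduce the cubic rank of other elements of $M$; the paper avoids restriction here entirely and instead works \emph{modulo} a growing set $A$ of linear and quadratic functions, using the relaxed rank $\rank_3^c$ that tolerates a controlled number of $\ell\ell'\ell''$ terms, and carries out a careful term-by-term comparison in Equation~\eqref{eq:g+h-f} to derive a contradiction. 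Second, your argument leans on the identity $\rank_3(a(\bar x)+b(\bar y))=\rank_3(a)+\rank_3(b)$ for disjoint variable sets. For quadratics the analogous identity is immediate from the canonical form of Theorem~\ref{thm:Dickson} (it is literally block-diagonal matrix rank), but for cubics no such canonical form exists, the decomposition $\sum\ell_iq_i$ is highly non-unique, and additivity of $\rank_3$ under direct sums is not established (and is quite likely false in general). The paper sidesteps this by never needing such an identity: its $\rank_3^c$ bookkeeping keeps everything in a single ambient space. So while your architecture (reduce to low rank derivatives, extract a common basis of $r$ quadratics and $R$ linears, then build the partition) is the correct one, the middle step is not a proof, and the plan you sketch for it would need to be replaced by something substantially different — essentially the paper's modulo-$A$ argument with $\rank_3^c$ and $\dim_3^c$.
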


In other words, the theorem says that for $r=\poly(1/\delta)$ any
such $f$ (possibly after a change of basis of $\F^n$) can be
written as $f = \sum_{i=1}^{r}x_{n-r+i}g_i(x_1,\ldots,x_n) +
f'(x_1,\ldots,x_{n-r})+g_0$, where the $g_i$-s are degree three
polynomials and $f'$ is a polynomial for which there exists a
partition of $\mathbb{F}^{n-r}$ to subspaces $\left\{
V_{\alpha}\right\} _{\alpha\in I}$, satisfying $\dim(V_\alpha)
=\Omega(n/\exp(\poly(1/\delta)))$, such that for every $\alpha\in
I$, $f'|_{V_{\alpha}}$ is a cubic polynomial.

As in the proof of Theorem~\ref{thm:intro:deg-3:gowers} we start
by passing to a subspace of a constant codimension on which every
derivative has low rank, i.e
$\Delta_y(f)=\sum_{i=1}^{r}\ell_{i}Q_{i}+Q_{0}$. Then we shall
deduce that there is some common `basis' $\left\{ \ell_{i}\right\}
_{i=1}^{t_{2}},\left\{ Q_{i}\right\} _{i=1}^{t_{1}}$ to all the
derivatives. Namely, every derivative $\Delta_{y}(F)$ can be
expressed as
$\sum_{i=1}^{t_{1}}\ell_{i}^{y}Q_{i}+\sum_{i=1}^{t_{2}}\ell_{i}Q_{i}^{y}+Q_0^{y}$
(where $y$ in the exponent means that the polynomial may depend on
$y$). This is the main technical difficulty of the proof and it is
based on an extension of Lemma~\ref{lem:low rank space have few
common functions} to the case of low rank cubic polynomials. Then,
we conclude that for every setting $\alpha$ of $\left\{
\ell_{i}\right\} _{i=1}^{t_{2}},\left\{ Q_{i}\right\}
_{i=1}^{t_{1}}$ we obtain a subspace $V_{\alpha}$ on which all the
derivative are quadratic polynomials, i.e $f|_{V_{\alpha}}$ is
cubic.

\subsection{The case of the symmetric polynomial}
\label{sec:S_4}

Let $S_k(x_1,\ldots,x_n) = \sum_{1\leq i_1<\ldots<i_k \leq
n}x_{i_1}\cdot x_{i_2}\cdots x_{i_k}$. In \cite{GreenTao07,LMS08}
it was shown that over $\F_2$, it holds that $\|S_4\|_{U^4} \geq
\delta$, for some absolute constant $0<\delta$, but for every
degree three polynomial $g$, $\Pr[S_4 = g]\leq 1/2 + \exp(-n)$. To
make the claim of Theorem~\ref{thm:intro:deg-4:gowers} clearer we
shall work out the case of $S_4$ as an example.

Consider a partial derivative $\Delta_y(S_4)$. For simplicity
assume that $n=4m$. Computing we get that
\begin{equation}\label{eq:der of S4} \Delta_y(S_4) =
S_2 \cdot \sum_{i\neq j}^{n} x_i y_j + S_1 \cdot \sum_{i\neq
j}^{n} x_i y_j + \sum_{i\neq j}^{n} x_i y_j \;.
\end{equation}
In particular, $S_2$ is a
`basis' for the set of partial derivatives of $S_4$. Continuing,
we have that
\begin{equation}\label{eq:S2} S_2(x_1,\ldots,x_n)=
\sum_{k=1}^{2m} \left(\sum_{i=1}^{2k-1} x_i \right)\cdot
\left(x_{2k}+ \sum_{i=1}^{2k-2} x_i \right) +
\sum_{i=1}^{m}(x_{4i-3} + x_{4i-2})\;.
\end{equation}
For $k=1,\ldots,2m$ let $\ell_k = \sum_{i=1}^{2k-1} x_i$. Notice
that fixing $\ell_1,\ldots,\ell_{2m}$ reduces the degree of $S_2$
to one and so every partial derivative of $S_4$ will have degree
two. For example, consider the space $V_0=\{x\mid
\ell_1(x)=\ldots=\ell_{2m}(x)=0\}$. Rewriting we get $V_0 =
\{(0,y_1,y_1,y_2,y_2,\ldots,y_{2m-1}y_{2m-1},y_{2m})\}$. Computing
we get that
$$S_4|_{V_0} = S_2(y_1,\ldots,y_{2m-1})\;.$$
A closer inspection shows that no matter how we set
$\ell_1,\ldots,\ell_{2m}$ we will get that the degree of $S_4$
becomes two.

\subsection{Finding a `basis' for a space of low rank cubic polynomials}

In this section we prove the main technical result showing that a
subspace of degree 3 polynomials with low rank has a small
`basis'.

\begin{lemma}[Main Lemma]
\label{lem:low rank few com fun deg 4}Let $M$ be a vector space of
cubic polynomials satisfying $\rank_{3}(f)\leq r$ for all $f\in
M$. Then there exists a set of linear and quadratic functions
$\left\{ Q_{i}\right\} _{i=1}^{t_{1}}\cup\left\{ \ell_{i}\right\}
_{i=1}^{t_{2}}$, for $t_{1}\leq r$ and $t_{2}=2^{O(r)}$, such that
every $f\in M$ can be represented as
$f=\sum_{i=1}^{t_{1}}\ell_{i}^{f}Q_{i}+\sum_{i=1}^{t_{2}}\ell_{i}Q_{i}^{f}+Q_{0}^{f}$
for some linear and quadratic functions $\{ \ell_{i}^{f}\}
_{i=1}^{t_{1}}\cup\{ Q_{i}^{f}\} _{i=0}^{t_{2}}$.
\end{lemma}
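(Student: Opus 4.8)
The plan is to mimic the structure of the proof of Lemma~\ref{lem:low rank space have few common functions}, but now working one level up: there the ``atoms'' were linear functions (and quadratics decomposed as sums of products of pairs of them), whereas here the atoms are linear \emph{and} quadratic functions, and a low-rank cubic decomposes as $\sum \ell_i q_i + q_0$. The key insight to exploit is that $\rank_3(g+h)$ can be much smaller than $\rank_3(g)+\rank_3(h)$ only if, in some basis, $g$ and $h$ share many of the linear/quadratic functions appearing in their canonical low-rank representations. So I would pick $g\in M$ with $\rank_3(g)=r$ maximal, write $g=\sum_{i=1}^r \ell_i q_i + q_0$, and then try to argue that after setting the $\ell_i$'s \emph{and} enough additional linear functions (to ``kill'' the quadratics $q_i$ that appear — each $q_i$ of rank $\le$ something contributes $O(1)$ more linear functions, or rather $O(r)$ over all of them, but here is where the geometry gets delicate) to zero, the rank of every $h\in M$ on the resulting subspace drops by a constant factor. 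Iterating $O(\log r)$ times drives all ranks to zero, at which point every $f\in M$ is a sum of products of the collected linear functions with $y$-dependent functions.

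The main subtlety — and the reason the bound is $t_2 = 2^{O(r)}$ rather than $\mathrm{poly}(r)$ — is that, unlike in the quadratic case, fixing the linear functions $\ell_1,\dots,\ell_r$ in $g$'s representation does not immediately reduce $g$ to lower ``complexity'': the quadratics $q_i$ remain, and to make $g$ itself (and the interacting parts of every $h$) genuinely simpler we must also control these quadratics, each of which lives in an $n$-dimensional space and, when restricted, may not cooperate. I would therefore proceed in two nested layers. \textbf{Outer layer:} maintain a ``rank decrease by a factor of two'' invariant on $M$, as in Lemma~\ref{lem:low rank space have few common functions}; each outer step peels off the $\le r$ linear functions $\ell_i$ of the max-rank element together with whatever linear functions are needed to simplify the $q_i$'s. \textbf{Inner layer:} to handle the quadratics $q_1,\dots,q_r$ appearing in $g$'s representation, invoke Lemma~\ref{lem:regularizing a set of quadratics} (or an analog) to pass to a subspace on which they are spanned by $\le r$ of themselves with every nontrivial combination of high rank, and then use Lemma~\ref{lem:low rank space have few common functions} on that small space of quadratics to express them via $O(r)$ shared linear functions; these shared linear functions are the ones we collect into the $\{\ell_i\}_{i=1}^{t_2}$ family. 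Because each of the $O(\log r)$ outer rounds can multiply the number of these auxiliary linear functions by a constant (or, in the worst bookkeeping, double it), the total is $2^{O(r)}$; and we keep a separate, short list $\{Q_i\}_{i=1}^{t_1}$ with $t_1 \le r$ of genuine quadratic ``basis'' elements that cannot be eliminated, namely (a basis of) the span of the $q_i$'s of the very first maximal-rank element.

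\textbf{The hard part} will be making the interaction argument precise: showing that once $g=\sum_{i=1}^{t_1}\ell_i^g Q_i + \sum \ell_i Q_i^g + Q_0^g$ has maximal rank with respect to the current basis, an arbitrary $h\in M$ has the property that $\rank_3(h)$ restricted to the subspace where all current $\ell_i$'s vanish is at most (roughly) $r/2$. This is the exact analog of the ``$r\ge 2s$'' computation in the proof of Lemma~\ref{lem:low rank space have few common functions}, but the algebra is heavier because $\rank_3$ is defined via both linear and quadratic functions, and one must argue that the ``new'' quadratic functions introduced when restricting $h$ are linearly independent (as elements of the appropriate quotient space) from those of $g$, so that $\rank_3(g+h) \ge \rank_3(h) - \rank_3(g|_{\text{shared part}})$ in the right way to force a factor-two drop. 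I would set this up by choosing matrix/bilinear-form representatives as in Lemma~\ref{lem:rank implies regular}'s proof, decomposing each linear function $L$ appearing in $h$'s representation into its component in $\mathrm{span}\{\ell_i\}$, its component in $\mathrm{span}\{$linear parts of the $Q_i^g$, $Q_i^h\}$, and a genuinely new component, exactly paralleling the ``$L_i=\tilde m_i + \tilde\ell_i + \tilde L_i$'' step — and similarly decompose the quadratic functions. Once the bookkeeping is organized this way, the rank inequalities used are the sub-additivity of $\rank_3$ (proved analogously to Lemma~\ref{lem: rank is subadditive}), the fact that adding a function depending on the fixed $\ell_i$'s does not increase rank after restriction, and the reverse inequality $\rank_3(g+\tilde h)\ge \rank_3(g)-\rank_3(\tilde h)$; combining these as in the cited proof closes the induction.
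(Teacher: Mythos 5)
Your plan correctly identifies the high-level analogy with Lemma~\ref{lem:low rank space have few common functions} (pick a maximal-rank $g$, ``subtract off'' its building blocks, argue the others simplify), but it diverges from what the paper actually does in ways that leave a real gap.

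First, the arithmetic in your outer layer is internally inconsistent. You propose rank halving over $O(\log r)$ rounds, with each round multiplying the number of auxiliary linear functions by a constant (or doubling it). That yields $C^{O(\log r)}=\mathrm{poly}(r)$ linear functions, not $2^{O(r)}$; yet you then assert the total is $2^{O(r)}$. The paper's actual iteration decreases the rank by exactly one per round over $r$ rounds, with a different parameter blowing up exponentially (see below), which is where the $2^{O(r)}$ really comes from. A factor-two drop in $\rank_3$ per round is also not proved by the paper and is not an easy claim: the ``$r\ge 2s$'' inequality in the quadratic case relies on Theorem~\ref{thm:Dickson}'s canonical form and on rank being additive across disjoint blocks of variables, and nothing of the sort is available for $\rank_3$.

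Second, and more importantly, your plan has no mechanism for controlling the residual terms that accumulate in the cubic case. The paper's main technical device is the refined notion $\rank_3^c$, which declares a cubic low-rank if it is $\sum_{i=1}^{r}\ell_i Q_i + \sum_{i=1}^{c}\ell_i^{(1)}\ell_i^{(2)}\ell_i^{(3)} + Q_0$, explicitly tolerating a bounded ``soup'' of triple products of linear functions. At each step of Lemma~\ref{lem:low rank -> low rank 3c}, one adds $r$ linear functions and \emph{one} quadratic to $A$ and decreases $\rank_3^{c'}$ by one, at the cost of $c' = 11c+3r+t_1$; this multiplicative growth of $c$ over $r$ rounds is the actual source of the $\exp(r)$. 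After $\rank_3^c$ is driven to $0$, every class still carries an $\exp(r)$-size triple-product residue, and a second, separate argument (tracking $\dim_3^c$, Lemma~\ref{lem:low dim c implie lower dim}) is needed to absorb these via $\exp(r)$ further linear functions. Your proposal does not introduce any analog of $\rank_3^c$, so when you mod out by the $\ell_i$'s you will be left with a growing collection of low-rank cubic remainders that your ``drive rank to zero'' loop never terminates on, or never accounts for in the final representation $f=\sum_{i=1}^{t_1}\ell_i^f Q_i + \sum_{i=1}^{t_2}\ell_i Q_i^f + Q_0^f$.

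Finally, a subtler mismatch: you keep ``a basis of the $q_i$'s of the very first maximal-rank element'' as the fixed quadratics $Q_1,\ldots,Q_{t_1}$. In the paper, a \emph{single} new quadratic ($Q_1^g$) is retained per round, drawn from the current round's maximal-rank element, and it is by construction of ``high residual rank'' modulo the previously collected set; this is what keeps $t_1\le r$ while still enabling the interaction argument (the derivative-by-$\ell_1^g$ step in Equation~\eqref{eq:Q1g gives a contradiction}). Using only the first element's quadratics does not supply the needed regularity for later rounds. Your sketch of the ``hard part'' (decomposing each linear function into components and arguing independence) is in the right spirit, but without the $\rank_3^c$ machinery to isolate the low-rank leftovers, the independence/rank-additivity inequalities you want to invoke do not hold cleanly.
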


The rest of this section is devoted to proving this lemma.
Similarly to the proof of Lemma~\ref{lem:low rank space have few
common functions} we will work modulo a collection of linear and
quadratic polynomials. For this we shall need the following
definition.

\begin{definition}
For a cubic polynomial $f$ we say that $\rank_3^c(f)=r$ if $r$ is
the minimal integer such that $f$ can be written as
\begin{equation}\label{eq:def of rank 3 c}
f=\sum_{i=1}^{r}\ell_{i}Q_{i}+\sum_{i=1}^{c}\ell_{i}^{(1)}\ell_{i}^{(2)}\ell_{i}^{(3)}+Q_{0}\;,
\end{equation}
where the $\ell$-s are linear functions and the $Q$-s are
quadratics.
\end{definition}

To see that difference from the previous notion of $\rank_3$
(Definition~\ref{def:cubic rank}) we observe that if $f$ is a
degree three polynomial with $\rank_{3}(f)=r$ then
$f=\sum_{i=1}^{r}\ell_iQ_i + Q_0$. If we also know that some
nontrivial linear combination of $Q_1,\ldots,Q_r$ has rank (as a
quadratic polynomial) less than $c$ then $\rank_{3}^c(f)<r$. I.e.
$\rank_{3}^{c}(f)$ ignores, in some sense,  low rank quadratic
functions in the representation of $f$.

\begin{definition}
Let $A=\left\{ Q_{i}\right\} _{i=1}^{t_{1}}\cup\left\{
\ell_{i}\right\} _{i=1}^{t_{2}}$ be a set of linear and quadratic
functions and let $f:\mathbb{F}^{n}\rightarrow\mathbb{F}$ be a
degree three polynomial. Denote \[ [f]_{A}\triangleq\left\{
f+\sum_{i=1}^{t_{1}}\ell_{i}'Q_{i}+\sum_{i=1}^{t_{2}}\ell_{i}Q_{i}'+Q_{0}'\mid
\;\mathrm{for \; linear \; and \; quadratic\; functions}\; \left\{
\ell_{i}'\right\} _{i=1}^{t_{1}},\; \left\{ Q_{i}'\right\}
_{i=0}^{t_{2}}\right\}
\]  For a linear space
$M$ of degree three functions, we define the subspace $[M]_{A}$ to
be
\[
[M]_{A}\triangleq\left\{ [f]_{A}\mid f\in M\right\} \;.\] As
before we define $\rank_{3}^{c}([f]_{A})$ to be the lowest rank of
functions in $[f]_{A}$.
\[
\rank_{3}^{c}([f]_{A})\triangleq\min\left\{ \rank_{3}^{c}(g)\mid
g\in[f]_{A}\right\} \;.\]
\end{definition}

The definition of $[f]_A$ resembles, in some sense, the notion of
working modulo an ideal. However, we note that as opposed to the
usual definition, where for every $f$, $\left\{ Q_{i}'\right\}
_{i=1}^{t_{1}}\cup\left\{ \ell_{i}'\right\} _{i=1}^{t_{2}}$ can be
arbitrary functions, in our definition they are restricted to
being quadratic and linear functions, respectively.

We are now ready to prove the main lemma of this section that
shows the existence of a small `basis' for any linear space of
cubic polynomials of low rank.

\begin{lemma}
\label{lem:low rank -> low rank 3c}Let $A=\left\{ Q_{i}\right\}
_{i=1}^{t_{1}}\cup\left\{ \ell_{i}\right\} _{i=1}^{t_{2}}$ be a
set of linear and quadratic polynomials. Let M be a linear space
of cubic polynomials such that for every $[f]_{A}\in[M]_{A}$,
$\rank_{3}^{c}([f]_{A})\leq r$. Then, there are $r$ linear
functions $\left\{ \ell_{i}\right\} _{i=1}^{r}$ and a quadratic
polynomial $Q$ such that for $A'\triangleq A\cup\left\{
\ell_{i}\right\} _{i=1}^{r}\cup\left\{ Q\right\}$ it holds that
every $[f]_{A'}\in[M]_{A'}$ satisfies
$\rank^{c'}_{3}([f]_{A'})\leq r-1$, for $c'=11c+3r+t_{1}$.
\end{lemma}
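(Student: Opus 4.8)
The strategy mirrors the proof of Lemma~\ref{lem:low rank space have few common functions}, but one level up: there we took a maximal-rank quadratic, set its defining linear functions to zero, and argued every other quadratic lost half its rank; here we take a maximal-$\rank_3^c$ cubic, extract from it a ``witness'' consisting of $r$ linear functions $\{\ell_i\}_{i=1}^r$ and one quadratic $Q$, adjoin them to $A$, and argue that every $[g]_{A'}$ drops in $\rank_3^c$ by at least one. First I would pick $f\in M$ attaining $\rank_3^c([f]_A)=r$ (if $r=0$ there is nothing to do), and fix a representative $g_f\in[f]_A$ realizing the minimum, so that $g_f=\sum_{i=1}^r \ell_i Q_i+\sum_{i=1}^c \ell_i^{(1)}\ell_i^{(2)}\ell_i^{(3)}+Q_0$ with the $\{\ell_i\}_{i=1}^r$ linearly independent (independence can be assumed, else the expression collapses and $r$ wasn't minimal). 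These $\{\ell_i\}_{i=1}^r$ together with $Q\eqdef Q_1$ will be the functions we adjoin; set $A'=A\cup\{\ell_i\}_{i=1}^r\cup\{Q\}$.

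Now take an arbitrary $h\in M$ and a representative $g_h\in[h]_{A'}$; I want to show $\rank_3^c(g_h)\le r-1$. The key is to examine $f+h$: since $[f+h]_A\in[M]_A$ we have $\rank_3^c([f+h]_A)\le r$ too, so there is a representative of $f+h$ that is a combination of $r$ linear-times-quadratic terms plus $c$ products of three linears plus a quadratic. Writing this out against $g_f$, the point is that the linear functions $\{\ell_i\}_{i=1}^r$ coming from $g_f$ must ``mostly'' appear in the $f+h$ representative as well — otherwise $h$ itself would be expressible with fewer than $r$ linear-quadratic products modulo $A'$. This is the analogue of the ``shared linear functions'' phenomenon in the quadratic case, but now one has to track both the linear functions and a quadratic $Q$, and the three-fold products $\ell^{(1)}\ell^{(2)}\ell^{(3)}$ that get absorbed into the $\rank_3^c$ slack generate, upon restriction/rearrangement, further linear and quadratic debris. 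Carefully bookkeeping this debris — each product of three linears contributes a bounded number of linear functions and quadratics when one of its factors is pinned down, and cross terms between the $f$-representative and the $(f+h)$-representative contribute $O(r)$ and $O(c)$ more — is exactly where the constant $c'=11c+3r+t_1$ comes from. The $t_1$ term records the quadratics already in $A$ that may interact with the new quadratic $Q$; the $11c$ and $3r$ are the amortized costs of re-expanding the triple products and the linear-quadratic products on both sides.

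I expect the main obstacle to be the combinatorial accounting in this last step: making precise the sense in which ``$\{\ell_i\}$ must be shared'', and then showing that once they and $Q$ are quotiented out, the residual cubic $g_h$ genuinely needs only $r-1$ linear-quadratic products over the enlarged modulus, with the triple-product count controlled by $c'$. Concretely I would argue as follows: suppose for contradiction that $\rank_3^{c'}([h]_{A'})=r$ for some $h$; combine the minimal representatives of $f$, of $h$, and of $f+h$ modulo $A'$, use linearity to cancel, and derive that some nontrivial combination yields a cubic of $\rank_3^{c}$ strictly less than its ``expected'' value — the same style of rank-subadditivity squeeze ($\rank_3^c(g+\tilde h)\ge \rank_3^c(g)-\rank_3^c(\tilde h)$, with the analogue of the footnote that direct-sum-like decompositions add ranks) used in Lemma~\ref{lem:low rank space have few common functions} — contradicting minimality of $r$ for $[f]_A$. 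Iterating this lemma $r$ times (each application reducing $r$ by one at the cost of blowing $c$ up by a bounded multiplicative-plus-additive amount) will, in the next subsection, yield the full Main Lemma~\ref{lem:low rank few com fun deg 4} with $t_1\le r$ and $t_2=2^{O(r)}$, the exponential coming precisely from the $c'\approx 11c+O(r)$ recursion unrolled $r$ times.
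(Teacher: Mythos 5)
Your high-level strategy matches the paper's: select a cubic of maximal rank modulo $A$, adjoin the $r$ linear factors from its linear-quadratic products together with one accompanying quadratic, take an alleged recalcitrant $h\in M$, and derive a contradiction from the identity $g+h-f\in[0]_A$. That is the correct shape of the argument, and your observation that iterating the lemma $r$ times drives the exponential blow-up in $c$ is also right.

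However, the pivot polynomial must be chosen maximal with respect to $\rank_3^{c'}$, not $\rank_3^c$. Your plan takes $g$ with $\rank_3^c([g]_A)=r$. In one branch of the eventual case split the argument produces $\rank_3^{c'-c}([g]_A)\le r-1$. Since $\rank_3^{c_1}\ge\rank_3^{c_2}$ whenever $c_1\le c_2$, this only contradicts a hypothesis about $\rank_3^{c'}$ (because $c'-c<c'$); it does \emph{not} contradict $\rank_3^{c}([g]_A)=r$, because $c'-c = 10c+3r+t_1 > c$, so the weaker rank measure is permitted to be smaller. The paper starts from $\rank_3^{c'}([g]_A)=r$ (which also forces $\rank_3^{c}([g]_A)=r$ by monotonicity) precisely so that this branch closes. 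With your choice the contradiction evaporates in exactly the case you would most need it.

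Beyond that, the step you flag as ``combinatorial accounting'' is not a routine subadditivity squeeze analogous to Lemma~\ref{lem:low rank space have few common functions}; it has a specific mechanism your sketch omits. After justifying that $\{\ell_i\}_{i=1}^{t_2}\cup\{\ell_i^g\}\cup\{\ell_i^h\}$ can be taken linearly independent and normalizing them to coordinates, one writes the explicit identity $g+h-f=\sum\ell_i'Q_i+\sum\ell_iQ_i'+Q_0'$ and extracts the coefficient of the single variable $x_1=\ell_1^g$. That coefficient is a quadratic which must vanish identically; unpacking it shows $Q_1^g$ agrees, modulo $A'$ and the $Q_i^f$'s, with a quadratic of rank at most $3r+9c+t_1=c'-2c$. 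The case split on whether the $Q_i^f$-coefficients are all zero then either lowers $\rank_3^{c'-c}([g+h]_{A'})$ (contradicting a preliminary fact you also need but don't state: $\rank_3^{c'-c}([g+h]_{A'})=r$), or bounds $\rank_2([Q_1^g]_A)\le c'-2c$ (contradicting the choice of $g$, via the point above). It is this coefficient-of-one-linear-function extraction — not an abstract rank-subadditivity inequality for $\rank_3^c$ — that produces the constant $c'=11c+3r+t_1$ and makes the induction go through.
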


In other words, the lemma says that we can find a small set of
linear functions and one quadratic polynomial such that by adding
them to $A$ and increasing $c$ by a constant factor, we can
decrease the $\rank_3^{c'}$ of every polynomial in $[M]_{A'}$.

\begin{proof}
Assume that there is some $[g]_{A}\in[M]_{A}$ such that
$\rank_{3}^{c'}(g)=\rank_{3}^{c'}([g]_{A})=r$. If no such $g$
exists then for every $[f]_{A}\in[M]_{A}$,
$\rank^{c'}_{3}([f]_{A})\leq r-1$ and there is nothing to prove.
As $c<c'$ it also holds that $\rank_{3}^{c}([g]_{A})=r$. Hence,
$g$ can be represented as $\sum_{i=1}^{r}\ell_{i}^{g}Q_{i}^{g}+
\sum_{i=1}^{c}\ell_{i}^{g,(1)}\ell_{i}^{g,(2)}\ell_{i}^{g,(3)}$.\footnote{By
definition of $[g]_A$ we can add any quadratic polynomial to $g$
so we can assume that there is no extra $Q_0^g$ term in the
representation of $g$.} Note that $\rank_2([Q_1^g]_A)
> c'-c$ as otherwise we
could replace $Q_1^g$ with a function of the form
$\sum_{i=1}^{t_1}\alpha_i Q_i + \sum_{i=1}^{t_2}\ell_i \ell'_i +
\sum_{j=1}^{c'-c}m_im'_i$, where the $m$-s are linear functions,
and get that $\rank_3^{c'}([g]_{A})\leq r-1$.

Set $A' \eqdef A\cup\left\{ \ell_{i}^{g}\right\}
_{i=1}^{r}\cup\left\{ Q_{1}^{g}\right\}$. Assume for
contradiction that there is some $h \in M$ satisfying
$\rank^{c'}_{3}([h]_{A'})=r$. This implies that
$\rank^{c}_3([h]_{A})=r$ and that $\rank^{c'-c}_{3}([h+g]_{A'})=r$
as well. Indeed, if the latter equation was not true then by
expressing $h+g$ as a low $\rank_3^{c'-c}$ polynomial and moving
$g$ to the other side we would get that $\rank^{c'}_{3}([h]_{A'})<
r$ in contradiction (recall that $\{\ell_i^g\}\subset A'$). From
this we get that $\rank^{c}_{3}([h+g]_{A})=r$ as well. Let
$f\in[h+g]_{A}$ be such that $\rank^{c}_{3}(f)=r$. Express $h$ and
$f$ as
$h=\sum_{i=1}^{r}\ell_{i}^{h}Q_{i}^{h}+\sum_{i=1}^{c}\ell_{i}^{h,(1)}\ell_{i}^{h,(2)}\ell_{i}^{h,(3)}$
and
$f=\sum_{i=1}^{r}\ell_{i}^{f}Q_{i}^{f}+\sum_{i=1}^{c}\ell_{i}^{f,(1)}\ell_{i}^{f,(2)}\ell_{i}^{f,(3)}$.
Note that we can assume that w.l.o.g $\left\{ \ell_{i}\right\}
_{i=1}^{t_{2}}$ are linearly independent as otherwise we can just
replace them with a linearly independent subset. Similarly, we can
also assume that $\left\{ \ell_{i}\right\}
_{i=1}^{t_{2}}\cup\left\{ \ell_{i}^{g}\right\} _{i=1}^{r}$ are
linearly independent as otherwise we can find a representation for
a function in $[g]_{A}$ with a  smaller rank. Using the same
argument again we conclude that $\left\{ \ell_{i}\right\}
_{i=1}^{t_{2}}\cup\left\{ \ell_{i}^{g}\right\}
_{i=1}^{r}\cup\left\{ \ell_{i}^{h}\right\} _{i=1}^{r}$ are
linearly independent as well (by considering $[h]_{A'}$).

Since $g+h-f\in[0]_{A}$, we can express this polynomial as
$g+h-f=\sum_{i=1}^{t_{1}}\ell_{i}'Q_{i}+\sum_{i=1}^{t_{2}}\ell_{i}Q_{i}'+Q_{0}'$.
In other words:
\begin{eqnarray}\label{eq:g+h-f}
\nonumber \sum_{i=1}^{r}\ell_{i}^{g}Q_{i}^{g}
+\sum_{i=1}^{c}\ell_{i}^{g,(1)}\ell_{i}^{g,(2)}\ell_{i}^{g,(3)} +
\sum_{i=1}^{r}\ell_{i}^{h}Q_{i}^{h}
+ \sum_{i=1}^{c}\ell_{i}^{h,(1)}\ell_{i}^{h,(2)}\ell_{i}^{h,(3)} &-&\\
\left(\sum_{i=1}^{r}\ell_{i}^{f}Q_{i}^{f}
+\sum_{i=1}^{c}\ell_{i}^{f,(1)}\ell_{i}^{f,(2)}\ell_{i}^{f,(3)}
+\sum_{i=1}^{t_{1}}\ell_{i}'Q_{i}
+\sum_{i=1}^{t_{2}}\ell_{i}Q_{i}'+Q_{0}' \right)&=&0\;.
\end{eqnarray}
To ease notations, using the fact that $\left\{ \ell_{i}\right\}
_{i=1}^{t_{2}}\cup\left\{
\ell_{i}^{g}\right\}_{i=1}^{r}\cup\left\{
\ell_{i}^{h}\right\}_{i=1}^{r}$ are linearly independent, let us
assume w.l.o.g. that $\forall i$, $\ell_i^{g} = x_i$,
$\ell_i^{h}=x_{r+i}$ and $\ell_i = x_{2r+i}$. Thus,
Equation~\eqref{eq:g+h-f} becomes
\begin{eqnarray}\label{eq:g+h-f new} \nonumber
\sum_{i=1}^{r}x_{i}Q_{i}^{g}
+\sum_{i=1}^{c}\ell_{i}^{g,(1)}\ell_{i}^{g,(2)}\ell_{i}^{g,(3)} +
\sum_{i=1}^{r}x_{r+i}Q_{i}^{h}
+ \sum_{i=1}^{c}\ell_{i}^{h,(1)}\ell_{i}^{h,(2)}\ell_{i}^{h,(3)} &-&\\
\left(\sum_{i=1}^{r}\ell_{i}^{f}Q_{i}^{f}
+\sum_{i=1}^{c}\ell_{i}^{f,(1)}\ell_{i}^{f,(2)}\ell_{i}^{f,(3)}
+\sum_{i=1}^{t_{1}}\ell_{i}'Q_{i}
+\sum_{i=1}^{t_{2}}x_{2r+i}Q_{i}'+\tilde{Q}_{0} \right)&=&0\;,
\end{eqnarray}
where we remember that variables from $\{x_i\}_{i=1}^{2r+t_2}$ may
appear in the linear and quadratic functions in the expression.
Consider all terms involving $x_1$ (recall that $\ell_1^{g}=x_1$)
in Equation~\eqref{eq:g+h-f new}. Clearly they sum to zero, but
they can also be written as
\begin{eqnarray}\label{eq:g+h-f x1}\nonumber
0= Q_1^{g} + \sum_{i=1}^{r}x_{i}m_{i}^{g}
+\sum_{i=1}^{3c}\alpha_i^{g} m_{i}^{g,(1)}m_{i}^{g,(2)} +
\sum_{i=1}^{r}x_{r+i}m_{i}^{h} +
\sum_{i=1}^{3c}\alpha_i^{h}m_{i}^{h,(1)}m_{i}^{h,(2)} -\\
\left(\sum_{i=1}^{r}\beta_{i}^{f}Q_{i}^{f}+\sum_{i=1}^{r}\ell_i^{f}m_i^{f}
+\sum_{i=1}^{3c}{\beta'}_{i}^{f}m_{i}^{f,(1)}m_{i}^{f,(2)}
+\sum_{i=1}^{t_{1}}\beta_{i}'Q_{i} + \sum_{i=1}^{t_1}\ell_{i}'m_i
+\sum_{i=1}^{t_{2}}x_{2r+i}m_{i}'+m_{0} \right),
\end{eqnarray}
where the $m$-s are linear functions and the $\alpha$-s and
$\beta$-s are field elements. Rearranging terms we conclude that
\begin{equation}\label{eq:Q1g gives a contradiction}
\rank_2\left(Q_1^{g} -\sum_{i=1}^{r}\beta_{i}^{f}Q_{i}^{f} -
\sum_{i=1}^{t_{1}}\beta_{i}'Q_{i}
-\sum_{i=1}^{t_{2}}x_{2r+i}m_{i}'\right) \leq 3r + 9c +
t_1=c'-2c\;.
\end{equation}
This implies that
\begin{equation*}
\rank_2\left(\left[\sum_{i=1}^{r}\beta_{i}^{f}Q_{i}^{f}\right]_{A'}\right)\leq
c'-2c\;.
\end{equation*}
We now have two cases to consider. If
$(\beta_1^{f},\ldots,\beta_r^{f})$ are not all zero then, by
arguments described above, this implies that
$\rank^{c'-c}_{3}([f]_{A'})\leq r-1$. Recalling that
$[h+g]_{A'}=[f]_{A'}$ we get a contradiction. If, on the other
hand, $(\beta_1^{f},\ldots,\beta_r^{f})=0$ then
Equation~\eqref{eq:Q1g gives a contradiction} implies that
$\rank_2([Q_1^g]_A) \leq c'-2c$ and so $\rank_3^{c'-c}([g]_A)\leq
r-1$ in contradiction to the choice of $g$. Concluding, we have
that for every $f \in M$, $\rank_3^{c'}([f]_{A'})\leq r-1$ as
required.
\end{proof}

By applying Lemma~\ref{lem:low rank -> low rank 3c} $r$ times we
obtain the following corollary.

\begin{corollary}
\label{cor:rank c is zero}Let $M$ be a vector space of cubic
polynomials satisfying $\rank_{3}(f)\leq r$ for every $f\in M$.
Then there exists a set of quadratic and linear functions
$A=\left\{ Q_{i}\right\} _{i=1}^{r}\cup\left\{ \ell_{i}\right\}
_{i=1}^{r(r-1)/2}$, such that for $c=\exp(r)$,
$\rank^{c}_{3}([f]_{A})=0$ for every $f\in M$.
\end{corollary}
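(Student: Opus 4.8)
The plan is to prove the corollary by iterating Lemma~\ref{lem:low rank -> low rank 3c} exactly $r$ times, starting from the empty collection and tracking how the parameters evolve: the value $c$, the number $t_1$ of quadratics in $A$, the number $t_2$ of linear functions in $A$, and the rank bound we currently guarantee. For the base case take $A^{(0)}=\emptyset$, so $t_1^{(0)}=t_2^{(0)}=c^{(0)}=0$; since $[f]_{A^{(0)}}$ is just $f$ together with an arbitrary added quadratic, we have $\rank_3^{0}([f]_{A^{(0)}})\le\rank_3(f)\le r$ for every $f\in M$, which is exactly the hypothesis of Lemma~\ref{lem:low rank -> low rank 3c} with $(c,\text{rank bound},t_1)=(0,r,0)$.

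For the inductive step, suppose after $j$ applications we have a set $A^{(j)}$ consisting of $j$ quadratics and $N_j=r+(r-1)+\cdots+(r-j+1)$ linear functions, a parameter $c^{(j)}$, and the guarantee $\rank_3^{c^{(j)}}([f]_{A^{(j)}})\le r-j$ for all $f\in M$. If $r-j=0$ we stop. Otherwise we apply Lemma~\ref{lem:low rank -> low rank 3c} to $[M]_{A^{(j)}}$ with rank bound $r-j$ and $t_1=j$; it produces $r-j$ new linear functions and one new quadratic, and appending them to $A^{(j)}$ gives $A^{(j+1)}$ with $\rank_3^{c^{(j+1)}}([f]_{A^{(j+1)}})\le r-j-1$ for all $f\in M$, where $c^{(j+1)}=11c^{(j)}+3(r-j)+j\le 11c^{(j)}+3r$. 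Because $M$ itself never changes, the conclusion of the lemma is precisely the hypothesis required for the next invocation, so the chain closes and after $r$ steps we reach rank bound $0$.

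It then remains only to do the arithmetic and unwind the definitions. The appended quadratics number $r$ in total (the set $\{Q_i\}_{i=1}^{r}$), and the appended linear functions number $\sum_{k=1}^{r}k=O(r^2)$, which is the count asserted in the statement up to the exact constant. The recurrence $c^{(j+1)}\le 11c^{(j)}+3r$ with $c^{(0)}=0$ unrolls to $c^{(r)}\le 3r\cdot\frac{11^{r}-1}{10}=2^{O(r)}=\exp(r)$. Finally $\rank_3^{c^{(r)}}([f]_{A^{(r)}})=0$ unwinds, via the definitions of $[f]_A$ and of $\rank_3^{c}$, to say that $f$ equals a quadratic plus at most $c^{(r)}$ products of three linear functions plus an $A^{(r)}$-combination $\sum_i\ell_i'Q_i+\sum_i\ell_iQ_i'$, which is the assertion of the corollary. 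The only point needing attention is the italicized observation in the inductive step — verifying that the lemma's output is literally its next input, so the iteration is legitimate — together with confirming that $c$ grows only by a bounded multiplicative factor per round, so the final $c$ is exponential in $r$ rather than, say, a tower in $r$; everything substantive is already carried by Lemma~\ref{lem:low rank -> low rank 3c} itself.
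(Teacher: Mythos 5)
Your proof is correct and follows the same route as the paper, which simply iterates Lemma~\ref{lem:low rank -> low rank 3c} $r$ times; your bookkeeping for the recurrence $c^{(j+1)}\le 11c^{(j)}+3r$ and the resulting $\exp(r)$ bound is the right way to make the one-line proof precise. Your count of $\sum_{k=1}^{r}k=r(r+1)/2$ appended linear functions is in fact the accurate tally (the corollary's $r(r-1)/2$ appears to be a minor misprint), and both are of the claimed order $O(r^2)$.
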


We now have that every function in $M$, modulo some set $A$ of
linear and quadratic functions, can be expressed as
$\sum_{i=1}^{c}\ell_{i}^{(1)},\ell_{i}^{(2)},\ell_{i}^{(3)}$, for
some $c$. Next we show that we can add $3c$ additional linear
functions to $A$ such that modulo the new set every function
becomes zero. We again give an iterative procedure for finding
those linear functions.

Before proving this result we define the notion of
$\dim_3^c([f]_{A})$ that will serve as a potential function in our
argument (in a similar way to the role played by $\rank_3^c$).

\begin{definition}
Let $A$ be a set of quadratic and linear functions and $[f]_{A}$ a
class of cubic functions such that $\rank_3^c([f]_A)=0$. We define
the dimension of the class as follows:
\[
\dim_3^c([f]_A)=\min\left\{ \dim\left( \mathrm{span}\left\{
\ell_{i}^{(1)},\ell_{i}^{(2)},\ell_{i}^{(3)}\right\}
_{i=1}^{c}\right)\mid
\sum_{i=1}^{c}\ell_{i}^{(1)}\ell_{i}^{(2)}\ell_{i}^{(3)} \in
[f]_A\right\}\,.
\]
%As before we define,
%\[ \dim_3^c([f]_{A})=min\left\{ \dim_3^c(g)\mid g\in[f]_{A}\right\}\;.\]
\end{definition}

To better understand the reason for the definition we note that if
$\rank_3^c([f]_A)=0$ then
$\sum_{i=1}^{c}\ell_{i}^{(1)}\ell_{i}^{(2)}\ell_{i}^{(3)} + Q \in
[f]_A$ for some linear functions and quadratic $Q$. Thus, our goal
will be to find a small set of linear functions that,
simultaneously, form a basis to all those linear functions for all
$f \in M$. The next lemma shows that by joining $\left\{
\ell_{i}^{(1)},\ell_{i}^{(2)},\ell_{i}^{(3)}\right\} _{i=1}^{c}$
from some polynomial $f$, of maximal dimension in $[M]_{A}$, to
$A$, the dimension of every other element in $[M]_{A}$ decreases.

\begin{lemma}
\label{lem:low dim c implie lower dim}Let $A=\left\{ Q_{i}\right\}
_{i=1}^{t_{1}}\cup\left\{ \ell_{i}\right\} _{i=1}^{t_{2}}$ be a
set of linear and quadratic functions. Assume that the rank of any
nontrivial linear combination of $\left\{ Q_{i}\right\}
_{i=1}^{t_{1}}$ is greater than $9c+t_{1}+t_2$. Let $M$ be a
linear space of cubic polynomials such that for every
$[f]_{A}\in[M]_{A}$, $\rank^{c}_{3}([f]_{A})=0$ and
$\dim_3^c([f]_{A})\leq d$. Then, there are $d$ linear functions
$\left\{ \ell'_{i}\right\} _{i=1}^{d}$ such that for $A'\triangleq
A\cup\left\{ \ell'_{i}\right\} _{i=1}^{d}$,
$\dim_3^c([f]_{A'})\leq d-1$ for all $[f]_{A'}\in[M]_{A'}$.
\end{lemma}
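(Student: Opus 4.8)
The plan is to imitate the proofs of Lemma~\ref{lem:low rank space have few common functions} and Lemma~\ref{lem:low rank -> low rank 3c}: single out an element of $M$ whose representative modulo $A$ already uses up a $d$-dimensional space of linear functions, adjoin a basis of that space to $A$, and show that every class then drops in dimension. Concretely, let $g\in M$ maximize $\dim_3^c([g]_A)$ over $M$; by hypothesis this maximum is at most $d$, and if it is at most $d-1$ there is nothing to prove, so assume it equals $d$. Fix a representative $g\equiv\sum_{i=1}^{c}a_ib_ic_i\pmod A$ achieving $\dim\mathrm{span}\{a_i,b_i,c_i\}_{i=1}^{c}=d$ (so $d\le 3c$), let $\ell'_1,\dots,\ell'_d$ be a basis of this span, and set $A'\triangleq A\cup\{\ell'_1,\dots,\ell'_d\}$. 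Each monomial $a_ib_ic_i$ is a product of an element of $A'$ with a quadratic, so $g\in[0]_{A'}$, hence $[h]_{A'}=[h+g]_{A'}$ for every $h\in M$.

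First I record the easy half. If $f\in M$ and one of its minimal-dimension mod-$A$ representatives $f\equiv\sum\alpha_i\beta_i\gamma_i\pmod A$ has linear span meeting $\mathrm{span}\{\ell'_1,\dots,\ell'_d\}$ nontrivially, pick $0\neq w$ in the intersection, complete it to a basis of that span, re-expand each factor in this basis, and split off every monomial containing $w$; since $w\in A'$ those monomials are absorbed into $[0]_{A'}$, leaving a sum of cubic products whose linear span has dimension $\le\dim_3^c([f]_A)-1\le d-1$. So it suffices to exclude the existence of an $h\in M$ with $\dim_3^c([h]_{A'})=d$; for such an $h$ we have $\dim_3^c([h]_A)=d$ (it is $\le d$ by the choice of $g$ and $\ge\dim_3^c([h]_{A'})$), and by the previous sentence every minimal-dimension mod-$A$ representative of $h$ --- and likewise of $h+g$, since $\dim_3^c([h+g]_{A'})=\dim_3^c([h]_{A'})=d$ --- has linear span transverse to $\mathrm{span}\{\ell'_i\}$.

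Assume toward a contradiction that such an $h$ exists. Using $\dim_3^c([h+g]_A)\le d$, fix minimal-dimension mod-$A$ representatives $h\equiv\sum_i a_i^hb_i^hc_i^h$ and $h+g\equiv\sum_i a_i^+b_i^+c_i^+$, and subtract them from the fixed representative of $g$ to obtain a relation in $[0]_A$:
$$\sum_{i=1}^{c}a_ib_ic_i+\sum_{i=1}^{c}a_i^hb_i^hc_i^h-\sum_{i=1}^{c}a_i^+b_i^+c_i^+=\sum_{j=1}^{t_1}\lambda_jQ_j+\sum_{j=1}^{t_2}\ell_jR_j+R_0,$$
with the $\lambda_j$ linear and the $R_j$ quadratic. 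Exactly as in Lemma~\ref{lem:low rank -> low rank 3c}, normalize the linear functions occurring in the three cubic sums together with the $\ell_j$ to be ``as independent as possible'' and rename them to fresh coordinates, keeping a basis $x_1,\dots,x_d$ of $\mathrm{span}\{\ell'_i\}$ among them; transversality removes $x_1$ from the $h$-sum. Collecting the coefficient of $x_1$ (handling $x_1^2=x_1$ in characteristic two as in that lemma) yields an identity $P=\sum_{j=1}^{t_1}\mu_jQ_j+E$, where $P$ is a quadratic of rank $\le 2d$ built from the $g$- and $(h+g)$-sums, $\rank_2(E)=O(t_1+t_2)$, and the $\mu_j$ are scalars. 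Then $\rank_2\!\bigl(\sum_j\mu_jQ_j\bigr)\le 2d+O(t_1+t_2)\le 9c+t_1+t_2$, so the rank hypothesis on $\{Q_i\}$ forces $\mu_j=0$ for all $j$, whence $\rank_2(P)=O(t_1+t_2)$. Feeding this back and running the two-case analysis of Lemma~\ref{lem:low rank -> low rank 3c} (the case split being on which cubic sum the forced low-rank coefficient comes from) produces either a mod-$A'$ representative of $h$ with strictly smaller linear span --- contradicting $\dim_3^c([h]_{A'})=d$ --- or a mod-$A$ representative of $g$ with strictly smaller linear span, contradicting the minimality in the choice of $g$. Either way we have a contradiction, so no such $h$ exists, and hence $\dim_3^c([f]_{A'})\le d-1$ for all $f\in M$ while $A'$ adds only the $d$ linear functions $\ell'_1,\dots,\ell'_d$ to $A$.

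The main obstacle is the third paragraph: carrying out the independence normalization of all the linear functions present, performing the $x_1$-coefficient extraction cleanly (this is exactly where $\chr(\F)=2$ forces the $x_1^2=x_1$ care visible in Lemma~\ref{lem:low rank -> low rank 3c}), bounding sharply how far the ranks of the $Q_i$'s can drop under the extraction, and converting ``the extracted coefficient has low rank'' into an honest span reduction of $h$ or of $g$. The rank hypothesis on $\{Q_i\}$ enters exactly once --- to annihilate the $\mu_j$'s --- and the constant $9c+t_1+t_2$ is tuned so that the error terms accumulated during the extraction fit beneath it.
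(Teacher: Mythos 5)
Your setup matches the paper's through the formation of the relation $g+h-f\in[0]_{A}$: pick $g$ with maximal $\dim_3^c([g]_{A})=d$, adjoin a basis $\{\ell'_i\}_{i=1}^{d}$ of its linear span to form $A'$, note that any mod-$A$ representative whose span meets $\mathrm{span}\{\ell'_i\}$ nontrivially drops in dimension, and suppose toward contradiction that some $h$ still has $\dim_3^c([h]_{A'})=d$. Up to this point the two proofs agree, and your first reduction (using $w\in\mathrm{span}\{\ell'_i\}$ to absorb monomials into $[0]_{A'}$) is sound.

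The gap is in the third paragraph, where you try to transplant the $x_1$-coefficient extraction of Lemma~\ref{lem:low rank -> low rank 3c}. That lemma's two-case argument works because the quantity being contradicted is $\rank_3^c$, and there is a specific quadratic $Q_1^g$ in the representation of $g$ whose rank mod $A'$ is known a priori to be large ($>c'-c$); extracting the $x_1$-coefficient bounds that rank from above, giving the contradiction. Here the invariant is $\dim_3^c$, and the representatives are sums $\sum a_ib_ic_i$ of products of \emph{linear} functions; the $x_1$-coefficient $P$ you extract is therefore a quadratic built entirely from a $d$-dimensional span, hence already of rank $\le 2d$ before any argument is made. After killing the $\mu_j$'s you obtain $\rank_2(P)=O(t_1+t_2)$, but nothing in your proof explains how this rank bound on a single coefficient converts into a \emph{dimension} reduction of the span of $g$ or of $h$; the claim that ``running the two-case analysis produces a mod-$A'$ representative with strictly smaller linear span'' is asserted, not derived, and I do not see how to derive it — low rank of the $x_1$-coefficient is compatible with the span staying $d$-dimensional.

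The paper's proof is structurally different at exactly this point. It does not extract the coefficient of a basis vector of $\mathrm{span}\{\ell'_i\}$. Instead it writes $g+h-f=\sum_{i=1}^{t_1}Q_i\ell_i'+Q'$ and shows that \emph{all} the multipliers $\ell_i'$ are identically zero: if some $\ell_i'$ depended on a variable $x$, the $x$-coefficient $F$ of $g+h-f$ would have rank $>9c$ (from the $\sum\alpha_iQ_i$ side, with some $\alpha_i\neq 0$) and simultaneously rank $\le 9c$ (since the three cubic sums involve at most $9c$ linear functions), a contradiction. This yields $g+h-f=Q$, a pure quadratic, and then the argument changes tack entirely: writing $g=g(y)$, $h=h(z)$, $f=F(y+z)$ for a single polynomial $F:\F^d\to\F$, the identity $Q(y,z)+F(y+z)=g(y)+h(z)$ forces the Fréchet-type relation $F(\alpha+\beta)-F(\alpha)-F(\beta)=$ quadratic for all $\alpha,\beta$, which forces $\deg(F)\le 2$, whence $[h]_{A}=[F(z)]_{A}=[0]_{A}$, contradicting $\dim_3^c([h]_A)=d$. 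This additivity argument is the genuinely new ingredient Lemma~\ref{lem:low dim c implie lower dim} needs and is not a corollary of the extraction technique of Lemma~\ref{lem:low rank -> low rank 3c}; you would need to supply it (or an equivalent) to complete the proof.
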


The proof is very similar in nature to the proof of
Lemma~\ref{lem:low rank -> low rank 3c}.

\begin{proof}

We start by passing to the subspace $V= \{x \mid
\ell_1(x)=\ldots=\ell_{t_2}(x)=0\}$. When restricting the $Q_i$-s
to $V$ the rank of every linear combination can drop by at most
$t_2$ so it is still at least $9c+t_1$. From now on we shall work
over $V$. Note that if we prove the theorem over $V$ then it
clearly holds over $\F^n$ as well.

Let $[g]_{A}\in[M]_{A}$ be a class satisfying
$\dim_3^c([g]_{A})=d$. By definition we can assume that $g$ is
such that
$g=\sum_{i=1}^{c}\ell_{i}^{g,(1)}\ell_{i}^{g,(2)}\ell_{i}^{g,(3)}$,
and that for some $d$ linearly independent linear functions
$\left\{ \ell_{i}^{g}\right\} _{i=1}^{d}$ it holds that
$\left\{\ell_{i}^{g,(1)},\ell_{i}^{g,(2)},\ell_{i}^{g,(3)}\right\}
_{i=1}^{c}\subseteq\mathrm{span}\left\{ \ell_{i}^{g}\right\}
_{i=1}^{d}$. Set $A'=A\cup\left\{ \ell_{i}^{g}\right\}
_{i=1}^{d}$. We will show that for every $f \in M$ it holds that
$\dim_3^c([f]_{A'})\leq d-1$.

Assume for contradiction that there is some $[h]_{A'}\in[M]_{A'}$
such that $\dim_3^c([h]_{A'})=d$. Clearly, $\dim_3^c([h]_{A})=d$
as well. W.l.o.g. let
$h=\sum_{i=1}^{c}\ell_{i}^{h,(1)}\ell_{i}^{h,(2)}\ell_{i}^{h,(3)}$.
We also denote with $\left\{ \ell_{i}^{h}\right\} _{i=1}^{d}$ a
basis for
$\left\{\ell_{i}^{h,(1)},\ell_{i}^{h,(2)},\ell_{i}^{h,(3)}\right\}
_{i=1}^{c}$. As $\dim_3^c([h]_{A})$ does not decreases modulo
$\left\{ \ell_{i}^{g}\right\} _{i=1}^{d}$, it follows that
$\left\{ \ell_{i}^{g}\right\} _{i=1}^{d}\cup\left\{
\ell_{i}^{h}\right\} _{i=1}^{d}$ are linearly independent. By
definition of $A'$ we have that
$\dim_3^c([g+h]_{A'})=\dim_3^c([h]_{A'})=d$. Let $f\in[g+h]_{A}$
be such that
$f=\sum_{i=1}^{c}\ell_{i}^{f,(1)}\ell_{i}^{f,(2)}\ell_{i}^{f,(3)}$
and $\dim(\mathrm{span}\{\ell_i^{f,(j)}\})=d$. Since
$g+h-f\in[0]_{A}$ we have that $g+h-f =
\sum_{i=1}^{t_{1}}Q_{i}\ell_{i}' + Q'$. We now show that all the
$\ell'_i$-s are zero. Assume for contradiction that this is not
the case. Namely, $\left\{ \ell_{i}'\right\} _{i=1}^{t_{1}}$ are
not all zero. In particular, some $\ell'_{i}$ depends on some
variable $x$. Write $g+h-f = xF+H$ where $H$ does not depend on
$x$. We now estimate $\rank_2(F)$. On the one hand $F$ can be
expressed as
$\sum_{i=1}^{t_{1}}\alpha_{i}Q_{i}+\sum_{i=1}^{t_{1}}m_{i}\ell_{i}'+m_0$
for some coefficients $\left\{ \alpha_{i}\right\} _{i=1}^{t_{1}}$
(not all of them are zero) and some linear functions $\left\{
m_{i}\right\}_{i=0}^{t_{1}}$. Hence, $\rank_2(F)$ is larger than
$9c$ (remember that
$\rank_2(\sum_{i=1}^{t_{1}}\alpha_{i}Q_{i})>9c+t_{1}$ on $V$). On
the other hand, $g+h-f$ is equal to
$$g+h-f=\sum_{i=1}^{c}\ell_{i}^{g,(1)}\ell_{i}^{g,(2)}\ell_{i}^{g,(3)}+
\sum_{i=1}^{c}\ell_{i}^{h,(1)}\ell_{i}^{h,(2)}\ell_{i}^{h,(3)}-
\sum_{i=1}^{c}\ell_{i}^{f,(1)}\ell_{i}^{f,(2)}\ell_{i}^{f,(3)}\;,$$
so $F$ can be expressed as
$\sum_{i=1}^{9c}\hat{m}_{i}\tilde{m}_{i} + \ell$, i.e it's rank is
at most $9c$, in contradiction. It follows that $g+h-f=Q$, for
some quadratic $Q$.  Thus,
\begin{equation}\label{eq:g+h=f+Q}\sum_{i=1}^{c}\ell_{i}^{g,(1)}\ell_{i}^{g,(2)}\ell_{i}^{g,(3)}+
\sum_{i=1}^{c}\ell_{i}^{h,(1)}\ell_{i}^{h,(2)}\ell_{i}^{h,(3)}=
\sum_{i=1}^{c}\ell_{i}^{f,(1)}\ell_{i}^{f,(2)}\ell_{i}^{f,(3)}
+Q\;.
\end{equation}
For simplicity, assume w.l.o.g. that for $i=1\ldots d$, $\ell_i^g
= y_i$, $\ell_i^h = z_i$. We would like to show that if
Equation~\eqref{eq:g+h=f+Q} holds then $\deg(h)=2$ in
contradiction to the choice of $h$. To further simplify notations
we assume w.l.o.g. that the $\ell_i^f$-s are linear functions in
the variables $y_1\ldots,y_d,z_1,\ldots,z_d$ (as we can set all
other variables to zero and still obtain a similar equality). In
particular, every $\ell_{i}^{f}$ can be expressed as $\ell_i^f =
\ell_{i}^{f,g}(y)+\ell_{i}^{f,h}(z)$. Hence,
Equation~\eqref{eq:g+h=f+Q} can be rewritten as $Q(y,z)+f(y,z) =
g(y)+h(z)$. Therefore, it holds that $g(y)=f(y,0)+Q(y,0)$ and
$h(y)=f(0,z)+Q(0,z)$.\footnote{We can assume w.l.o.g. that
$\ell_i^g$ and $\ell_i^h$ do not have a constant term.} In
particular, there is some representation of $g$ and $h$ as sums of
products of linear functions such that $\left\{
\ell_{i}^{f,g}(y)\right\} $ and $\left\{ \ell_{i}^{f,h}(z)\right\}
$ are their basis, respectively. By applying an invertible linear
transformation we can further assume that $\ell_{i}^{f,g}(y)=y_i$
and $\ell_{i}^{f,h}(z)=z_i$. Thus, the basis for
$\{\ell_i^{f,(j)}\}$ is $\ell_1^f = y_1+z_1,\ldots,\ell_d^f =
y_d+z_d$. As a consequence we have that $f =
\sum_{i=1}^{c}\ell_{i}^{f,(1)}(y+z)\ell_{i}^{f,(2)}(y+z)\ell_{i}^{f,(3)}(y+z)$.

Define $F:\mathbb{F}^{d}\to\mathbb{F}$ as
${F}(u)=\sum_{i=1}^{c}\ell_{i}^{f,(1)}(u)\ell_{i}^{f,(2)}(u)\ell_{i}^{f,(3)}(u)$.
Hence, $f={F}(y+z)$, $g=f(y,0)+Q(y,0)={F}(y)+Q'(y)$ and
$h={F}(z)+Q''(z)$. Thus, for every $\alpha,\beta\in\mathbb{F}^{d}$
${F}(\alpha+\beta)={F}(\alpha)+{F}(\beta)+\tilde{Q}(\alpha,\beta)$.
It is not difficult to check that if $F$ is a polynomial such that
$\deg\left(F(\alpha+\beta)-F(\alpha)-F(\beta)\right)\leq 2$ then
$\deg(F)\leq 2$. Therefore, $[h]_{A}=[{F}(z)]_{A}=[0]_{A}$
(because ${F}$ is quadratic), in contrary to the fact that
$\dim_3^c([h]_{A})=d$. We thus deduce that for every
$[h]_{A'}\in[M]_{A'}$, $\dim_3^c([h]_{A'})< d$ as required.
\end{proof}

Combining Lemma~\ref{lem:low rank -> low rank 3c} and
Lemma~\ref{lem:low dim c implie lower dim} we are now able to
prove Lemma~\ref{lem:low rank few com fun deg 4}.

\ignore{
\begin{corollary}\label{cor:basis for U4}
Let $M$ be a vector space of cubic polynomials satisfying
$\rank_{3}(f)\leq r$ for all $f\in M$. Then there are $t_1 \leq r$
quadratic functions $Q_1,\ldots,Q_{t_1}$ and $t_2 = exp(r)$ linear
functions $\ell_1,\ldots,\ell_{t_2}$ such that every $f \in M$ can
be written as $f = \sum_{i=1}^{t_2}Q_i \cdot \ell_i^f +
\sum_{i=1}^{t_2}\ell_i \cdot Q_i^f + Q_0^f$, where superscript $f$
means a (linear or quadratic) function that depends on $f$.
\end{corollary}}

\begin{proof}[Proof of Lemma~\ref{lem:low rank few com fun deg 4}]
Corollary~\ref{cor:rank c is zero} implies that there exists a set
of quadratic and linear functions $A=\left\{ Q_{i}\right\}
_{i=1}^{r}\cup\left\{ \ell_{i}\right\} _{i=1}^{r(r-1)/2}$, such
that for $c=\exp(r)$, $\rank^{c}_{3}([f]_{A})=0$ for every $f\in
M$. By Lemma~\ref{lem:regularizing a set of quadratics} we can
assume w.l.o.g. that every nontrivial linear combination of the
$Q_i$'s have rank larger than $10c$ (possibly after passing to a
subspace $V$ of dimension at least $n-\poly(c)=n-\exp(r)$ and
throwing some of the $Q_i$-s (without changing the property of
$[M]_{A}$)). By applying Lemma~\ref{lem:low dim c implie lower
dim} $d=3c$ times we get a set $A' = \{Q'_i\}_{i=1}^{t_1} \cup
\{\ell'_i\}_{i=1}^{t_2}$, for $t_1\leq r$ and $t_2 = \exp(r)$,
such that $\dim_3^c([f]_{A'})=0$ for every $[f]_{A'} \in
[M]_{A'}$. In particular, every $f \in M$ can be represented as
$f=\sum_{i=1}^{t_{1}}\ell_{i}^{f}Q'_{i}+\sum_{i=1}^{t_{2}}\ell'_{i}Q_{i}^{f}+Q_{0}^{f}$
for some linear and quadratic functions $\left\{
\ell_{i}^{f}\right\} _{i=1}^{t_{1}}\cup\left\{ Q_{i}^{f}\right\}
_{i=0}^{t_{2}}$ depending on $f$.
\end{proof}

\subsection{Completing the proof}

We can now complete the proof of
Theorem~\ref{thm:intro:deg-4:gowers}. We first give a lemma
summarizing what we have achieved so far.

\begin{lemma}\label{lem:basis for U4}
Let $f$ be a degree four polynomial with $\|f\|_{U^4}=\delta$.
Then for $r=O(\log^2(1/\delta))$ there exist a subspace $V$,
satisfying $\dim(V) \geq n - O(\log(1/\delta))$, $r$ quadratic
polynomials $Q_1,\ldots,Q_r$ and $R=\exp(r)$ linear functions
$\ell_1,\ldots,\ell_R$ such that for every $y \in V$ we have that
$\Delta_y(f|_V)= \sum_{i=1}^{r}Q_i \cdot \ell_i^y +
\sum_{i=1}^{R}\ell_i \cdot Q_i^y + Q_0^y$.
\end{lemma}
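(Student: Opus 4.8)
Here I sketch the proof; it is essentially an assembly of results already established in the preceding sections.

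The plan is to reduce the statement to the Main Lemma (Lemma~\ref{lem:low rank few com fun deg 4}) by first exhibiting a large subspace on which every partial derivative of $f$ is a low-rank cubic polynomial, and then applying the Main Lemma to the linear space spanned by these derivatives. First I would pass from $f$ to its derivatives: since $\|f\|_{U^4}=\delta$, Lemma~\ref{lem:gowers norm of der} produces a set $S\subseteq\F^n$ with $|S|\geq\frac12\delta^{16}\cdot|\F^n|$ such that $\|\Delta_y(f)\|_{U^3}\geq\frac12\delta^2$ for every $y\in S$. Each such $\Delta_y(f)$ is a cubic polynomial of high $U^3$ norm, so Theorem~\ref{thm:intro:deg-3:gowers} applies to it and gives $\rank_3(\Delta_y(f))=O(\log^2(1/\delta))$ for all $y\in S$.

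Next I would regularize the direction. Set $\mathcal F(y)\eqdef\rank_3(\Delta_y(f))$. Exactly as in Lemma~\ref{lem:subadditivity of derivatives} — using the identity $\Delta_y(f)+\Delta_z(f)=\Delta_{y+z}(f)-\Delta_y\Delta_z(f)$, the fact that $\Delta_y\Delta_z(f)$ has degree at most two, and the fact that adding a quadratic to a cubic does not affect its $\rank_3$ (Definition~\ref{def:cubic rank}) — the function $\mathcal F$ is subadditive. Applying Lemma~\ref{lem:subadditive} with $\mu=\frac12\delta^{16}$ yields a subspace $V\subseteq\F^n$ of co-dimension $O(\log(1/\mu))=O(\log(1/\delta))$ on which $\mathcal F(y)\leq r$ for every $y\in V$, with $r=O(\log^2(1/\delta))$ after tracing the bound of Lemma~\ref{lem:subadditive} through the estimate on $S$. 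Since $\Delta_y(f|_V)=(\Delta_y f)|_V$ for $y\in V$ and restriction to a subspace can only lower the rank, we obtain $\rank_3(\Delta_y(f|_V))\leq r$ for all $y\in V$.

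Finally I would invoke the Main Lemma. Working inside $V$ (identified with $\F^{\dim V}$), let $M\eqdef\mathrm{span}\{\,\Delta_y(f|_V)\mid y\in V\,\}$. This is a linear space of cubic polynomials: modulo degree-two polynomials, $\alpha\,\Delta_y(f|_V)+\Delta_z(f|_V)$ equals $\Delta_{\alpha y+z}(f|_V)$, and $\rank_3$ is insensitive to adding quadratics, so every element of $M$ has $\rank_3\leq r$. Lemma~\ref{lem:low rank few com fun deg 4} then supplies quadratic functions $Q_1,\ldots,Q_{t_1}$ and linear functions $\ell_1,\ldots,\ell_{t_2}$ on $V$, with $t_1\leq r$ and $t_2=2^{O(r)}$, so that every $g\in M$ — in particular every $\Delta_y(f|_V)$ — can be written as $\sum_{i=1}^{t_1}\ell_i^{g}Q_i+\sum_{i=1}^{t_2}\ell_i Q_i^{g}+Q_0^{g}$. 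Renaming $t_1$ as $r$ and $t_2$ as $R=\exp(r)$, and setting $\ell_i^y\eqdef\ell_i^{\Delta_y(f|_V)}$, $Q_i^y\eqdef Q_i^{\Delta_y(f|_V)}$, this is exactly the asserted representation $\Delta_y(f|_V)=\sum_{i=1}^{r}Q_i\cdot\ell_i^y+\sum_{i=1}^{R}\ell_i\cdot Q_i^y+Q_0^y$.

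The genuinely hard part is not in this lemma: it is the Main Lemma itself (Lemma~\ref{lem:low rank few com fun deg 4}), which has already been proven. Here the only points requiring care are (i) checking that the collection of restricted derivatives really forms a linear space of cubic polynomials of bounded $\rank_3$ — which is precisely where one uses that $\rank_3$ ignores additive lower-degree terms — and (ii) the bookkeeping of the polylogarithmic parameters as one passes through Lemmas~\ref{lem:gowers norm of der}, \ref{lem:subadditive} and \ref{lem:low rank few com fun deg 4}.
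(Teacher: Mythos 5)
Your proposal is correct and follows essentially the same route as the paper: pass to a subspace of small codimension on which every derivative has low $\rank_3$ (via Lemma~\ref{lem:gowers norm of der}, Theorem~\ref{thm:intro:deg-3:gowers}, the subadditivity of $y\mapsto\rank_3(\Delta_y(f))$, and Lemma~\ref{lem:subadditive}), then apply Lemma~\ref{lem:low rank few com fun deg 4} to the linear span of the restricted derivatives, using the observation that $\alpha\Delta_y+\Delta_z-\Delta_{\alpha y+z}$ has degree two so closing the span only introduces quadratics and does not raise $\rank_3$. The only minor point — which the paper shares — is that a literal trace through Lemma~\ref{lem:subadditive} with density $\mu=\Theta(\delta^{16})$ and initial rank bound $O(\log^2(1/\delta))$ gives $r=O(\log^3(1/\delta))$ rather than $O(\log^2(1/\delta))$, but this does not affect the structure of the argument.
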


\begin{proof}
Let $f$ be a quartic function such that $||f||_{U^{4}}>\delta$. By
Lemma~\ref{lem:gowers norm of der},
Theorem~\ref{thm:intro:deg-3:gowers} and
Lemma~\ref{lem:subadditive} there is a subspace $V$, satisfying
$\dim(V) \geq n - O(\log(1/\delta))$, such that every partial
derivative of $f|_{V}$ is a cubic polynomial of rank at most $r=
O(\log^2(1/\delta))$. Let $f'=f|_V$. Lemma~\ref{lem:low rank few
com fun deg 4} gives a set $A = \{Q_i\}_{i=1}^{r} \cup
\{\ell_i\}_{i=1}^{\exp(r)}$ such that every $\Delta_y(f')$ can be
written as $\Delta_y(f')= \sum_{i=1}^{r}Q_i \cdot \ell_i^y +
\sum_{i=1}^{\exp(r)}\ell_i \cdot Q_i^y + Q_0^y$. Notice that the
lemma concerns a linear space of cubic polynomials. In our case
the linear space will be the span of all the partial derivatives
of $f'$. As for every $y,z\in V$ it holds that
$\deg\left(\Delta_y(f')+\Delta_z(f')-\Delta_{y+z}(f')\right)=2$,
we see that in order to `close' the space we only need to add
quadratic polynomials and so the assumption about the rank of the
cubic polynomials in the space does not change.
%The result holds for $f'=f|_V$, however by adding $\mathrm{co-dim}(V)$ many linear
%functions to $A$ (those linear functions defining $V$) the result follows for $f$ itself.
\end{proof}

\begin{proof}[Proof of Theorem~\ref{thm:intro:deg-4:gowers}]
By Lemma~\ref{lem:basis for U4} for $r=O(\log^2(1/\delta))$ there
exist $r$ quadratics $Q_1,\ldots,Q_r$ and $R=\exp(r)$ linear
functions $\ell_1,\ldots,\ell_R$ such that for every $y \in V$ we
have that $\Delta_y(f|_V)= \sum_{i=1}^{r}Q_i \cdot \ell_i^y +
\sum_{i=1}^{R}\ell_i \cdot Q_i^y + Q_0^y$.

We now wish to express each $Q_i$ in the form of
Theorem~\ref{thm:Dickson}. We have two cases. Assume first that
$\F=\F_2$. Then for every $1\leq i \leq r$ we have that $Q_i =
\sum_{i=1}^{n/2} \ell_{i,j}\cdot \ell'_{i,j} + \ell_{i,0}$. For
$\alpha \in \F^{R}$  let $V_{\alpha} = \{x \in V \mid \forall 1
\leq i \leq R,\; \ell_i(x) = \alpha_i\}$. Clearly, $\dim(V_\alpha)
\geq \dim(V)-R$. Let $f_{\alpha} = f|_{V_\alpha}$. Then for every
$y\in V_\alpha$, $\Delta_y(f_\alpha) = \sum_{i=1}^{r} Q_i|_V \cdot
{\ell'_i}^y + {Q'}_0^y$. We now repeat the following process for
each $1\leq i \leq r$. Assume that we are working over a subspace
$V_{\alpha,\beta^1,\ldots,\beta^{i-1}}$, of dimension $d_{i-1}
=\dim\left(V_{\alpha,\beta^1,\ldots,\beta^{i-1}}\right)$. Consider
$Q_{i}|_{V_{\alpha,\beta^1,\ldots,\beta^{i-1}}}$. By
Theorem~\ref{thm:Dickson} we can write
$Q_i|_{V_{\alpha,\beta^1,\ldots,\beta^{i-1}}} =
\sum_{i=1}^{d_{i-1}/2} \ell_{i,j}\cdot \ell'_{i,j} + \ell_{i,0}$.
For $\beta^i\in \F^{d_{i-1}/2}$ define
$V_{\alpha,\beta^1,\ldots,\beta^i} = \left\{x\in
V_{\alpha,\beta^1,\ldots,\beta^{i-1}} \mid \forall 1 \leq j \leq
d_{i-1}/2,\; \ell_{i,j} = (\beta^i)_j\right\}$.  Note that
$\cup_{\beta^i \in
\F^{d_{i-1}/2}}V_{\alpha,\beta^1,\ldots,\beta^i} =
V_{\alpha,\beta^1,\ldots,\beta^{i-1}}$. Thus, the set
$\{V_{\alpha,\beta^1,\ldots,\beta^r}\}$ forms a partition of $V$.
Moreover, observe that for every $\alpha,\beta^1,\ldots,\beta^i$,
$\deg\left(Q_i|_{V_{\alpha,\beta^1,\ldots,\beta^i}}\right) \leq
1$. Thus, for every $\alpha,\beta^1,\ldots,\beta^r$, all the
partial derivatives of $f|_{V_{\alpha,\beta^1,\ldots,\beta^r}}$
are of degree two and so
$\deg\left(f|_{V_{\alpha,\beta_1,\ldots,\beta_r}}\right)\leq 3$ as
claimed. To finish the proof we note that
$\dim\left({V_{\alpha,\beta^1,\ldots,\beta^i}}\right) \geq
\dim\left({V_{\alpha,\beta^1,\ldots,\beta^{i-1}}}\right)/2$.
Therefore, $\dim\left({V_{\alpha,\beta^1,\ldots,\beta^r}}\right)
\geq (n-R))/2^r = n/\exp(\log^2(1/\delta))$.

When $\chr(\F)=p>2$ we have the representation
$Q_i|_{V_{\alpha,\beta^1,\ldots,\beta^{i-1}}} =
\sum_{i=1}^{d_{i-1}} \ell_{i,j}^2 + \ell_{i,0}$. Rewriting we
obtain  \begin{eqnarray*}
Q_i|_{V_{\alpha,\beta^1,\ldots,\beta^{i-1}}}
&=&\sum_{i=1}^{r}\ell_{i,j}^{2}+\ell_{0} \\
&=&\sum_{i=1}^{d_{i-1}/p}\sum_{j=0}^{p-1}\ell_{pi+j}^{2}+\ell_{0}\\
&=&\sum_{i=1}^{d_{i-1}/p}\left(\sum_{j=1}^{p-1}\left(\ell_{pi+j}-\ell_{pi}\right)^{2}+2\ell_{pi}
\sum_{j=1}^{p-1}\left(\ell_{pi+j}-\ell_{pi}\right)\right)+\ell_{0}\end{eqnarray*}
Observe that after fixing $\forall1\leq j\leq p-1,\;
\ell_{pi+j}-\ell_{pi} = (\beta^i)_j$,
$Q_i|_{V_{\alpha,\beta^1,\ldots,\beta^{i-1}}}$ becomes linear.
Thus, the same argument as before gives the required result here
as well.
\end{proof}

Combining the idea of the above proof with the notion of disjoint
polynomials we prove Theorem~\ref{thm:intro:deg-4:gowers:high
char}.

\begin{proof}[Proof Sketch of Theorem~\ref{thm:intro:deg-4:gowers:high char}]
As in the proof of Theorem~\ref{thm:intro:deg-4:gowers} we obtain
linear $\{\ell_i\}_{i=1\ldots R}$ and quadratic
$\{q_i\}_{i=1\ldots r}$, where $r = O(\log^2(1/\delta))$ and
$R=\exp(r)$, that form a `basis' to the set of partial
derivatives. By passing to a subspace of codimension $R$ and using
Lemma~\ref{lem:making disjoint} we can assume w.l.o.g. that the
$q_i$-s are disjoint and that every partial derivative has the
form $\Delta_y(f)=\sum_{i=1}^{r}q_i \cdot \ell_i^{(y)} +
q_0^{(y)}$. As $\mathrm{char}({\F})>4$ we can assume w.l.o.g. that
$q_i = x_i^2 + q'_i$ and that $x_j$ can appear in $q'_i$ only as a
linear term. We now subtract from $f$ terms of the form $\alpha
q_i q_j$ such that in the resulting polynomial $f'$ there will be
no monomial of the form $x_i^2 x_j^2$ for $i \leq j$. Note that
$f'$ also has the property that for every $y$,
$\Delta_y(f')=\sum_{i=1}^{r}q_i \cdot {\ell'}_i^{(y)} +
{q'_0}^{(y)}$. We now show that degree four monomials in $f'$ may
only contain $x_i$ or $x_i^3$ but not $x_i^2$, for $i\in[r]$.
Indeed, assume for a contrary that $x_i^2$ appears in a degree
four monomial. Then, $x_i$ appears in $\Delta_{x_i}(f')$ in a
degree three monomial. This monomial comes from some
$\ell^{(x_i)}_j q_j$ for $j \neq i$. Therefore, we also have the
term $x_i x_j^2$ in $\Delta_{x_i}(f')$ (it is not difficult to see
that this term cannot be cancelled by any other $\ell^{(x_i)}_k
q_k$). As $\mathrm{char}(\F)>4$, integration w.r.t. $x_i$ gives
that  the term $x_i^2 x_j^2$ appears in $f'$ in contradiction. We
can thus write $f' = \sum_{i=1}^{r}x_i^3 \tilde\ell_i + f''$,
where in $f''$ each $x_i$ has degree at most one. Consider any $y$
`orthogonal' to
$\{x_1,\ldots,x_r,\tilde\ell_1,\ldots,\tilde\ell_r\}$ (namely,
substituting $y$ in any of those linear functions gives zero).
Then for each $i$, $\Delta_y(x_i^3 \tilde\ell_i)=0$. Hence, $x_i$
is the highest power of $x_i$ appearing in $\Delta_y(f')$. As the
$q_i$-s are disjoint and $\Delta_y(f')=\sum_{i=1}^{r}q_i \cdot
{\ell'}_i^{(y)} + {q'}_0^{(y)}$ we obtain that it must be the case
that $\deg(\Delta_y(f'))\leq 2$. Thus, $f'$ can be rewritten as a
polynomial in at most $2r$ variables plus a degree three
polynomial. Therefore, possibly after a change of basis we can
write $f = \sum_{i\leq j}\alpha_{i,j}q_i \cdot q_j +
\sum_{i=1}^{2r+R} y_i \cdot g_i + g_0$ as needed.
\end{proof}

\section{Conclusions}

In this paper we gave strong structural results for degree three
and four polynomials that have a high bias. It is a very
interesting question whether such a structure exists for higher
degree biased polynomials. Green and Tao \cite{GreenTao07} proved
such a result when $\deg(f)<|\F|$ (with much worse parameters for
degrees three and four), so this question is mainly open for small
fields. Another interesting question is improving the parameters
in the results of \cite{GreenTao07,KaufmanLovett08}. There it was
shown that when $\deg(f)=d$ and $f$ is biased then
$f=F(g_1,\ldots,g_{c_d})$, where $\deg(g_i)<\deg(f)$. However, the
dependence of $c_d$ on the degree $d$ and the bias $\delta$ is
terrible. Basically, $c_3 = \exp(\poly(1/\delta)$ and $c_d$ is a
tower of height $c_{d-1}$. In contrast, our results give that $c_3
= \log^2(1/\delta)$ and $c_4 = \poly(1/\delta)$. Thus, it is an
intriguing question to find the true dependence of $c_d$ on
$\delta$. In particular, as far as we know, it may be the case
that $c_d$ is polynomial in $1/\delta$ (where the exponent may
depend on $d$), or even $\poly(\log(1/\delta))$.

For the case of degree four polynomials with high $U^4$ norm we
proved an inverse theorem showing that on many subspaces, of
dimension $\Omega(n)$, $f$ equals to a degree three polynomial (a
different polynomial for each subspace). Such a result seems
unlikely to be true for higher degrees. However, it may be the
case that if $\deg(f)=d$ and $f$ has a high $U^d$ norm then $f$ is
correlated with a lower degree polynomial on a high dimensional
subspace.

\section*{Acknowledgements}

The authors would like to thank Shachar Lovett, Partha
Mukhopadhyay and Alex Samorodnitsky for helpful discussions at
various stages of this work. We are especially grateful to Shachar
and Partha for many helpful comments on an earlier version of this
paper. E.H would like to thank Noga Zewi for many helpful
conversations and for her support. Finally, we thank Swastik
Kopparty, Shubhangi Saraf and Madhu Sudan for pointing out an
error in an earlier proof of Lemma~\ref{lem:low rank space have
few common functions}.

%\bibliographystyle{alpha}
%\bibliography{../../bibliography}

\newcommand{\etalchar}[1]{$^{#1}$}

\end{document}